\def\dist{{d}}
\def\RR{{\mathbb R}}
\long\def\comment#1\endcomment{}
\title[Weak Morse and combings]{Weak Morse property in spaces with bounded combings}
\author{Cornelia Dru\c{t}u}
\email{drutu@maths.ox.ac.uk}
\address{Mathematical Institute \\
 University of Oxford \\ Oxford, UK.}
\author{Davide Spriano}
\email{spriano@maths.ox.ac.uk}
\address{Mathematical Institute \\
 University of Oxford \\ Oxford, UK.}
 \author{Stefanie Zbinden}
\email{sz2020@hw.ac.uk}
\address{Heriot-Watt University}
\date{\today}
\definecolor{darkgreen}{cmyk}{1,0,1,.2}
\long\def\comment#1\endcomment{}
\numberwithin{equation}{section}
\newtheorem{theorem}[equation]{Theorem}
\newtheorem{corollary}[equation]{Corollary}
\newtheorem{lemma}[equation]{Lemma}
\newtheorem{notation}[equation]{Notation}
\newtheorem{cvn}[equation]{Convention}
\newtheorem{question}[equation]{Question}
\newtheorem{definition}[equation]{Definition}
\newtheorem{claim}{Claim}
\theoremstyle{definition}
\newtheorem{example}[equation]{Example}
\newtheorem{remark}[equation]{Remark}
\newtheoremstyle{citing}% name
  {3pt}%      Space above, empty = `usual value'
  {3pt}%      Space below
  {\itshape}% Body font
  {}%         Indent amount (empty = no indent, \parindent = para indent)
  {\bfseries}% Thm head font
  {}%        Punctuation after thm head
  {.5em}%     Space after thm head: " " = normal interword space;
\theoremstyle{citing}
\DeclarePairedDelimiter\abs{\lvert}{\rvert}%
\DeclarePairedDelimiter\norm{\lVert}{\rVert}%}
\newcommand{\eps}{\epsilon}
\newcommand{\mb}{\partial_*}
\newcommand {\la}{\lambda}
\newcommand{\lao}{\lambda_0}
\newcommand {\ka}{\kappa}
\newcommand{\kao}{\kappa_0}
\newcommand{\net}{{n(\eta)}} %the number of points you subdivide \gamma
\newcommand{\nto}{{n}} %the previous one + N_1 - D.
\newcommand{\exit}{{t_e}}
\newcommand{\gr}{\gamma_R} %subpath of \gamma, used in Claim 2. 
\newcommand{\bt}{\mathbf{t}} %bold t, used for the t-middle
\newcommand{\nn}{{\mathcal N}}%nbhd
\newcommand{\bp}{e}%basepoint used in Section 5
\newcommand{\wmorse}{\mu}
\newcommand{\col}{\qgot}
\newcommand{\W}{\rho}
\renewcommand{\H}{\tau}
\newcommand{\K}{\chi}
\newcommand{\C}{C}
\newcommand{\length}[1]{{\norm{{#1}}}}
\newcommand{\dlength}[1]{{\abs{{#1}}}}
\newcommand{\R}{\mathbb{R}}
\newcommand{\N}{\mathbb{N}}
\newcommand {\qgot}{{\mathfrak q}}
\newcommand {\pgot}{{\mathfrak p}}
\newcommand {\tgot}{{\mathfrak t}}
\newcommand {\mf}[1]{\mathfrak{#1}}
\newcommand{\mc}{\mathcal}
\newcommand{\deletethis}[1]{}
\numberwithin{equation}{section}
\date{}
\begin{document}

\begin{abstract}
We relate two notions of non-positive curvature: bounded combings and the Morse local-to-global (MLTG) property (in its weak and strong version). The latter is a property of a space that has been shown to eliminate pathological behavior of Morse geodesics. We showcase its importance in a survey in the appendix. We show that having a bounded combing implies the weak MLTG property. If the Morse boundary of a group is $\sigma$--compact, we show that the weak MLTG property is upgraded to the (strong) MLTG property.
\end{abstract}

\maketitle

\section{Introduction}

A \emph{Morse geodesic} is, roughly speaking, a geodesic near which the space has negative curvature features. Thus, every geodesic in a Gromov hyperbolic space is Morse and, conversely, if every geodesic in a space is uniformly Morse, then the space is hyperbolic \cite{Bonk:quasigeodesic}. It is therefore not surprising that, in the study of spaces of coarse non-positive curvature, the investigation of the behaviour of Morse geodesics has played an important part \cite{OlshankiiOsinSapir:lacunary,DurhamTaylor:convexcocompact, Cordes:Morse,aougabdurhamtaylor:pulling,CharneyCordesSisto:complete,zbinden:morse, fioravantikarrersistozbinde:cech}. A key object in this investigation is a particular type of boundary associated to Morse geodesics, the \emph{Morse boundary}. Introduced in \cite{Cordes:Morse} building upon \cite{CharneySultan:contracting}, the Morse boundary is a topological space that \say{encodes Morse directions}, in a way analogous to the Gromov boundary.

Unfortunately, Morse geodesics do not behave quite as well as geodesics in hyperbolic spaces, obstructing several results from being generalized. For instance, a (non-virtually cyclic) group may contain a Morse quasi-geodesic (with respect to a word metric) without containing a Morse element \cite{fink:morse}, or it may contain a Morse element, but not a free subgroup \cite{OlshankiiOsinSapir:lacunary}. However, if a group satisfies the \emph{Morse-local-to global (MLTG)} property, then these pathologies disappear \cite{RussellSprianoTran:thelocal} and a wide range of geometric, algebraic, topological and language-theoretical properties are satisfied \cite{RussellSprianoTran:thelocal, CordesRussellSprianoZalloum:regularity, sistozalloum:morse, minehspriano:separability, IslamWeisman:morseproperties, HeSprianoZbinden:sigmacompactnes}. We will briefly overview these properties at the end of the introduction. 

Thus, it is very useful to identify classes of spaces that satisfy the MLTG property.

\subsection{Main results}

In our first result, we do this for certain spaces with bounded combings. See Section \ref{sec:combings} for a definition of bounded combing.

\begin{theorem}\label{thm:main_intro}
 Let $X$ be a geodesic metric space on which $\mathrm{Isom}(X)$ acts coboundedly. If $X$ has a bounded quasi-geodesic combing and  $\sigma$--compact Morse boundary then $X$ satisfies the MLTG property.
\end{theorem}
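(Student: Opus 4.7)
The theorem is naturally a two-step assertion, exactly as announced in the abstract: \emph{bounded quasi-geodesic combing} yields the weak MLTG property, and then $\sigma$--compactness of the Morse boundary together with the cobounded $\mathrm{Isom}(X)$--action upgrades this to the full MLTG property. My plan is therefore to prove two independent propositions and concatenate them.

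\medskip

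\textbf{Step 1 (combing $\Rightarrow$ weak MLTG).} Given a local $(\lambda,L)$--quasi-geodesic $\gamma$ that is locally $N$--Morse on windows of length $\ell$, I would sample points $x_0,x_1,\dots,x_n$ along $\gamma$ at scale comparable to $\ell$, and replace $\gamma$ between consecutive samples by the combing line $\sigma_i$ joining $x_i$ to $x_{i+1}$. The boundedness of the combing controls, up to a uniform constant, how much $\sigma_i$ and $\sigma_{i+1}$ fellow-travel near their common endpoint, and also how close each $\sigma_i$ stays to the piece of $\gamma$ it replaces (since the combing is itself a quasi-geodesic and the local window of $\gamma$ is $N$--Morse). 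Concatenating the $\sigma_i$'s then gives a path uniformly close to $\gamma$ whose global quasi-geodesic constants depend only on $(\lambda,L,\ell,N)$ and the combing data; from this $\gamma$ inherits global quasi-geodesic constants, which is precisely weak MLTG.

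\medskip

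\textbf{Step 2 (weak MLTG $+$ $\sigma$--compact Morse boundary $\Rightarrow$ MLTG).} Starting from a local $N$--Morse quasi-geodesic $\gamma$, Step 1 already gives a global quasi-geodesic $\bar\gamma$, so the missing ingredient is a uniform bound on its Morse gauge in terms of $N$. I would argue by contradiction: assume a sequence $\gamma_k$ of local $N$--Morse quasi-geodesics whose Morse gauges as global paths blow up. Using coboundedness of $\mathrm{Isom}(X)$, translate each $\gamma_k$ so its basepoint lies in a fixed compact region, and extract a limiting Morse ray $\xi\in\bdry X$. Since $\gamma_k$ is locally $N$--Morse with uniform constants, the limit $\xi$ must itself be a Morse direction with gauge controlled by $N$; by $\sigma$--compactness, the relevant set of such limits is contained in a single compact piece $K_n$ of the Morse boundary, on which the Morse gauge is uniformly bounded. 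This contradicts the assumed blow-up.

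\medskip

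\textbf{Main obstacle.} The genuinely delicate step is the compactness/limit argument in Step 2: one needs the weak MLTG output to be robust enough that the limiting ray really does record the uniform local Morse gauge $N$, and one needs the topology on the Morse boundary to be compatible with the type of convergence coming from the cobounded translation. Making precise that \say{the limit of locally $N$--Morse quasi-geodesics is a boundary point of gauge controlled by $N$,} and that this limit sits in a compact piece of $\bdry X$ on which Morse gauges are uniform, is the technical core; once it is established, the contradiction and hence the theorem follow routinely.
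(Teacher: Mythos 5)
Your two-step decomposition---bounded combing yields weak MLTG, and then $\sigma$--compactness plus the cobounded isometry action upgrades weak MLTG to MLTG---is exactly the paper's architecture (Theorems~\ref{thmi:WMLTG} and \ref{thmi:non-sigma}). However, each step as you have sketched it has a genuine gap.

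\textbf{Step 1.} You conclude by saying that ``$\gamma$ inherits global quasi-geodesic constants, which is precisely weak MLTG.'' This conflates the globalisation of quasi-geodesics (Theorem~\ref{introthm:globalization_qg:v2}) with the weak MLTG property (Definition~\ref{defn:wmltg}). Weak MLTG additionally requires that the resulting global quasi-geodesic be $(Q,q,\wmorse)$--\emph{weakly Morse}, i.e.\ quasi-convex with respect to a prescribed class of quasi-geodesics, with $\wmorse$ depending only on the input data. Your ``sample and replace by combing lines'' argument, even if made rigorous, only addresses the quasi-geodesic constants. Establishing the weak-Morse conclusion is the substantial content of the paper's Theorem~\ref{thm:mltg}: it is a fairly elaborate contradiction argument built around minimal exit points of combing lines (Definition~\ref{def:exit-point}) and middle recurrence (Lemma~\ref{lemma:mid-recurrence-consequence'}), not something that falls out of the replacement-by-combing construction.

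\textbf{Step 2.} The proposed step ``translate each $\gamma_k$ to a fixed compact region and extract a limiting Morse ray $\xi$'' does not go through. There is no properness assumption on $X$, so there is no reason the translated $\gamma_k$'s have a subsequential limit at all. More importantly, even granting convergence, the limit would ``forget'' the individual $\gamma_k$'s: a single limiting ray $\xi$ does not certify that each $\gamma_k$ is Morse with a gauge controlled by that of $\xi$, which is exactly what you need to contradict the blow-up. The paper avoids this by an explicit construction: it fixes one $N$--Morse ray $\eta$, and uses the cobounded isometry action to splice translates of the \emph{finite} segments $\gamma_i$ with long translates of $\eta$ to build a single infinite ray $\zeta$ (Lemma~\ref{lemma:stiching-quasi-geodesics3}). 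The weak MLTG property enters crucially here (via Lemmas~\ref{lemma:stiching-quasi-geodesics1} and \ref{lemma:stiching-quasi-geodesics2}) to show $\zeta$ is genuinely Morse. The decisive feature of $\zeta$ is the implication ``$\zeta$ is $\tilde N$--Morse $\Rightarrow$ every $\gamma_i$ is $\Phi(\tilde N)$--Morse''; combined with an exhaustion $(M_n)$ of the Morse boundary (which exists precisely because the boundary is $\sigma$--compact), choosing $\gamma_n$ to not be $\Phi(M_n)$--Morse yields the contradiction. A compactness/limit argument of the type you propose does not provide this backward implication, and this is the missing idea rather than a technical wrinkle.
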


Theorem~\ref{thm:main_intro} is the combination of three lines of inquiry, each of independent interest, concerned respectively with the globalisation of quasi-geodesics (Theorem~\ref{introthm:globalization_qg:v2}), the weak Morse local-to-global property (Theorem~\ref{thmi:WMLTG}), and the promotion of the weak MLTG property to the (strong) MLTG property (Theorem~\ref{thmi:non-sigma}). 

Globalisation of quasi-geodesics means understanding when a local quasi-geodesic is a global quasi-geodesic. In \cite[Proposition~7.2.E]{Gromov(1987)}, Gromov proved that if, in a metric space, all paths which are local quasi-geodesics are global quasi-geodesics (choosing quantifiers appropriately), then the space is hyperbolic. In particular, for non-hyperbolic spaces, we need to impose extra conditions on local quasi-geodesics to have any hope of them being global quasi-geodesics. 

The extra condition we consider is a very natural notion of convexity called the \emph{weak Morse property} (Definition~\ref{defn:weak_Morse}). A quasi-geodesic is $(Q,q, \wmorse)$--\emph{weakly Morse} if $(Q,q)$--quasi-geodesics with endpoints on it stay in its $\wmorse$--neighbourhood (in other words, if it is $\wmorse$--quasi-convex with respect to $(Q,q)$--quasi-geodesics). We show that in a geodesic space with a combing, paths that locally are weakly Morse quasi-geodesics are global quasi-geodesics.

\begin{theorem}\label{introthm:globalization_qg:v2}
    Let $X$ be a geodesic metric space with a bounded $(\lao, \kao)$--quasi-geodesic combing. For every quasi-geodesic pair of constants $(\la, \ka)$ and every $\wmorse \geq 0$, there exists a scale $D\geq 0$, and quasi-geodesic constants $(\la', \ka')$ such that every path $\pgot$ that $D$--locally is a $(2\lao +2, \kao, \mu)$--weakly Morse $(\la,\ka)$--quasi-geodesic is a global $(\la',\ka')$--quasi-geodesic. 
\end{theorem}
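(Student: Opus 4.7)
I will establish the two-sided bound
\[
\tfrac{1}{\la'}\abs{s-t} - \ka' \;\leq\; \dist(\pgot(s),\pgot(t)) \;\leq\; \la'\abs{s-t} + \ka'
\]
for all parameters $s \leq t$ of $\pgot$. The upper bound is elementary: subdivide $[s,t]$ into subintervals of length at most $D$, apply the $D$-local $(\la,\ka)$-quasi-geodesic hypothesis on each, and use the triangle inequality; this yields $\dist(\pgot(s),\pgot(t)) \leq (\la + \ka/D)\abs{s-t} + \ka$. The substance is the lower bound.

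For the lower bound, compare $\pgot|_{[s,t]}$ with the combing $\sigma = \sigma_{\pgot(s),\pgot(t)}$, which is a $(\lao,\kao)$-quasi-geodesic of some parameter length $L$; since $\dist(\pgot(s),\pgot(t)) \geq L/\lao - \kao$, it suffices to show $L \gtrsim \abs{s-t}$. Partition $[s,t]$ as $s = s_0 < s_1 < \cdots < s_n = t$ with $s_{i+1} - s_i \leq D/2$. By the local weak Morse hypothesis, each short combing $\sigma_i := \sigma_{\pgot(s_i),\pgot(s_{i+1})}$ -- which is a $(\lao,\kao)$- hence $(2\lao+2,\kao)$-quasi-geodesic with endpoints on $\pgot|_{[s_i,s_{i+1}]}$ -- lies in $\nn_\wmorse(\pgot([s_i,s_{i+1}]))$. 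The technical heart of the proof is the following \emph{tracking assertion}: there exist a constant $C = C(\la,\ka,\wmorse,\lao,\kao)$ and parameters $0 = t_0 \leq t_1 \leq \cdots \leq t_n = L$ of $\sigma$ with $\dist(\sigma(t_i),\pgot(s_i)) \leq C$ for every $i$. Granting this, the $(\lao,\kao)$-quasi-geodesic inequality on $\sigma$, the triangle inequality, and the local $(\la,\ka)$-quasi-geodesic bound $\dist(\pgot(s_i),\pgot(s_{i+1})) \geq (s_{i+1}-s_i)/\la - \ka$ combine to give
\[
t_{i+1} - t_i \;\geq\; \lao^{-1}\bigl(\dist(\pgot(s_i),\pgot(s_{i+1})) - 2C - \kao\bigr) \;\geq\; \lao^{-1}\bigl((s_{i+1}-s_i)/\la - \ka - 2C - \kao\bigr).
\]
Choosing $D$ large enough that the right-hand side is a definite positive fraction of $(s_{i+1}-s_i)/(\lao\la)$ and summing over $i$ yields $L \gtrsim \abs{s-t}$, as required.

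The main obstacle is the tracking assertion: promoting the local weak Morse closeness of each $\sigma_i$ to $\pgot|_{[s_i,s_{i+1}]}$ into an ordered, uniform tracking of $\sigma$ by the sampled sequence $\pgot(s_0),\dots,\pgot(s_n)$. This is where the bounded combing hypothesis is used essentially and non-locally. The plan is to consider the piecewise-combing path $\tilde\pgot := \sigma_0 \cdot \sigma_1 \cdots \sigma_{n-1}$, which connects $\pgot(s)$ to $\pgot(t)$ by combing pieces and lies in $\nn_\wmorse(\pgot)$ by weak Morse, and to argue that bounded combing controls how $\sigma$ must fellow travel $\tilde\pgot$ (equivalently, how the family of combings $\sigma_{\pgot(s),\pgot(s_i)}$ nests inside $\sigma$ as $i$ varies by one at a time); some care is needed when composing asynchronous reparameterisations to avoid accumulation of constants, and this step is where the specific strength of the bounded combing axiom enters. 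Once the tracking is in place, monotonicity of the $t_i$'s is forced: a swap $t_{i+1} < t_i$ would, via the $(\lao,\kao)$-quasi-geodesic lower bound on $\sigma$ and the triangle inequality, bound $\dist(\pgot(s_i),\pgot(s_{i+1}))$ uniformly, contradicting the local $(\la,\ka)$-quasi-geodesic growth of $\pgot$ once $D$ has been chosen sufficiently large.
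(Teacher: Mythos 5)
Your outline correctly reduces the theorem to what you call the tracking assertion, and you rightly identify it as the crux and the place where boundedness of the combing must enter. That assertion corresponds, in the paper, to the combination of Lemma~\ref{lem:Morse_for_combing_lines} (the path $\pgot$ lies in a uniform neighbourhood of the combing line joining its endpoints) and the ordering argument inside Lemma~\ref{lem:local_QG_in_nbhd_is_QG}. But you do not actually prove it, and the plan you sketch for it does not close the gap.

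Your plan is to step along the nested combings $\sigma^{(i)}=\col_{\pgot(s),\pgot(s_i)}$ and use bounded combing one step at a time. Bounded combing compares $\sigma^{(i)}$ and $\sigma^{(i+1)}$ only because they share the basepoint $\pgot(s)$, giving a pointwise bound of order $\kao\dist(\pgot(s_i),\pgot(s_{i+1}))+\kao = O(D)$ per step; summing over the $n$ steps accrues an error $O(nD)$, which is not uniform. You flag this (\say{some care is needed\ldots to avoid accumulation of constants}) without resolving it, and the comparison with the concatenated path $\tilde\pgot = \sigma_0\cdots\sigma_{n-1}$ does not help either, since $\sigma$ and the pieces $\sigma_i$ do not share a basepoint, so bounded combing gives no direct control. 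The paper's Lemma~\ref{lem:Morse_for_combing_lines} sidesteps the accumulation problem with a different mechanism: one argues by contradiction, assumes $\pgot$ exits the $\theta D$--neighbourhood of the long combing line, and uses bounded combing only to locate a \emph{first} index $i$ at which $\pgot\vert_{[0,i]}$ escapes the $\theta D$--neighbourhood of $\qgot_i$ while still lying in a $\theta D + O(1)$--neighbourhood. At that moment the escape is a \emph{local} phenomenon, so a short concatenation of the form $\alpha_1 \ast \qgot\vert_{u_1u_2} \ast \alpha_2$ (built via Lemma~\ref{lem:_wiggling_ends_of_QG}) is a $(2\lao+2,\kao)$--quasi-geodesic with endpoints on $\pgot$ within parameter distance $D$, and the $D$--local weak Morse hypothesis, together with Lemma~\ref{lem:reverse_inclusion_QG_nbhd}, forces the escaping point back near the combing line -- a contradiction. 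Without this \say{first exit} contradiction, or an equivalent device, your tracking assertion remains unproved, and the remainder of your argument, which is sound modulo that assertion, does not go through.
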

A notable feature of Theorem~\ref{introthm:globalization_qg:v2} is that it can be made effective and implemented in computer computations. Indeed, if a graph is transitive (say, a Cayley graph), it is possible to list all weakly Morse geodesics of length at most $D$, and then to verify whether a path is a quasi-geodesic by checking if every subsegment of the path is on the list. 

The assumptions of Theorem~\ref{introthm:globalization_qg:v2} cannot be too far from optimal, as the following example shows. Let $X = \vee_{n=1}^\infty (S^1, nd)$ be the wedge sum of (pointed) circles of radius $n$ equipped with the path metric. Then the above theorem fails in $X$ even under the stronger assumption that the path $\pgot$ is locally a \emph{Morse} quasi-geodesic, that is, a quasi-geodesic which is weakly Morse with respect to all quasi-geodesic constants $(Q,q)$ (see Definition~\ref{defn:Morse_QG}). 

We continue the study of globalisation properties by investigating when the resulting quasi-geodesic is weakly Morse. In particular, we adopt the formalism of the established Morse local-to-global property and say that a space satisfies the \emph{weak Morse local-to-global property} (weak MLTG) if every path that locally is a Morse quasi-geodesic is globally a weakly Morse quasi-geodesic. We refer the reader to Definition~\ref{defn:wmltg} for the statement with precise quantifiers.

\begin{theorem}[Weak Morse local-to-global] \label{thmi:WMLTG}
    Let $X$ be a geodesic metric space with a bounded, quasi-geodesic combing. Then $X$ satisfies the weak Morse local-to-global property. 
\end{theorem}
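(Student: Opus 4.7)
The plan is to prove Theorem~\ref{thmi:WMLTG} in two stages: first upgrade local to global quasi-geodesic via Theorem~\ref{introthm:globalization_qg:v2}, and then establish the weak Morse bound by passing through the combing as an intermediary. Let $\pgot$ be a path that is $D$-locally a Morse $(\la,\ka)$-quasi-geodesic with Morse gauge $M$. Since a Morse quasi-geodesic is in particular $(2\lao+2, \kao, \wmorse)$-weakly Morse for $\wmorse := M(2\lao+2, \kao)$, the assumption of Theorem~\ref{introthm:globalization_qg:v2} is satisfied (possibly after shrinking $D$), and so $\pgot$ is a global $(\la', \ka')$-quasi-geodesic for appropriate constants $\la', \ka'$ depending only on $\la, \ka, M$, and the combing.

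It remains to verify the weak Morse property itself: for every pair $(Q,q)$, I must produce $\wmorse'(Q,q)$ so that any $(Q,q)$-quasi-geodesic $\alpha$ with endpoints $x, y \in \pgot$ lies in the $\wmorse'(Q,q)$-neighbourhood of $\pgot[x,y]$. The plan is to show that both $\alpha$ and $\pgot[x,y]$ are uniformly close to the combing path $c_{xy}$. On one hand, $c_{xy}$ is a $(\lao, \kao)$-quasi-geodesic with the same endpoints as $\alpha$; the bounded combing property (combings with nearby endpoints fellow travel) applied along a fine subdivision $x = z_0, z_1, \dots, z_m = y$ of $\alpha$ to the family of combings $c_{x, z_i}$ gives that $\alpha$ lies in a bounded neighbourhood of $c_{xy}$, with the bound depending only on $(Q, q)$ and the combing.

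On the other hand, I would cover $\pgot[x,y]$ by overlapping subsegments of length at most $D$; on each subsegment the local Morse property forces the corresponding local combing to stay within $M(\lao, \kao)$ of $\pgot$, and the bounded combing property then lets me stitch these local combings into $c_{xy}$ while controlling the cumulative drift. This shows $c_{xy}$ is uniformly close to $\pgot[x,y]$, and combining with the previous observation yields the desired weak Morse bound.

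The main obstacle is this stitching argument controlling the distance between $c_{xy}$ and $\pgot[x,y]$: one must ensure that the constants produced by the bounded combing property do not accumulate with the number of subsegments. This requires careful use of the fact that $\pgot$ is now known to be a global quasi-geodesic, so that the breakpoints chosen along $\pgot$ lie at bounded pairwise distance and the local-to-global transfer through the combing remains uniform in $d(x,y)$.
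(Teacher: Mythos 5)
The first half of your plan (using Theorem~\ref{introthm:globalization_qg:v2} to upgrade to a global quasi-geodesic, and showing $\pgot|_{xy}$ is uniformly close to $c_{xy}$ via the local Morse property) matches the paper's setup (cf.\ Theorem~\ref{thm:local-to-global-qg}), and is fine. The gap is in the other half: the claim that $\alpha$ lies in a bounded neighbourhood of $c_{xy}$, ``with the bound depending only on $(Q,q)$ and the combing.'' This is false. The bounded combing condition controls $d\bigl(\col_{x z_i}(t), \col_{x z_{i+1}}(t)\bigr)$ in terms of $d(z_i, z_{i+1})$, but when you stitch along a fine subdivision $z_0, \dots, z_m$ of $\alpha$, the cumulative drift from $\col_{x z_0}$ to $\col_{x z_m} = c_{xy}$ is of order $m$, which grows with the length of $\alpha$. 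There is no way to recover a uniform bound from the bounded combing alone: take $X=\mathbb{R}^2$ with its geodesic combing; a long $(Q,q)$-staircase from $x$ to $y$ can drift linearly far from the straight segment $c_{xy}$. So the intermediate object $c_{xy}$ cannot serve as a common anchor for both $\alpha$ and $\pgot|_{xy}$ in the way you propose — the combing line itself is not Morse, and arbitrary quasi-geodesics with endpoints on it need not stay near it.

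The paper's argument for Theorem~\ref{thmi:WMLTG} (which is Theorem~\ref{thm:mltg}) avoids this by never trying to pin $\alpha$ directly to $c_{xy}$. It instead argues by contradiction: assuming a $(Q,q)$-quasi-geodesic $\eta$ escapes a very large neighbourhood of $\gamma$, it produces, via a careful ``exit point'' analysis of the combing lines $\col_i$ from $\gamma(0)$ to points $z_i$ along $\eta$, a specific path of length linearly controlled in a parameter $\ell$ that joins two points on $\gamma$ (at parameter-distance roughly $\ell$) while avoiding the $D$-neighbourhood of a middle subsegment of $\gamma$. This directly contradicts the local Morse property through the middle-recurrence characterisation (Lemma~\ref{lemma:mid-recurrence-consequence'}). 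The crucial missing idea in your proposal is precisely this use of middle recurrence: the local Morse hypothesis must be leveraged \emph{quantitatively} against a constructed detour path, rather than by attempting to pull both $\alpha$ and $\pgot$ near a single combing line. Without some mechanism of that kind, the accumulation problem you flagged in your last paragraph is not a technical nuisance to be managed but a genuine obstruction.
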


A natural question to ask is the following. 
\begin{question}
 For what consequences of the MLTG property it suffices to have the weak Morse local-to-global property?
\end{question}

The difference between the weak and the (strong) Morse local-to-global is relatively subtle. Indeed, both have the form \say{If a path is $L$--locally a Morse quasi-geodesic, then it is globally a quasi-geodesic that is weakly Morse with respect to a pair of quasi-geodesic constants $(Q,q)$}. The difference is that in the weak case, the quantifiers are $\forall (Q,q) \ \exists L$, and in the strong case $\exists L \ \forall (Q,q)$. The question of whether one can upgrade the weaker property to the stronger one then becomes a question on whether one can deduce quasi-convexity for all quasi-geodesic constants by only looking at a finite number of them. To solve such a question, a well established technique is to exploit some source of compactness, and the natural object to consider is the Morse boundary. However, the Morse boundary of a metric space is compact if and only if the space is hyperbolic. Hence, we need to find a weaker notion of compactness. Our result is the following. 

\begin{theorem}\label{thmi:non-sigma}
    Let $X$ be a geodesic metric space satisfying the weak MLTG property and whose isometry group acts coboundedly on $X$. If the Morse boundary of $X$ is $\sigma$--compact then $X$ satisfies the MLTG property.
\end{theorem}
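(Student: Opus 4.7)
I plan to argue by contradiction, using the cobounded action to localise a failing sequence near a basepoint and $\sigma$-compactness of the Morse boundary to produce a uniform Morse gauge bound that contradicts the failure. Suppose $X$ satisfies the weak MLTG and has $\sigma$-compact Morse boundary, but the MLTG fails for some input data $(\lambda, \kappa, N)$. Write $\partial_* X = \bigcup_m K_m$ as an increasing union of compact subsets; the topology on the Morse boundary associates to each $K_m$ a uniform Morse gauge $N_m$, and we may arrange $N_m \le N_{m+1}$. For each $n \in \mathbb{N}$ I instantiate the negation of the MLTG with scale $n$ and candidate Morse gauge $N^*(Q, q) = N_n(Q, q) + n$, obtaining a path $\pgot_n$ that is $n$-locally $(\lambda, \kappa, N)$-Morse quasi-geodesic, a pair $(Q_n, q_n)$, a $(Q_n, q_n)$-quasi-geodesic $\alpha_n$ with endpoints on $\pgot_n$, and a point $p_n \in \alpha_n$ with $d(p_n, \pgot_n) > N_n(Q_n, q_n) + n$. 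Using coboundedness, I translate each $(\pgot_n, \alpha_n)$ so that the closest point on $\pgot_n$ to $p_n$ becomes a fixed basepoint $o$.

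\medskip

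Next, I extract a subsequential limit $\pgot_\infty$ of the translated $\pgot_n$, for instance by Arzel\`a-Ascoli in the proper case or an ultralimit construction otherwise; applying the weak MLTG at a fixed reference pair such as $(1,0)$ provides the uniform global quasi-geodesic constants needed for equicontinuity. Because each $\pgot_n$ is $n$-locally $(\lambda, \kappa, N)$-Morse quasi-geodesic and $n \to \infty$, the limit $\pgot_\infty$ is locally $(\lambda, \kappa, N)$-Morse quasi-geodesic at every scale, and hence globally a Morse quasi-geodesic by the weak MLTG applied at each $(Q, q)$. As a bi-infinite Morse quasi-geodesic, $\pgot_\infty$ determines two boundary points $\xi^\pm \in \partial_* X$, which by $\sigma$-compactness lie in some common $K_{m_0}$, so the global Morse gauge of $\pgot_\infty$ is dominated by $N_{m_0}$.

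\medskip

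To close the argument, I take $n \ge m_0$ large; then $\pgot_n$ approximates $\pgot_\infty$ near $o$ within an error $\epsilon_n \to 0$ on any prescribed bounded region containing the endpoints of $\alpha_n$. A fellow-travelling argument using that $\pgot_\infty$ is $(Q_n, q_n, N_{m_0}(Q_n, q_n))$-weakly Morse then shows that $\alpha_n$ stays within an $N_{m_0}(Q_n, q_n) + C + \epsilon_n$-neighbourhood of $\pgot_n$, for a universal constant $C$. Invoking $N_{m_0} \le N_n$, this upper bound is at most $N_n(Q_n, q_n) + C + \epsilon_n$, contradicting $d(p_n, \pgot_n) > N_n(Q_n, q_n) + n$ for $n$ sufficiently large.

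\medskip

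The main obstacle is the extraction of the bi-infinite Morse quasi-geodesic limit together with the verification that $\pgot_\infty$ represents well-defined boundary points to which $\sigma$-compactness applies, in a potentially non-proper $X$; I expect an ultralimit argument combined with coboundedness to resolve this, but one must track that the limit remains in an appropriately compatible model of the space and its Morse boundary. A secondary delicate point is to ensure, when the constants $(Q_n, q_n)$ are unbounded, that the endpoints of $\alpha_n$ fall into the region where $\pgot_n$ well-approximates $\pgot_\infty$; this may necessitate a quantitative extraction whose scale grows with $(Q_n, q_n)$, or a careful diagonalisation.
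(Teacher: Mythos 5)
Your approach is genuinely different from the paper's, and I believe it has a gap that is hard to repair.

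The paper does not take a limit of failing paths. Instead, given a sequence $\gamma_i$ of paths that are $L_i$-locally Morse quasi-geodesics but increasingly far from being $\Phi(M_i)$-Morse (for the exhaustion gauges $M_i$), it builds a single explicit ray $\zeta$ \emph{inside $X$} by interleaving translates of the $\gamma_i$ with long translates of a fixed Morse ray $\eta$, using the coboundedness of $\mathrm{Isom}(X)$ to place the pieces (Lemma~\ref{lemma:stiching-quasi-geodesics3}). A delicate combinatorial argument (Lemmas~\ref{lemma:stiching-quasi-geodesics1}, \ref{lemma:stiching-quasi-geodesics2}, and Claim~\ref{claim:stich}) then shows this concatenation is a globally Morse quasi-geodesic. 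Since $\zeta$ is an honest Morse ray in $X$, the exhaustion applies directly: $\zeta$ is $M_n$-Morse for some $n$, and a ``subsegment-extraction'' statement baked into Lemma~\ref{lemma:stiching-quasi-geodesics3} forces $\gamma_n$ to be $\Phi(M_n)$-Morse, contradicting the choice of $\gamma_n$.

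The gap in your argument is the limit extraction. The theorem is stated for a general geodesic metric space with cobounded isometry group; $X$ need not be proper, so Arzel\`a--Ascoli is unavailable. You offer an ultralimit as the fallback, but an ultralimit of $X$ along a sequence of basepoints is in general a \emph{different} metric space, and there is no reason its Morse boundary should agree with $\partial_* X$. In particular, the $\sigma$-compactness hypothesis on $\partial_* X$ gives you nothing about the boundary point determined by $\pgot_\infty$ if $\pgot_\infty$ lives in an ultralimit rather than in $X$. Even in the proper case, you would still need to ensure that the translated domains of the $\pgot_n$ grow on both sides of the basepoint $o$ so that the limit is bi-infinite (the closest point to $p_n$ could drift towards an endpoint of $\pgot_n$), and that the $M$-Morse local condition is closed under uniform-on-compacts limits — neither is automatic. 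The paper's stitching construction circumvents all of these issues by never leaving $X$: it produces a concrete ray to which the exhaustion is immediately applicable. If you want to salvage a compactness-based argument, you would need either a properness hypothesis or a substitute that ties the limit object back to $\partial_* X$; absent that, the route via explicit concatenation is the one that closes.
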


Since He, the second author and the third author showed in \cite{HeSprianoZbinden:sigmacompactnes} that a space with the MLTG property has a $\sigma$--compact Morse boundary, we obtain the following dichotomy.
\begin{corollary}
     Let $X$ be a geodesic metric space satisfying the weak MLTG property and whose isometry group acts coboundedly on $X$. Then $X$ satisfies the MLTG property if and only if the Morse boundary of $X$ is $\sigma$--compact.
\end{corollary}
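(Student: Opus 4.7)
The plan is straightforward: the corollary is a formal consequence of the biconditional obtained by combining Theorem~\ref{thmi:non-sigma} with the result of \cite{HeSprianoZbinden:sigmacompactnes} quoted immediately before the statement, with one reference supplying each direction. No genuine argument beyond these two invocations is needed, and the main obstacle (if any) is purely bookkeeping: checking that the hypotheses line up on each side.

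For the forward implication, suppose $X$ satisfies the MLTG property. Then I would simply invoke \cite{HeSprianoZbinden:sigmacompactnes}, which asserts that any geodesic space with MLTG has $\sigma$--compact Morse boundary. Notably, this direction needs neither the cobounded action hypothesis nor the weak MLTG hypothesis of the corollary: MLTG formally implies weak MLTG, and the result of \cite{HeSprianoZbinden:sigmacompactnes} does not require cobounded isometry action.

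For the converse, suppose the Morse boundary of $X$ is $\sigma$--compact. Under the standing assumptions of the corollary, $X$ is a geodesic metric space satisfying the weak MLTG property and $\mathrm{Isom}(X)$ acts coboundedly on $X$. Together with $\sigma$--compactness of the Morse boundary, these are precisely the hypotheses of Theorem~\ref{thmi:non-sigma}, so that theorem applies and produces the MLTG property at once. Since both implications reduce to quoting an existing theorem, the substantive content of the corollary lies entirely in the proofs of Theorem~\ref{thmi:non-sigma} and of the main theorem of \cite{HeSprianoZbinden:sigmacompactnes}; the corollary itself is a one-line assembly.
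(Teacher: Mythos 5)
Your proof is correct and matches the paper's own reasoning exactly: the corollary is presented there as an immediate consequence of Theorem~\ref{thmi:non-sigma} together with the cited result of \cite{HeSprianoZbinden:sigmacompactnes}, precisely as you assemble it. Your additional observation that the forward direction needs neither coboundedness nor the weak MLTG hypothesis is accurate bookkeeping, though the paper does not dwell on it.
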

In general, understanding the topology of boundaries is difficult. For instance, only recently the third author constructed the first example of a group with non-$\sigma$--compact Morse boundary using small cancellation theory \cite{Zbinden:SmallCancellation}.

\subsection{The Morse local-to-global property: a survey}\label{subsec:intro_survey} In Appendix~\ref{sec:survey} we provide a survey of recent developments of the MLTG property and upgrade some of the results about MLTG groups. We give a brief presentation here.

There are two main strengths of the MTLG property. The first is that it is invariant by quasi-isometry and satisfied by a lot of examples of interests such as Mapping Class Groups and Teichm\"uller spaces, CAT(0) spaces, fundamental groups of closed 3-manifolds, injective spaces and extra-large type Artin groups. For a more detailed list we refer to Theorem~\ref{appendix:MLTG_examples}. Notably, all the known counterexamples (Examples~\ref{counterex:1}, \ref{counterex:2}, \ref{counterex:3}) are not finitely presented. 
The second strength is that the MLTG property has a variety of strong consequences with very distinct flavours. For instance, there are combination theorems for stable subgroups of MLTG groups (Theorem~\ref{appendix:combination_stable}) which imply that if a non-cyclic, torsion-free MLTG group has a Morse ray, then it has a stable free subgroup (Corollary~\ref{appendix:MLTG_Morse_imply_free}). The latter fact is one of the key ingredients in showing that stable subgroups of MLTG groups present a growth-gap (Theorem~\ref{appendix:growth_gap}), the other being that MLTG groups satisfy excellent algorithmic and language theoretic properties (Theorem~\ref{appendix:reg_languages}). On the topological side, one can show that products of stable subgroups in certain MLTG groups are separable in the profinite topology (Theorem~\ref{appendix:product_separable}), and that the Morse boundary of MLTG groups needs to be $\sigma$--compact (Theorem~\ref{appendix:sigma_cpt}), allowing to better understand the behaviour of stationary measures on a variety of associated boundaries (Theorem~\ref{appendix:measures}).

Lastly, we want to present the results that we improve by relaxing some hypotheses. A detailed discussion can be found in Section~\ref{appendix:stability_section}. Firstly, by improving the combination theorem for stable subgroups (see Theorem~\ref{appendix:combination_stable}) we can find free subgroups in MLTG groups.

\begin{corollary}[{\cite{RussellSprianoTran:thelocal} and Lemma~\ref{appendix:improved_malnormal}, abundance of free subgroups}]\label{intro:MLTG_Morse_imply_free}
    Let $G$ be a Morse local-to-global group. If $Q$ is a non-trivial, infinite index stable subgroup of $G$, then there is an infinite order element h such that $\langle Q, h\rangle \cong Q  \ast \langle h \rangle$ and $\langle Q, h \rangle$ is stable in $G$.
\end{corollary}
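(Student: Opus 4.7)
The plan is to produce $h$ as a sufficiently large power of a suitable Morse element of $G$ and then invoke the improved combination theorem (Theorem~\ref{appendix:combination_stable}) together with Lemma~\ref{appendix:improved_malnormal}, applied to $H_1 = Q$ and $H_2 = \langle h \rangle$.

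First I would produce a Morse element $g \in G$. Since $Q$ is a non-trivial stable subgroup, any infinite geodesic in $Q$ is Morse in $G$; combining this with the MLTG property yields a Morse element $g_0 \in G$ (via results surveyed in Section~\ref{appendix:stability_section}). Next, using that $Q$ has infinite index, I would find cosets $xQ$ at arbitrarily large distance from $Q$. Conjugating $g_0$ by such elements (or choosing appropriate translates) produces a Morse element $g$ whose axis $A_g$ has arbitrarily long excursions away from $Q$. Since $Q$ is stable with some Morse gauge $\mu$, any quasi-geodesic with endpoints in $Q$ stays in the $\mu$--neighborhood of $Q$; consequently any subsegment of $A_g$ lying close to $Q$ must be of uniformly bounded length.

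Taking $h = g^n$ for $n$ large enough, I would then verify the hypotheses of Lemma~\ref{appendix:improved_malnormal}. The intersection $\langle h \rangle \cap Q = \{1\}$ follows because a non-trivial element $h^k \in Q$ would force a long subsegment of $A_{g^n}$ to lie near $Q$, contradicting the control obtained in the previous step. The improved malnormality condition follows similarly: coarse intersections of translates of $A_g$ with $Q$ are uniformly bounded once $n$ is chosen larger than this bound. Theorem~\ref{appendix:combination_stable} then delivers that $\langle Q, h \rangle \cong Q \ast \langle h \rangle$ and that this subgroup is stable in $G$.

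The principal obstacle is the quantitative calibration in choosing $g$ and $n$: the axis of $g$ must have bounded overlap with the $\mu$--neighborhood of $Q$ for the specific $\mu$ coming from the stability gauge of $Q$, while the Morse gauge of $g^n$ must remain in the regime required by the combination theorem. The infinite-index hypothesis is essential here — without it, every Morse axis could be forced to fellow-travel $Q$ for arbitrarily long stretches, destroying the ping-pong/combination argument. Once this calibration is achieved, the algebraic conclusion and stability both follow from the improved combination machinery.
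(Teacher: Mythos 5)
Your high-level strategy is in the right spirit — produce a Morse element $h$ whose axis has controlled overlap with $Q$, then feed it into Theorem~\ref{appendix:combination_stable} — but there are two genuine problems, and your route is also quite different from the one the paper actually takes.

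First, the mechanism you propose for producing $h$ does not obviously work. Conjugating $g_0$ by an element $x$ far from $Q$ gives an element $xg_0x^{-1}$ with axis the left translate $x\cdot A_{g_0}$, and there is no reason for this translate to stay far from $Q$ — it can still fellow-travel $Q$ for long stretches depending on where $A_{g_0}$ sits relative to $x^{-1}Q$. The heart of the construction (producing an element whose axis has uniformly bounded coarse overlap with $Q$, using infinite index) is exactly the nontrivial step, and replacing it by an appeal to "conjugation or choosing appropriate translates" leaves the gap open. Second, you apply the improved malnormality input to $H_2 = \langle h\rangle$, but for an infinite cyclic subgroup, almost malnormal coincides with malnormal, so nothing is gained; when $G$ has torsion, it is the larger subgroup $P$ of \cite[Corollary~3.5]{RussellSprianoTran:thelocal} (roughly, the elementary closure of the relevant element) whose almost malnormality one must verify, not $\langle h\rangle$ itself. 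The subtlety that Lemma~\ref{appendix:improved_malnormal} is designed to address is precisely that this $P$ may fail to be malnormal (only almost malnormal) in the presence of torsion.

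The paper's actual argument is much shorter and of a different kind: it observes that the proof of \cite[Corollary~3.5]{RussellSprianoTran:thelocal} already establishes that the relevant subgroup $P$ is almost malnormal (torsion-freeness was only used in the last step to promote "finite intersection" to "trivial intersection"), and that Lemma~\ref{appendix:improved_malnormal} now makes the combination theorem applicable under almost malnormality. So the paper cites and tweaks, whereas you attempt to rebuild RST's construction from scratch. Your version could in principle be made to work, but it requires carrying out the axis construction and the calibration you flag as "the principal obstacle," and fixing the $\langle h\rangle$ versus $P$ issue; as written, it is not a complete proof.
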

The main improvement from \cite{RussellSprianoTran:thelocal} is that in the original version $G$ was required to be torsion-free, a rather restrictive assumption. In particular, Corollary~\ref{intro:MLTG_Morse_imply_free} allows to upgrade the growth-gap of stable subgroups to \emph{all} MLTG groups.
\begin{theorem}[\cite{CordesRussellSprianoZalloum:regularity} and Lemma~\ref{appendix:improved_malnormal}, growth gap]
    Let $G$ be a group with a finite generating set $S$ and with the Morse local-to-global property. Let $H < G$ be infinite and of infinite index. Then the growth rate of $H$ with respect to $S$ is strictly smaller than the growth rate of $G$ with respect to $S$.
\end{theorem}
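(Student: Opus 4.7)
The plan is to adapt the proof of the growth gap from \cite{CordesRussellSprianoZalloum:regularity} by replacing each appeal to the torsion-free abundance-of-free-subgroups lemma of \cite{RussellSprianoTran:thelocal} with the improved Corollary~\ref{intro:MLTG_Morse_imply_free}, which does not require $G$ to be torsion-free.

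First, I would locate precisely where torsion-freeness enters the CRSZ argument. The crucial step is the one that, given an infinite subgroup $H$ of infinite index, produces a Morse element $g \in G$ such that $\langle H, g \rangle$ is a stable free product $H \ast \langle g \rangle$ inside $G$. In \cite{CordesRussellSprianoZalloum:regularity}, this was obtained by applying the torsion-free combination theorem of \cite{RussellSprianoTran:thelocal}. Our Corollary~\ref{intro:MLTG_Morse_imply_free} provides exactly the same conclusion without any torsion-free hypothesis on $G$, so the substitution is direct.

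Second, I would feed this improved conclusion into the remaining counting estimate of CRSZ unchanged. That estimate shows that the free factor $\langle g \rangle$ contributes at least $\lambda^n$ new elements in the $n$-ball of $G$ for some $\lambda > 1$ depending on the word length of $g$, while the free-product structure guarantees these elements remain outside $H$. Combined with the stability of $\langle H, g \rangle$, which ensures that the embedding $H \ast \langle g \rangle \hookrightarrow G$ has controlled distortion and therefore preserves word-length comparisons at the relevant scale, this forces the exponential growth rate of $H$ with respect to $S$ to be strictly smaller than that of $G$. This counting step uses only the algebraic free-product structure and the coarse-geometric stability, so it transports verbatim to the non-torsion-free setting.

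Third, I would audit the auxiliary lemmas of \cite{CordesRussellSprianoZalloum:regularity} — properties of Morse elements, growth-exponent comparisons, and the choice of a transverse element $g$ for which no nontrivial power lies in $H$ — to verify that no hidden torsion-free assumption was used elsewhere. The main obstacle I expect is precisely this audit: small auxiliary statements in the original proof may tacitly rely on torsion-freeness, for instance in arguments that iterate over elements of infinite order or invoke malnormality-style conclusions, and each such dependency must be patched by reapplying Corollary~\ref{intro:MLTG_Morse_imply_free} or by a direct workaround based on the stability of $H \ast \langle g \rangle$. Once this audit is complete, the theorem follows.
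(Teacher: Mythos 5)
Your proposal matches the paper's proof essentially verbatim: the paper observes that torsion-freeness in \cite[Theorem~5.1]{CordesRussellSprianoZalloum:regularity} is used exactly once, to produce an element $h$ with $\langle H, h\rangle \cong H \ast \langle h\rangle$, and that the remaining counting argument (\cite[5.2]{CordesRussellSprianoZalloum:regularity}) only takes such a subgroup as input; substituting Corollary~\ref{appendix:MLTG_Morse_imply_free} for the torsion-free version then removes the hypothesis. Your additional ``audit'' step is sound practice and is implicitly what the paper asserts when it says this is the only place the hypothesis appears.
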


\section*{Acknowledgments} 
We are grateful to Harry Petyt for helpful conversations on the weak Morse local-to-global property and for suggesting to investigate the connection between it and the $\sigma$-compactness of the Morse boundary. We thank Jacob Russell and Abdul Zalloum for important inputs on the appendix. Finally, we thank Carolyn Abbott and Joshua Perlmutter for comments on an early draft of the appendix. Part of this work was supported by the Research Center of Christ Church College, Oxford. 

\newpage
\section{Preliminaries}

\begin{cvn}
Unless otherwise stated, we will always assume that metric spaces are geodesic.
\end{cvn} 

Given a metric space $X$ and a subset $Y\subseteq X$, we denote the closed $r$--neighbourhood of $Y$ by $\mc{N}_r(Y)$.
\begin{definition}[Quasi-isometric embedding, quasi-isometry, quasi-geodesic]
    Given metric spaces $X,Y$, a map $f\colon X  \to Y$ is a \emph{$(\la, \ka)$--quasi-isometric embedding} if for every pair of points $x,y\in X$ we have that
   \[\frac{1}{\la}\dist_X(x,y)  - \ka \leq \dist_Y (f(x), f(y)) \leq \la \dist_X(x,y) + \ka.\]
   If, moreover, $\mc{N}_\ka(f(X)) = Y$, then $f$ is called a \emph{$(\la, \ka)$--quasi-isometry}. A \emph{$(\la, \ka)$--quasi-geodesic} is a $(\la, \ka)$--quasi-isometric embedding $\gamma \colon I \to X$, where $I\subseteq \RR$ is a closed interval.
\end{definition}

For us a \emph{path} is a continuous map $\pgot \colon [a,b] \to X$.

\begin{notation}
Let  $x\in X$ be a point, $A\subseteq X$ a subset and  $\pgot \colon [a,b] \to X$ a path. By abuse of notation, we write $x\in \pgot$ to mean that $x$ belongs to the image of $\pgot$, and $\pgot \subseteq A$ to mean that the image of $\pgot$ is contained in $A$. We denote by $\pgot^{-1}$ the path $\pgot^{-1} : [a, b]\to X$, $\pgot^{-1}(t) = \pgot(b+a - t)$. We denote by $\dlength{\pgot}$ the length of the domain of $\pgot$, $\abs{b-a}$, and  by $\length {\pgot}$ the (arc-)length of the path $\pgot $, if $\pgot$ is rectifiable.
\end{notation}

Let $u, v\in \pgot$. We say that $x\in \pgot$ \emph{lies between $u$ and $v$} if there are $s_1\leq s_2\leq s_3$ in the domain of $\pgot$ such that $\pgot(s_1) = u$, $\pgot(s_2) = x$, $\pgot(s_3)= v$. If $s_1$ is the smallest parameter so that $\pgot(s_1) = u$ and $s_3$ is the largest parameter so that $\pgot(s_3) = v$ we denote by $\pgot\vert_{uv}$ the restriction of $\pgot$ to $[s_1, s_3]$ (i.e. the maximal sub-path of $\pgot$ with image composed of points that lie between $u$ and $v$).

Note that quasi-geodesics do not need to be continuous, and can be rather complicated. However, in a geodesic metric space each quasi-geodesic can be approximated by one that is continuous and well-behaved, so that it suffices to study the latter ones. This is made precise in the following Lemma. 

\begin{lemma}[Improved quasi-geodesics \cite{BridsonHaefliger}, Lemma 1.11, \cite{Burago-Ivanov}, Proposition 8.3.4]\label{lemma:tamingqg}
Let $X$ be a geodesic metric space. For every  $(\la, \ka)$-quasi-geodesic $\gamma: [a, b]\to X$ there exists a continuous $(\la, \ka')$-quasi-geodesic $\bar{\gamma} : [a, b]\to X$ such that 
\begin{enumerate}
    \item $\gamma(a) = \bar{\gamma}(a)$ and $\gamma(b) = \bar{\gamma}(b)$;
    \item $\ka' = 2(\la + \ka)$;
    \item $\length {\bar{\gamma}|_{[t, t']}}\leq k_1 \dist (\bar{\gamma}(t), \bar{\gamma}(t'))+k_2$ for all $t, t'\in [a, b]$, where $k_1 = \la(\la+\ka)$ and $k_2 = (\la\ka' + 3)(\la+\ka)$;
    \item the Hausdorff distance between the images of $\gamma$ and $\bar{\gamma}$ is less than $\la+\ka$.
\end{enumerate}
\end{lemma}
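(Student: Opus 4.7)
The plan is to produce $\bar{\gamma}$ as a piecewise geodesic spline that samples $\gamma$ at unit-spaced parameters. Set $t_i = a + i$ for $i = 0,\dots,\lfloor b-a\rfloor$ and $t_N = b$, and define $\bar{\gamma}$ on each $[t_i, t_{i+1}]$ to be a geodesic from $\gamma(t_i)$ to $\gamma(t_{i+1})$, parameterized affinely so that $\bar{\gamma}\colon [a,b] \to X$ is continuous with $\bar{\gamma}(a)=\gamma(a)$ and $\bar{\gamma}(b)=\gamma(b)$. Property (1) is then immediate. Property (4) follows from the observation that consecutive samples satisfy $d(\gamma(t_i), \gamma(t_{i+1})) \leq \lambda + \kappa$, so each subsegment $\bar{\gamma}|_{[t_i, t_{i+1}]}$ is contained in a ball of radius $\lambda + \kappa$ around $\gamma(t_i)$, and conversely any $\gamma(t)$ lies within $\lambda + \kappa$ of the nearest sample point $\gamma(t_j) = \bar{\gamma}(t_j)$.

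The central estimate to extract next is the pointwise bound $d(\bar{\gamma}(t), \gamma(t)) \leq 2(\lambda + \kappa)$, obtained by applying the triangle inequality within the subinterval containing $t$ and combining the two displayed bounds above. Using this comparison in both directions and plugging into the quasi-isometric inequalities for $\gamma$, I would transfer them to $\bar{\gamma}$ and deduce property (2); a naive execution gives $\kappa'$ of the form $c(\lambda+\kappa)$ with $c$ around $5$, and reaching the stated $\kappa' = 2(\lambda + \kappa)$ requires slightly sharper accounting of the additive losses, for instance by comparing endpoints directly to the nearest sample rather than routing through $\gamma(t)$.

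For property (3), the length of $\bar{\gamma}|_{[t,t']}$ is the sum of lengths of the geodesic subsegments it traverses, each bounded by $\lambda + \kappa$. Since $[t,t']$ meets at most $|t-t'| + 2$ such subsegments, the total length is at most $(\lambda+\kappa)(|t-t'|+2)$. Substituting the lower quasi-geodesic inequality $|t-t'| \leq \lambda d(\bar{\gamma}(t),\bar{\gamma}(t')) + \lambda\kappa'$ obtained in the previous step turns this into a linear bound in $d(\bar{\gamma}(t),\bar{\gamma}(t'))$ with leading coefficient $k_1 = \lambda(\lambda+\kappa)$, as announced, and an additive term of the stated form $k_2 = (\lambda\kappa' + 3)(\lambda+\kappa)$ after a careful count of the boundary subintervals.

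The argument is at its core bookkeeping: the construction and every individual estimate are standard, and the only real obstacle is arithmetical, namely keeping the triangle-inequality losses tight enough at each step to land on the precise constants $\kappa'$, $k_1$, $k_2$ in the statement rather than slightly looser universal multiples.
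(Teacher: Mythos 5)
Your construction — sampling $\gamma$ at unit-spaced parameters (plus the endpoint $b$), connecting consecutive samples by geodesics, and parameterizing affinely — is exactly the construction given in the cited reference (Bridson–Haefliger, Lemma III.H.1.11), which the paper does not reprove. Your outline is correct in every step, and your self-diagnosis is accurate: routing the quasi-geodesic inequalities through the nearest sample point (at parameter distance at most $1/2$, hence at metric distance at most $\tfrac12(\lambda+\kappa)$ from $\bar\gamma(t)$), rather than through $\gamma(t)$, is precisely the refinement that lands on $\kappa' = 2(\lambda+\kappa)$, with the condition $\lambda \geq 1$ absorbing the resulting $\tfrac1\lambda$ term in the lower inequality; properties (3) and (4) then follow by the counting and triangle arguments you describe.
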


We call such a quasi-geodesic $\bar{\gamma}$ an {\em improvement} of $\gamma$. A $(\la, \ka)$--quasi-geodesic is \emph{improved} if it is continuous and condition $(3)$ of Lemma~\ref{lemma:tamingqg} holds. 
\begin{cvn}
From now on, all quasi-geodesics are assumed to be improved.
\end{cvn}

 \subsection{Quasi-geodesics}

 In this subsection, we recall basic facts about quasi-geodesics. The results are folklore, but as we could not find a reference in the literature, we included their proofs, for the sake of completeness.

 \begin{lemma}[Containment implies finite Hausdorff distance]\label{lem:reverse_inclusion_QG_nbhd}
Let $\gamma_i$, $i=1,2$, be two $(\la, \ka)$--quasi-geodesic segments with endpoints at distance $d$. Then for all $\wmorse\geq d$ there exists $\wmorse' =(1 + 2\la^2 )\wmorse + \la\ka + \ka $ such that if $\gamma_1 \subseteq \nn_\wmorse(\gamma_2)$  then $\gamma_2\subseteq \nn_{\wmorse'}(\gamma_1)$.
\end{lemma}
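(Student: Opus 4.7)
The plan is to fix a point $p = \gamma_2(t)$ on $\gamma_2$ and locate a point of $\gamma_1$ close to $p$ by tracking an approximate ``nearest-point projection'' from $\gamma_1$ to $\gamma_2$ along a discretization of $\gamma_1$, then invoking a discrete intermediate-value argument. After possibly reversing $\gamma_1$'s parametrization, I assume the endpoints are paired so that $d(\gamma_1(a_1), \gamma_2(a_2)) \leq d$ and $d(\gamma_1(b_1), \gamma_2(b_2)) \leq d$.

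For each $s \in [a_1, b_1]$, the hypothesis $\gamma_1 \subseteq \mathcal{N}_\mu(\gamma_2)$ lets me choose some $\phi(s) \in [a_2, b_2]$ with $d(\gamma_1(s), \gamma_2(\phi(s))) \leq \mu$. Two estimates follow from the quasi-geodesic inequality for $\gamma_2$: first, using $d(\gamma_2(\phi(a_1)), \gamma_2(a_2)) \leq \mu + d \leq 2\mu$ and the analogous estimate at the other endpoint,
$$|\phi(a_1) - a_2| \leq 2\lambda\mu + \lambda\kappa, \qquad |\phi(b_1) - b_2| \leq 2\lambda\mu + \lambda\kappa;$$
second, whenever $d(\gamma_1(s), \gamma_1(s')) \leq \epsilon$, the ``jump'' of $\phi$ is bounded:
$$|\phi(s) - \phi(s')| \leq \lambda(2\mu + \epsilon) + \lambda\kappa.$$

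Next, I discretize $[a_1, b_1]$ as $s_0 = a_1 < s_1 < \cdots < s_n = b_1$ with $d(\gamma_1(s_i), \gamma_1(s_{i+1})) \leq \epsilon$; this is possible by continuity of $\gamma_1$. The sequence $\phi(s_0), \ldots, \phi(s_n)$ is then a walk in $[a_2, b_2]$ whose endpoints lie within $2\lambda\mu + \lambda\kappa$ of $a_2$ and $b_2$ respectively, and whose consecutive steps have size at most $C_\epsilon := \lambda(2\mu + \epsilon) + \lambda\kappa$. For any $t \in [a_2, b_2]$, a discrete intermediate-value argument yields an index $i$ with $|\phi(s_i) - t| \leq \max(C_\epsilon,\ 2\lambda\mu + \lambda\kappa)$: either $t$ lies between $\phi(s_0)$ and $\phi(s_n)$, in which case two consecutive values straddle $t$; or $t$ lies beyond that range, where the endpoint estimate applies. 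Combining with the triangle inequality and the QG upper bound for $\gamma_2$,
$$d(\gamma_2(t), \gamma_1(s_i)) \leq \lambda\,|\phi(s_i) - t| + \kappa + \mu,$$
and taking $\epsilon \to 0$ delivers a bound of order $(1+2\lambda^2)\mu + O(\lambda\kappa) + \kappa$.

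The main obstacle is that $\phi$ is neither continuous nor monotone, so the classical intermediate value theorem is not directly available. The remedy is the discretization together with the jump bound, which reduces the problem to tracking a bounded-jump discrete walk between approximately matched endpoints; the ``discrete IVT'' is then the elementary observation that such a walk comes within one step of every value lying between its starting and ending positions, while values outside that range are handled by the endpoint matching. A minor secondary point to address is the degenerate case where $\gamma_2$ is so short that the endpoint neighborhoods overlap, but in that regime the diameter of $\gamma_2$ is automatically bounded in terms of $\mu, \lambda, \kappa$ and the conclusion is immediate from the triangle inequality.
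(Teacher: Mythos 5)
Your proposal is correct and rests on the same two pillars as the paper's proof: the continuity of the (improved) quasi-geodesic $\gamma_1$ and the quasi-geodesic inequality for $\gamma_2$ applied to a short excursion of $\gamma_2$. The presentation differs: the paper works from $\gamma_2$'s side, bounding the diameter of the largest connected component of $\gamma_2\setminus\nn_\wmorse(\gamma_1)$ by showing that otherwise the connected set $\gamma_1$ would be covered by two disjoint closed neighbourhoods; you work from $\gamma_1$'s side, building a pseudo-projection $\phi$ onto $\gamma_2$'s parameter interval, discretizing $\gamma_1$, and running a discrete intermediate-value argument on the resulting bounded-jump walk. These are morally dual formulations of the same connectedness argument, and you end up bounding the same quantity, namely the parameter gap on $\gamma_2$ between two points each $\wmorse$-close to a common point of $\gamma_1$. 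Your route is a bit more explicit (and the discrete IVT would be the right tool if $\gamma_1$ were only coarsely connected rather than continuous), while the paper's is shorter by appealing directly to the topological fact that a connected set admits no proper disjoint closed cover. The final constants differ by a harmless factor of $\la$ on the $\ka$ term (your $\la^2\ka$ vs. the paper's $\la\ka$); the paper's own derivation in fact uses a slightly different form of the lower quasi-geodesic inequality than the one it states, so neither discrepancy matters for the intended use of the lemma.
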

\begin{proof}
\begin{figure}[ht!]
    \centering
    \includegraphics[width=.7\linewidth]{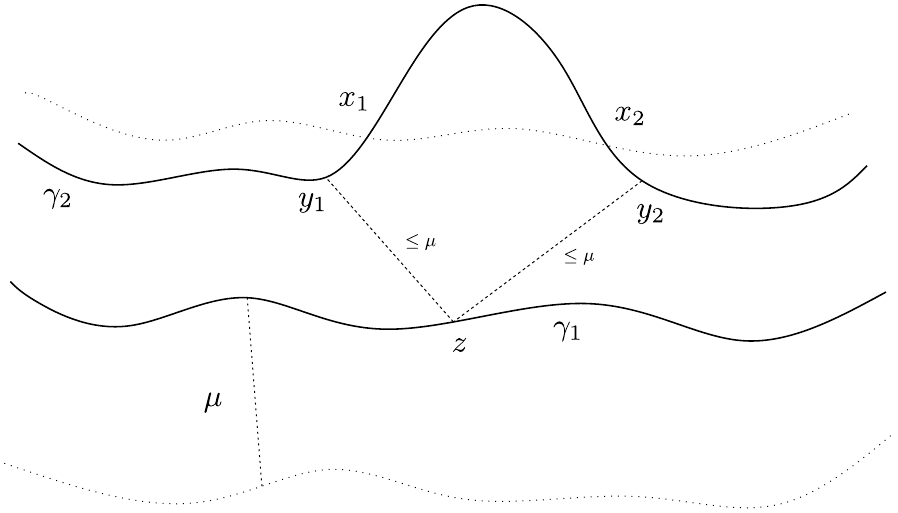}
    \caption{Proof of Lemma \ref{lem:reverse_inclusion_QG_nbhd}}
    \label{fig:proof-of-reverse-inclusion}
\end{figure}
Define an order on $\gamma_2$ by ordering its endpoints, and denote these endpoints by $a$ and $b$. 
We will bound the diameter of the connected components of $\gamma_2 \setminus \nn_\wmorse(\gamma_1)$. Consider the largest (diameterwise) such component, and let $x_1, x_2$ be its endpoints. This is depicted in Figure~\ref{fig:proof-of-reverse-inclusion}. There exists  $z\in \gamma_1$, $y_1\in \gamma_2$ before (or equal to) $x_1$, and  $y_2\in \gamma_2$ after (or equal to) $x_2$, such that $\dist (y_i, z) \leq \wmorse$. If not, $\gamma_1$ could be covered by the closed disjoint sets $\nn_\wmorse(\gamma_2\vert_{ax_1}) \sqcup \nn_\wmorse(\gamma_2\vert_{x_2 b})$, contradicting the assumption that quasi-geodesics are continuous. Thus, $\dist (y_1, y_2) \leq 2\wmorse$, and hence $\dist (m, \{y_1, y_2\}) \leq \la(2\la \wmorse +\ka) + \ka$ for all $m\in \gamma_2$ between $y_1, y_2$, yielding that $\gamma_2\subseteq \nn_{\wmorse'}(\gamma_1)$ where $\wmorse'= \la(2\la \wmorse +\ka) + \ka+\mu$. 
\end{proof}

\begin{lemma}[Appending short quasi-geodesics to a longer one]\label{lem:_wiggling_ends_of_QG}
Let $\gamma$ be a $(\la ,\ka )$--quasi-geodesic segment. Let $z_1, z_2 \in X$ and $u_1,u_2\in \gamma$ be such that
\begin{enumerate}
\item every point point $x\in \gamma$ between $u_1, u_2$ satisfies $\dist (x,z_i) \geq \dist (u_i, z_i)$.
\end{enumerate}  
Then, for all geodesics $\alpha_i$ connecting $u_i$ and $z_i$, the concatenations $\alpha_1 \ast \gamma\vert_{u_1 u_2}$ and $\gamma\vert_{u_1 u_2} \ast \alpha_2$ are $(2\la +1, \ka)$--quasi-geodesics.
If, moreover, the following condition is satisfied
\begin{enumerate}\setcounter{enumi}{1}
\item  $\dist (u_i, z_i)\leq \theta \dist (u_1, u_2)$ for some $\theta \in [0, 1/2)$;
\end{enumerate}
then the concatenation $\alpha_1 \ast \gamma\vert_{u_1 u_2}\ast \alpha_2$ is a $(\la', \ka)$--quasi-geodesic, where $\la' = \max \left(  \frac{\la +1 }{ 1- 2\theta} , 2\la +1 \right).$
\end{lemma}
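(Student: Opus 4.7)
The plan is to parameterize each concatenation naturally---running $\alpha_1$ by arc length on $[0, L_1]$ from $z_1$ to $u_1$, with $L_1 := \dist(u_1, z_1)$, then continuing with the sub-quasi-geodesic $\gamma\vert_{u_1 u_2}$ shifted by $L_1$, and analogously for $\alpha_2$ on $[0, L_2]$ with $L_2 := \dist(u_2, z_2)$---and to verify the quasi-geodesic inequalities by casework on which sub-path contains each of the two chosen points. Cases where both points lie in the same sub-path are immediate from the fact that $\alpha_i$ is a geodesic and that sub-paths of a $(\la, \ka)$--quasi-geodesic are themselves $(\la, \ka)$--quasi-geodesics, so only the mixed cases require work.

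For the first claim, consider $p = \alpha_1(s)$ and $q \in \gamma\vert_{u_1 u_2}$. The upper bound on $\dist(p, q)$ follows from the triangle inequality through $u_1$ together with the quasi-geodesic property of $\gamma$. The crux of the lower bound is a \emph{detour through $z_1$}: condition~(1) gives $\dist(q, z_1) \geq L_1$, whence $\dist(p, q) \geq L_1 - s$ by triangle inequality. Combined with the elementary bound $\dist(p, q) \geq \dist(u_1, q) - (L_1 - s)$, an appropriate weighted sum of these two inequalities, together with the lower quasi-geodesic estimate for $\dist(u_1, q)$, yields the claimed $(2\la + 1, \ka)$--quasi-geodesic constants. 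The concatenation $\gamma\vert_{u_1 u_2} \ast \alpha_2$ is treated symmetrically, using condition~(1) at $z_2$.

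For the second claim, apply the same analysis to the three-piece concatenation $\alpha_1 \ast \gamma\vert_{u_1 u_2} \ast \alpha_2$. Mixed cases involving two adjacent pieces reduce to the first claim and contribute $2\la + 1$ to the maximum defining $\la'$. The only genuinely new case is $p \in \alpha_1$, $q \in \alpha_2$; here condition~(2) is decisive, giving $\dist(u_1, p) + \dist(u_2, q) \leq L_1 + L_2 \leq 2\theta \dist(u_1, u_2)$, so the reverse triangle inequality yields $\dist(p, q) \geq (1 - 2\theta) \dist(u_1, u_2)$. Bounding the parameter distance above by $(\la + 2\theta) \dist(u_1, u_2) + \la \ka$ via the quasi-geodesic property of $\gamma\vert_{u_1 u_2}$ produces the coefficient $(\la + 2\theta)/(1 - 2\theta)$, which is at most $(\la + 1)/(1 - 2\theta)$ since $2\theta < 1$. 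The proof is thus a careful bookkeeping exercise; the only non-routine ingredient is the $z_1$-detour exploiting condition~(1), which is what lets the appended geodesics avoid destroying the quasi-geodesic property.
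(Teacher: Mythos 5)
Your proposal is correct and follows essentially the same route as the paper's proof: the same natural (arc-length on the geodesic pieces, original parameter on $\gamma|_{u_1 u_2}$) parameterization, the same observation that the upper coarse-Lipschitz inequality is automatic, the same casework on which piece each point lies in, and the same use of condition~(1) (via $z_1$) to bound $\dist(p,q)$ from below by the geodesic piece's parameter length, leading to $\dist(u_1,q)\leq 2\dist(p,q)$ and hence the constant $2\la+1$; the $\alpha_1$-to-$\alpha_2$ case is also handled identically via condition~(2) and the reverse triangle inequality. The only cosmetic difference is that you extract $\dist(p,q)\geq L_1 - s$ directly from the reverse triangle inequality through $z_1$, whereas the paper phrases the same inequality as a short proof by contradiction.
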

\begin{proof}
Let $\eta$ denote the concatenation $\alpha_1 \ast \gamma\vert_{u_1 u_2}\ast \alpha_2$, where $\alpha_i$ are geodesics joining  $u_i$ and $z_i$, parameterized by their length, and let $\eta(s_i) = u_i$. By the triangular inequality, $\eta$ satisfies the quasi-geodesic upper inequality with constants $(\la , \ka )$. In what follows, we focus on the lower inequality. Consider two points $x = \eta(s_x)$ and $y= \eta(s_y)$, with $s_x \leq s_y$. If $x,y$ are both contained in one of the three (quasi)-geodesics composing $\eta$, then the lower inequality is satisfied.

Consider now the case  $x\in \alpha_1$ and $y\in \gamma\vert_{u_1 u_2}$ (the case $x\in \gamma\vert_{u_1 u_2}$ and  $y\in \alpha_2$ is similar).

We have that $\dist (x,y)\geq \dist (x,u_1)$, otherwise $\dist (z_1, y) < \dist (z_1, u_1)$, contradicting the second assumption. It follows that $\dist (x,y) \geq s_1-s_x = \dist (x,u_1)$, which implies that $\dist (y,u_1)\leq 2\dist (x,y)$. We can then write that
$$
s_y - s_1 \leq \la \dist (u_1, y) +\la\ka \leq  2\la \dist (x,y) +\la\ka,$$
and therefore that
$$
s_y - s_x = s_y - s_1 + s_1-s_x \leq 2\la \dist (x,y) +\la\ka + \dist (x,y) = (2\la +1) \dist (x,y) +\la\ka.     
$$
This concludes the case of a single projection.

Assume now  $x\in \alpha_1$, $y\in \alpha_2$. We have:
\begin{align*}
 \vert s_x - s_y \vert  &\leq  2\theta \dist (u_1, u_2) +\vert s_1 - s_2\vert \\ &\leq 2\theta \dist (u_1, u_2) + \la \dist (u_1, u_2) +\ka  = (2\theta  + \la ) \dist (u_1, u_2) +\ka.
\end{align*}

As $\dist (x,y)\geq (1- 2\theta ) \dist (u_1, u_2),$ we conclude that 
$$
\vert s_x - s_y \vert \leq \frac{2\theta  + \la }{ 1- 2\theta} \dist (x,y) +\la\ka \leq \frac{\la +1 }{ 1- 2\theta} \dist (x,y) +\la\ka.  
$$
\end{proof}

\begin{lemma}[Local quasi-geodesics close to quasi-geodesics are quasi-geodesics]\label{lem:local_QG_in_nbhd_is_QG}
    Let $X$ be a geodesic metric space. Then for all quasi-geodesic pairs $(\la, \ka), (\lao, \kao)$ and constants $r\geq 0$ there exists a scale $D$ and a quasi-geodesic pair $(\la', \ka')$ such that if a path $\pgot$ is contained in the $r$--neighbourhood of a $(\lao, \kao)$--quasi-geodesic $\gamma$ and is $D$--locally a $(\la, \ka)$--quasi-geodesic, then $\pgot$ is a $(\la', \ka')$--quasi-geodesic. 
\end{lemma}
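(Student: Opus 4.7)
The strategy is to project the picture to the real line via $\gamma$, where the desired local-to-global principle for quasi-geodesics is standard. I would define a coarse projection $\pi : \nn_r(\gamma) \to [0,T]$, where $\gamma : [0,T] \to X$ denotes an improved parametrisation of the global quasi-geodesic, by letting $\pi(x)$ be any parameter such that $\dist(x, \gamma(\pi(x))) \leq r$. A short calculation using the quasi-isometric embedding inequalities for $\gamma$ together with the triangle inequality yields
\[
\tfrac{1}{\lao}\dist(x_1, x_2) - \tfrac{\kao + 2r}{\lao} \;\leq\; |\pi(x_1) - \pi(x_2)| \;\leq\; \lao \dist(x_1, x_2) + \lao(\kao + 2r)
\]
for all $x_1, x_2 \in \nn_r(\gamma)$, so $\pi$ is a quasi-isometry with constants $(\lambda_\pi, \kappa_\pi)$ depending only on $\lao, \kao, r$.

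The upper quasi-geodesic inequality for $\pgot$ is then routine: subdividing any interval $[s_1,s_2]$ into pieces of length at most $D/2$, each a $(\la,\ka)$-quasi-geodesic by hypothesis, and summing, gives $\dist(\pgot(s_1),\pgot(s_2)) \leq \la''|s_1 - s_2| + \ka''$ with constants independent of $D$. For the lower inequality I would consider $f := \pi \circ \pgot : [a,b] \to \RR$. Composing the local $(\la,\ka)$-quasi-geodesic bounds for $\pgot$ with the two quasi-isometry inequalities for $\pi$ shows that $f$ is $D$-locally a $(\hat\la, \hat\ka)$-quasi-geodesic in $\RR$, with $(\hat\la, \hat\ka)$ depending on $(\la, \ka, \lao, \kao, r)$ but not on $D$.

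The decisive step is the classical fact that in a trivially hyperbolic space such as $\RR$ there exists a threshold $D_0 = D_0(\hat\la, \hat\ka)$ and constants $(\tilde\la, \tilde\ka)$ such that every $D_0$-local $(\hat\la, \hat\ka)$-quasi-geodesic is a global $(\tilde\la, \tilde\ka)$-quasi-geodesic. Taking $D \geq D_0$ in the hypothesis of the lemma, $f$ is a global quasi-geodesic in $\RR$, and pulling back through the quasi-isometry $\pi$ delivers the required lower quasi-geodesic inequality for $\pgot$. A small subtlety is that $\pi$ is only a set-theoretic choice and need not be continuous; this is handled by passing to a discrete sample $s_i := a + iD/2$ and arguing combinatorially on $\tau_i := f(s_i)$, using that the local $(\hat\la, \hat\ka)$-estimates force $|\tau_i - \tau_{i+2}|$ to be bounded below by a positive constant linear in $D$, whence a no-folding argument in $\RR$ gives $|\tau_0 - \tau_n|$ linear in $n$. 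The main obstacle is the bookkeeping required to ensure $D$ is large enough to absorb all the additive errors coming from the coarseness of $\pi$, the quasi-geodesic constants of $\gamma$, and the discretisation step; all of these are linear in the fixed data $\la, \ka, \lao, \kao, r$, so choosing $D$ as a sufficiently large multiple of $\la\ka \lao(\kao + r)$ suffices.
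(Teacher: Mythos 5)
Your high-level strategy—projecting $\pgot$ onto the parameter domain of $\gamma$ via a coarse nearest-point map, verifying the upper inequality directly, and reducing the lower inequality to a local-to-global statement in $\RR$—is close in spirit to the paper's proof, which also discretizes $\pgot$ at scale $D/2$ and tracks projections onto $\gamma$. The quasi-isometry estimates you give for $\pi$, the composed local quasi-geodesic estimates for $f = \pi \circ \pgot$, and the pullback step are all fine.

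The gap is in the claimed \emph{no-folding argument in $\RR$}. Sampling at spacing $D/2$ means the local quasi-geodesic estimate only constrains pairs $\tau_i, \tau_j$ with $|i-j| \leq 2$, and those constraints alone do \emph{not} force linear drift. For example, with $\hat\la = 2$, $\hat\ka = 0$, the period-four sequence
\[
\tau_0 = 0,\quad \tau_1 = D/4,\quad \tau_2 = -D/2,\quad \tau_3 = -D/4,\quad \tau_4 = 0,\ \ldots
\]
satisfies $|\tau_i - \tau_{i+1}| \in [D/4, D]$ and $|\tau_i - \tau_{i+2}| \in [D/2, 2D]$ for every $i$, yet $\tau_{4k} = 0$ for all $k$. (This pattern cannot actually arise from a $D$-local quasi-geodesic, but the obstruction lives at parameters \emph{between} the $s_i$, which your discretisation discards.) So the inference from ``$|\tau_i - \tau_{i+2}|$ bounded below'' to ``$|\tau_0 - \tau_n|$ linear in $n$'' is not valid as stated.

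There are two ways to close the gap. One is to invoke the classical local-to-global theorem for $\delta$-hyperbolic spaces (e.g.\ Bridson--Haefliger III.H.1.13) applied to $\RR$; in that formulation quasi-geodesics are not assumed continuous, so the ``subtlety'' you raise about $\pi$ being merely set-theoretic is not actually an obstruction and no discretisation patch is needed. The other, which is what the paper does, is to keep the continuous path $\pgot$ in play rather than only its sample: the paper proves by induction that the projection parameters $s_i$ increase, and the inductive step rules out a backtrack $s_{i-1} \leq s_{i+1} \leq s_i$ by using Lemma~\ref{lem:reverse_inclusion_QG_nbhd} to produce an \emph{intermediate} parameter $t \in [(i-1)D/2, iD/2]$ with $\pgot(t)$ uniformly close to $\gamma(s_{i+1})$, hence to $\pgot((i+1)D/2)$; since $|t - (i+1)D/2| \geq D/2$, this contradicts the local lower quasi-geodesic inequality for $\pgot$ once $D$ is large. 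That use of intermediate points is exactly what your discrete argument is missing.
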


\begin{proof}
Since the statement for paths $\pgot$ with unbounded domain follows directly from the statement for paths $\pgot$ with bounded domain, without loss of generality we can assume that the domain of $\pgot$ is bounded and equal to $[0, T]$ for some $T\geq 0$. By increasing $r$ a uniform amount (and possibly passing to a subsegment of $\gamma$) we can assume that the closest point projections of the endpoints of $\pgot$ on $\gamma$ are exactly the endpoints of $\gamma$.

Since $\pgot$ is a $D$--local quasi-geodesic, the map $\pgot$ satisfies the upper coarse Lipschitz inequality. We focus on the lower one. For each $i\geq 0$ where defined, let $x_i = \pgot\left(i \frac{D}{2}\right)$ and let $y_i = \gamma(s_i)$ be a closest point projection of $x_i$ onto $\gamma$. Since $\pgot$ is a quasi-geodesic at scale $D$, $\abs{s_{i+1}- s_{i}}\geq D/c - c$ for some constant $c$ depending on $r, (\la, \ka)$ and $ (\lao, \kao)$ but not on $D$. 

By the choice of parameterization, we can assume that $s_1 - s_0\geq 0$. We will show inductively that for large enough $D$, $s_{i+1} - s_i\geq 0$ for all $i$ and hence $s_{i+1}- s_i\geq D/c - c$, which concludes the proof since it yields a linear lower bound on the lower coarse Lipschitz inequality for $\pgot$.

Assume that $s_{i} - s_{i-1}\geq 0$. If $s_{i-1}\leq s_{i+1}\leq s_{i}$, then by Lemma~\ref{lem:reverse_inclusion_QG_nbhd}, there exists $(i-i)\frac{D}{2}\leq t \leq i\frac{D}{2}$ such that $\dist (\pgot(t), \gamma(s_{i+1}))\leq r'$, where $r'$ depends on $r$ but not on $D$. Consequently, $\dist (\pgot(t), \pgot((i+1)\frac{D}{2}))$ is bounded by a constant independent of $D$. For large enough $D$, this contradicts $\pgot$ being a $(\lao, \kao)$--quasi-geodesic at scale~$D$. 

Similarly, if $s_{i+1}\leq s_{i-1}\leq s_i$ we get a contradiction by swapping $(i-1)$ and $(i+1)$. Therefore, we have to have $s_{i+1}> s_i$, which concludes the induction and hence the proof.
\end{proof}

%%%%%%%%%%%%%%%%%%%%%%%%%%%%%%%%%%%%%%%%

\subsection{Combings}\label{sec:combings}
Consider a metric space $X$ and two constants $\lao \geq 1$ and $\kao \geq 0$. 
A \emph{$(\lao,\kao)$--quasi-geodesic combing} is a way of assigning to  every ordered pair of points $(x, y)\in X\times X$, a $(\lao,\kao)$--quasi-geodesic $\col_{xy}$ connecting them. The quasi-geodesics $\col_{xy}$ are called \emph{combing lines}. When needed, we assume the  quasi-geodesics to be extended to $\R$ by constant maps. 

We say that a $(\lao, \kao)$--quasi-geodesic combing is \emph{bounded} if 
\begin{equation*}\label{eq:dist1}
 \dist (\col_{xy_1}(t), \col_{xy_2}(t))\leq \kao\dist (y_1,y_2)+\kao,    
\end{equation*} for all $t\in \R$ and $x, y_1,y_2$ in $X$.

%%%%%%%%%%%%%%%%%%%%%%%%%%%%%%%%%%%%%%%%%%%%%%%
\subsection{Morse and weak Morse property}
The main focus of our paper is the exploration of the weak Morse property from the perspective of local-to-global phenomena. We start from the definition of weak Morse property. This should be thought of as a form of convexity with respect to only a given type of quasi-geodesics.
\begin{definition}[Weak Morse]\label{defn:weak_Morse}
    Let $(Q, q)$ be a quasi-geodesic pair and let $\wmorse \geq 0$. A quasi-geodesic $\gamma$ is {\emph{}}$(Q, q, \wmorse)$--weakly Morse if any $(Q,q)$--quasi-geodesic $\eta$ with endpoints $\gamma(s)$ and $\gamma(t)$ satisfies 
    \[
    \eta \subseteq \mc{N}_{\wmorse}(\gamma\vert_{[s, t]}).
    \]
\end{definition}

Note that a continuous quasi-geodesic is $(1,0,0)$--weakly Morse if it is convex. Often, however, it is useful to consider quasi-convexity with respect to \emph{all} quasi-geodesic pairs. We say that a \emph{Morse gauge} is a function $M \colon \mathbb{R}_{\geq1} \times \mathbb{R}_{\geq0} \to \mathbb{R}_{\geq 0}$, and we define Morse quasi-geodesics as follows. 

\begin{definition}[Morse quasi-geodesic]\label{defn:Morse_QG}
A quasi-geodesic $\gamma$ is \emph{$M$--Morse}, for a Morse gauge $M$, if any $(Q, q)$--quasi-geodesic $\eta$ with endpoints $\gamma(s)$ and $\gamma(t)$ satisfies 
    \[
    \eta \subseteq \mc{N}_{M(Q,q)}(\gamma\vert_{[s,t]}).
    \]
\end{definition}

In \cite{RussellSprianoTran:thelocal}, Russell, Tran and the second author started the study of spaces with the property that paths that are locally Morse quasi-geodesics are globally Morse quasi-geodesics. For an overview of the subject and examples, we refer the reader to the Appendix (Section~\ref{sec:survey}). 

\begin{definition}
    A path $\pgot \colon [a, b] \to X$ \emph{satisfies a property $(P)$ at scale $L$} if for every $t_1, t_2 \in [a,b]$ with $\vert t_1- t_2 \vert \leq L$ the restriction $\pgot\vert_{[t_1, t_2]}$ has the property $(P)$. The quantity $L$ is called the \emph{scale}.
\end{definition}
Sometimes we use the more suggestive expressions \emph{$\pgot$ $L$--locally satisfies $(P)$}, or \emph{$\pgot$ is $L$--locally $(P)$} to mean that $\pgot$ satisfies a property $(P)$ at scale $L$. A \say{local-to-global property} is a condition requiring that if a path is $L$--locally $(P)$ for $L$ large enough, then it is globally $(P')$. An inspiring example for the topic is a theorem of Gromov stating that a geodesic metric space is hyperbolic if and only if all of its local quasi-geodesics are global quasi-geodesics \cite{Gromov(1983)}.

\begin{definition}[Weak MLTG property]\label{defn:wmltg}
 A metric space $X$ satisfies the \emph{weak Morse local-to-global} (\emph{WMLTG} for short) \emph{property} if the following holds. For all quasi-geodesic pairs $(\la, \ka)$ and $ (Q,q)$, and every Morse gauge $M$ there exists a scale $L$, a quasi-geodesic pair $(\la', \ka')$ and a constant $\wmorse \geq 0$ such that every path that is $L$--locally an $M$--Morse $(\la, \ka)$--quasi-geodesic is globally a $(\la', \ka')$--quasi-geodesic that is $(Q,q, \wmorse)$--weakly Morse.    
\end{definition}

The definition of the Morse local-to-global property is very similar, but with the stronger conclusion of being Morse instead of weakly Morse. 
\begin{definition}[MLTG property]\label{defn:MLTG}
 A metric space $X$ satisfies the \emph{Morse local-to-global} (\emph{MLTG} for short) \emph{property} if the following holds. For every  quasi-geodesic pair $(\la, \ka)$ and Morse gauge $M$ there exists a scale $L$, a quasi-geodesic pair $(\la', \ka')$ and a Morse gauge $M'$ such that every path that is $L$--locally an $M$--Morse $(\la, \ka)$--quasi-geodesic is globally an $M'$--Morse $(\la', \ka')$--quasi-geodesic.
\end{definition}

In Section~\ref{sec:WMLTG}, we will show that under suitable compactness assumptions of the Morse boundary, the WMLTG property can be upgraded to the MLTG property. 

\subsection{Morse property via middle recurrence}

In practical use, there is a much more effective formulation of the Morse property in terms of paths of controlled length, instead of quasi-geodesics. The advantage is that controlling the length of a concatenation of paths in terms of the original length is easy, whereas determining whether a concatenation of quasi-geodesics is a quasi-geodesic is hard. 

In this subsection, we use the notion of middle recurrence to establish a lemma that explicitly relates the Morse property and the lengths of certain paths. The reader interested only in the broad lines of the proof can safely take Lemma~\ref{lemma:mid-recurrence-consequence'} for granted and skip the middle recurrence altogether, as it will not be used in the rest of the paper. Middle recurrence was introduced in \cite[Proposition 3.24]{DrutuMozesSapir} and \cite[Proposition 1]{DrutuMozesSapir-corr} and further studied in \cite{aougabdurhamtaylor:pulling}.

\begin{definition}[$\mathbf{t}$--middle]
Let $\gamma$ be a quasi-geodesic and $a,b\in \gamma$. For $\mathbf{t}\in \left( 0,\frac{1}{2} \right)$, the \emph{$\mathbf{t}$--middle} of $\gamma\vert_{ab}$ is the set of $x\in\gamma$ lying between $a,b$ such that  $\min \{ \dist (x,a), \dist (x,b)\} \geq \mathbf{t}\cdot \dist (a,b)$. We denote the $\mathbf{t}$--middle of $\gamma\vert_{ab}$ as $\gamma\vert_{\mathbf{t}\cdot ab}$. When $a,b$ are the endpoints of $\gamma$, we denote the $\mathbf{t}$--middle simply by $\gamma\vert_{\mathbf{t}}$.
\end{definition}

\begin{definition}
Let $\gamma$ be a quasi-geodesic and $\bt \in \left(0, \frac{1}{2} \right)$.
We say that $\gamma$ is \emph{$\bt$--middle recurrent} if there is a
function $\mf{m}_{\bt}\colon \mathbb{R}_{+} \to \mathbb{R}_{+}$  so that any path $\mf{p}$ with endpoints $a,b \in \gamma$ and $\length{\pgot}\leq c\cdot \dist (a,b)$ satisfies \[\pgot \cap \nn_{\mf{m}_{\bt}(c)}(\gamma\vert_{\mathbf{\bt}\cdot ab})\neq \emptyset. \] The function $\mf{m}_{\bt}$ is called the \emph{$\bt$--recurrence function} of $\gamma$.

We say that a quasi-geodesic $\gamma$ is \emph{middle recurrent} if it is \emph{$\bt$--middle recurrent} for some fixed $\bt \in \left(0, \frac{1}{2} \right)$. 
\end{definition}

Note that subpaths of a $\bt$--middle recurrent path are $\bt$--middle recurrent with respect to the same recurrence function.

The relevance of middle recurrence is established in the following theorem.
\begin{theorem} [\cite{DrutuMozesSapir,DrutuMozesSapir-corr,aougabdurhamtaylor:pulling}]\label{thm:middleandmorse}
Let $\gamma$ be a quasi-geodesic in a geodesic metric space $X$. Then $\gamma$ is Morse if and only if it is middle recurrent. Moreover, its recurrence function can be bounded from above only in terms of its Morse gauge, and vice versa.
\end{theorem}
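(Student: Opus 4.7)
The plan is to prove the two implications separately, with contrasting strategies. The forward direction converts a short path into a genuine quasi-geodesic and then applies the Morse property together with Lemma~\ref{lem:reverse_inclusion_QG_nbhd} to transfer containment back to $\gamma$. The converse uses a one-shot contradiction argument: if the farthest point of an arbitrary quasi-geodesic $\eta$ from $\gamma$ were far, a suitable sub-arc of $\eta$ would give a path of linearly controlled length whose middle-recurrence point cannot lie where it is forced to.

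For ($\Rightarrow$), assume $\gamma$ is $M$--Morse and let $\pgot$ be a path with endpoints $a, b \in \gamma$ and $\length{\pgot} \leq c \cdot \dist(a,b)$. I would sample $\pgot$ at unit arclength to obtain at most $c \cdot \dist(a,b) + 1$ points, then iteratively prune: whenever two samples $y_i, y_j$ with $\vert i - j \vert$ large lie within a uniformly bounded distance of each other, delete the intervening detour. After rejoining consecutive surviving samples by geodesics, the result is an improved $(\la',\ka')$--quasi-geodesic $\eta$ from $a$ to $b$ whose constants depend only on $c$ (this is where the linear length bound is crucial). By the Morse property, $\eta \subseteq \mc{N}_{M(\la',\ka')}(\gamma\vert_{ab})$, and Lemma~\ref{lem:reverse_inclusion_QG_nbhd} then supplies the reverse inclusion $\gamma\vert_{ab} \subseteq \mc{N}_R(\eta)$ for $R$ depending only on $c$ and $M$. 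In particular $\gamma\vert_{\bt \cdot ab}$ sits in the $R$--neighbourhood of $\eta \subseteq \pgot$, producing the desired recurrence function $\mf{m}_\bt(c)$ depending only on $M$ and $\bt$.

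For ($\Leftarrow$), let $\eta$ be an improved $(Q,q)$--quasi-geodesic with endpoints $a, b \in \gamma$ and set $D = \sup_{x \in \eta} \dist(x, \gamma\vert_{ab})$. Fix a small $\epsilon > 0$ depending on the constants $k_1, k_2$ of Lemma~\ref{lemma:tamingqg}, and suppose for contradiction that $D$ is very large relative to $\mf{m}_\bt$. Choose $x^* \in \eta$ with $\dist(x^*, \gamma\vert_{ab}) \geq D/2$, and let $p, q \in \eta$ be the latest and earliest points on either side of $x^*$ with $\dist(p, \gamma\vert_{ab}), \dist(q, \gamma\vert_{ab}) \leq \epsilon D$. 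Then every interior point of $\eta\vert_{pq}$ lies outside $\mc{N}_{\epsilon D}(\gamma\vert_{ab})$. Let $p^*, q^* \in \gamma\vert_{ab}$ be closest points to $p, q$, and form the path $\pgot$ by concatenating the geodesic $p^* \to p$, the subpath $\eta\vert_{pq}$, and the geodesic $q \to q^*$. Using Lemma~\ref{lemma:tamingqg} together with $\dist(p, x^*), \dist(q, x^*) \geq D/2 - \epsilon D$, one verifies that $\length{\eta\vert_{pq}}$ and $\dist(p^*, q^*)$ are comparable of order $D$, giving $\length{\pgot} \leq c \cdot \dist(p^*, q^*)$ for $c = c(Q, q, \epsilon)$. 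Middle recurrence applied to $\pgot$ produces a point in $\pgot \cap \mc{N}_{\mf{m}_\bt(c)}(\gamma\vert_{\bt \cdot p^* q^*})$; for $D$ large the tails of length $\epsilon D$ are too short to reach $\gamma\vert_{\bt \cdot p^* q^*}$, so the hit point lies on $\eta\vert_{pq}$, forcing $\mf{m}_\bt(c) > \epsilon D$. This bounds $D$ in terms of $(Q, q)$ and $\mf{m}_\bt$, providing a Morse gauge.

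The main obstacle is the bookkeeping of constants, particularly in ($\Rightarrow$) where the pruning step must genuinely produce a quasi-geodesic whose constants depend only on $c$ and not on $\length{\pgot}$; the linear hypothesis $\length{\pgot} \leq c \cdot \dist(a,b)$ is essential. In ($\Leftarrow$) the calibration of $\epsilon$ must simultaneously ensure that the tails fall short of the $\bt$--middle of $\gamma\vert_{p^* q^*}$ and that the interior of $\eta\vert_{pq}$ avoids the recurrence neighbourhood; both requirements are manageable by choosing $\epsilon$ small relative to $k_1$ and $\bt$. The \emph{Moreover} clause on effective dependence then follows by tracking constants through the construction.
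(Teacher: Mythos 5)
This theorem is cited from \cite{DrutuMozesSapir,DrutuMozesSapir-corr,aougabdurhamtaylor:pulling}; the paper does not give its own proof, so there is no internal argument to compare against. I am therefore evaluating your proposal on its own terms.

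Your treatment of the converse, middle recurrence implies Morse, is essentially correct. Calibrating $\epsilon$ against $Q$, $q$, $k_1$ and $\bt$, forming $\pgot = [p^*,p]\ast\eta\vert_{pq}\ast[q,q^*]$, checking $\length{\pgot}\le c\,\dist(p^*,q^*)$ for $D$ large, and observing that the tails are too short to reach the $\bt$--middle of $\gamma\vert_{p^*q^*}$ while $\eta\vert_{pq}$ misses its $\epsilon D$--neighbourhood, yields a bound on $D$ in terms of $\mf m_\bt$, $\bt$, $Q$, $q$. Quantitative dependence of the Morse gauge on the recurrence data falls out of the same computation.

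The forward direction, however, has a genuine gap in the reduction step. You want to replace the controlled-length path $\pgot$ by a quasi-geodesic $\eta$ with constants depending only on $c$, so that the Morse property of $\gamma$ and Lemma~\ref{lem:reverse_inclusion_QG_nbhd} can be applied. Two problems. First, the clause ``$\eta\subseteq\pgot$'' is false as stated: $\eta$ is built from geodesics joining surviving samples, which need not lie in the image of $\pgot$. What the rest of the argument actually requires is the one-sided containment $\eta\subseteq\mc N_R(\pgot)$ with $R = R(c)$. Second, and more seriously, the pruning rule you describe, which deletes detours whenever two samples $y_i,y_j$ with $\lvert i-j\rvert$ large are within a fixed bound of each other, does not deliver the quasi-geodesic lower bound. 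At the end of the pruning one knows that far-separated indices give points at distance larger than some constant $B$, but a quasi-geodesic lower bound requires a \emph{linear-in-$\lvert i-j\rvert$} separation, not a constant one, and the linear hypothesis $\length{\pgot}\le c\,\dist(a,b)$ only controls the total number of samples, not the growth of pairwise distances. The underlying assertion, that every $c$--efficient path from $a$ to $b$ has a nearby quasi-geodesic from $a$ to $b$ with constants depending only on $c$, is not obvious and is certainly not established by the sketch; indeed the analogous Hausdorff-distance statement fails already in $\RR^2$ (a spike $(0,0)\to(1,H)\to(1,0)\to(L,0)$ with $H\sim(c-1)L/2$ is $c$--efficient but not within bounded Hausdorff distance of any quasi-geodesic with constants depending only on $c$). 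To repair the forward direction you would need either a careful proof of the one-sided containment claim, or a different argument altogether — for instance a direct contradiction mirroring your converse: assume $\pgot$ stays $D$--far from $\gamma\vert_{\bt\cdot ab}$, extract from it a quasi-geodesic detour with endpoints on $\gamma$ that escapes to distance comparable to $D$, and apply the Morse property to that detour. This is closer in spirit to the arguments in the cited references, which run through divergence/recurrence characterisations rather than a ``tame the path into a quasi-geodesic'' step.
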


For our goals, it will suffice to restate Theorem~\ref{thm:middleandmorse} in the following form that, as mentioned, does not in fact involve the definition of middle recurrence. 

\begin{lemma}\label{lemma:mid-recurrence-consequence'}
    Let $M$ be a Morse gauge and let $(\la, \ka)$ be a  quasi-geodesic pair. Let $\K, \sigma, \delta$ be linear functions. Then there exists a constant $D_0\geq 0$ such that for all $D\geq D_0$ there exists a constant $\ell_0\geq 0$ such that for all $\ell\geq \ell_0$ the following holds. If $\gamma$ is a $(\la, \ka)$--quasi-geodesic which is $M$--Morse, then there does not exist a path $\pgot$ with endpoints $\gamma(t_1)$ and $\gamma(t_2)$ satisfying the following:
    \begin{enumerate}
        \item $\ell\leq t_2 - t_1 \leq \sigma(\ell)$,\label{C1'}
        \item $\length{\pgot}\leq \K(\ell)$,\label{C2'}
        \item $\dist (\pgot, \gamma \vert_{[t_1 + \delta(D), t_1 + \ell - \delta(D)]})\geq D $.\label{C3'}
    \end{enumerate}
\end{lemma}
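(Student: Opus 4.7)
The plan is to argue by contradiction using Theorem~\ref{thm:middleandmorse}. Suppose all three conditions hold for some $\pgot$. A naive application of middle recurrence to $\gamma\vert_{[t_1,t_2]}$ is insufficient, since its ``middle'' is controlled by $t_2-t_1$, which can be as large as $\sigma(\ell)$, while condition (3) only controls a sub-interval of parameter-length $\ell-2\delta(D)$. I would therefore close $\pgot$ off with the $\gamma$-tail and apply middle recurrence to the shorter subsegment $\gamma\vert_{[t_1,t_1+\ell]}$: set $\pgot':=\pgot\ast(\gamma\vert_{[t_1+\ell,t_2]})^{-1}$, a path from $\gamma(t_1)$ to $\gamma(t_1+\ell)$. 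Since $\gamma$ is improved (Lemma~\ref{lemma:tamingqg}, item (3)), $\length{\pgot'}$ is bounded by a linear function of $\ell$ whose coefficients depend only on $\K,\sigma,\lambda,\kappa$.

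Next I would fix $\bt\in(0,1/2)$ small enough (depending on $\lambda$) for the $\bt$-middle of $\gamma\vert_{[t_1,t_1+\ell]}$ to be non-empty and to have $s$-range of length at least a positive fraction of $\ell$ for all large $\ell$. Since $\gamma\vert_{[t_1,t_1+\ell]}$ is $M$-Morse as a restriction, Theorem~\ref{thm:middleandmorse} yields a recurrence function $\mf{m}_\bt$ bounded in terms of $M,\lambda,\kappa,\bt$ alone. The ratio $c:=\length{\pgot'}/\dist(\gamma(t_1),\gamma(t_1+\ell))$ is bounded above by a constant $C=C(\lambda,\kappa,\K,\sigma)$ for $\ell$ large, using the previous length estimate and $\dist(\gamma(t_1),\gamma(t_1+\ell))\geq\ell/\lambda-\kappa$. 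Hence $\pgot'$ meets the $K_1$-neighbourhood of the $\bt$-middle, where $K_1:=\mf{m}_\bt(C)$ is independent of $D$ and $\ell$.

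Now I would pick $D_0$ so that $D_0>K_1$ and $\delta(D_0)>\lambda(K_1+\kappa)$; then, given $D\geq D_0$, choose $\ell_0$ linear in $D$ guaranteeing both $c\leq C$ and the inclusion of the $\bt$-middle of $\gamma\vert_{[t_1,t_1+\ell]}$ into the parameter range $[t_1+\delta(D),\,t_1+\ell-\delta(D)]$. The latter is a direct quasi-geodesic computation: every $\gamma(s)$ in the middle satisfies $s-t_1\geq\bt\ell/\lambda^2-(\bt+1)\kappa/\lambda$ (and symmetrically on the other end), which exceeds $\delta(D)$ once $\ell$ is linear in $D$. If $z\in\pgot'$ witnesses middle recurrence with $\dist(z,y)\leq K_1$ for some $y$ in the middle, and if $z$ lay in the tail $\gamma\vert_{[t_1+\ell,t_2]}$, the quasi-geodesic inequality would force $\dist(z,y)\geq\delta(D)/\lambda-\kappa>K_1$ by the choice of $D_0$, which is absurd. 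Therefore $z\in\pgot$, giving $\dist(\pgot,\gamma\vert_{[t_1+\delta(D),\,t_1+\ell-\delta(D)]})\leq K_1<D$, contradicting (3).

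The hard part will be the quantifier juggling. We need $D_0$ to not depend on $\ell$, while $\ell_0$ is allowed to grow linearly with $D$. The key observation which makes this possible is that the middle-recurrence input $c$ stabilises for $\ell$ large to a value depending only on $\K,\sigma,\lambda,\kappa$, which is precisely why $K_1$, and hence $D_0$, can be chosen once and for all.
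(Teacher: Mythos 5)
Your proposal is correct and follows essentially the same route as the paper's proof: append the $\gamma$-tail $\gamma\vert_{[t_1+\ell,\,t_2]}$ (reversed) to $\pgot$ to form a path from $\gamma(t_1)$ to $\gamma(t_1+\ell)$ with length linear in $\ell$; invoke Theorem~\ref{thm:middleandmorse} to produce a constant $\tgot$ and recurrence function $\mf m_\tgot$ depending only on $M$; observe that the ratio $c$ stabilises (hence $\mf m_\tgot(c)$ is an absolute constant) so that $D_0$ can be fixed in advance; and finally enlarge $\ell$ so that the $\tgot$-middle of $\gamma\vert_{[t_1,\,t_1+\ell]}$ sits inside the excluded parameter range. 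This is precisely what the paper does.

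The one point you handle a bit differently is the possibility that the middle-recurrence witness lands on the appended tail rather than on the original $\pgot$. You rule this out by requiring $\delta(D_0)>\lambda(K_1+\kappa)$, whereas the paper instead replaces $\delta$ by a larger linear function $\delta'=\delta'(\delta,\lambda,\kappa)$ so that the augmented path satisfies the exclusion condition with $\delta'$, and then proves the middle is contained in the $\delta'$-shrunk interval. Your version is fine when $\delta$ has positive slope (which it does in every use in this paper), but strictly as stated it could fail if $\delta$ were a small constant, since then $\delta(D_0)>\lambda(K_1+\kappa)$ is unachievable. Even in that edge case the conclusion still holds, because the gap in parameter between the tail and the $\tgot$-middle is not $\delta(D)$ but rather of order $\tgot\ell/\lambda^2$, which grows with $\ell$ and eventually exceeds $K_1$; switching to that bound removes the reliance on the slope of $\delta$ and would make your argument fully robust.

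Two tiny clean-ups: you cannot choose $\bt$ freely from Theorem~\ref{thm:middleandmorse} — the theorem hands you a $\tgot$ depending on $M$ — although you may always pass to a smaller $\bt$, since the $\bt$-middle shrinks monotonically in $\bt$; and when bounding $c$ you should use the length of $\gamma\vert_{[t_1+\ell,\,t_2]}$, which condition (1) plus the improved-quasi-geodesic bound controls by a linear function of $\sigma(\ell)$, not of $t_2-t_1$ directly. Neither affects the correctness of the argument.
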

\begin{proof}
Assume by contradiction that there exists a path $\pgot$ satisfying \eqref{C1'}, \eqref{C2'} and \eqref{C3'}. Let $a = \gamma(t_1)$ and $b = \gamma(t_1 + \ell)$. Modify $\pgot$ by appending the segment $\gamma^{-1}\vert_{[b, \gamma(t_2)]}$. Note that the new path $\pgot$ satisfies Condition~\eqref{C3'} for a linear function $\delta' = \delta' (\delta, \la, \ka)$. By Lemma~\ref{lemma:tamingqg}, we have that the new path $\pgot$ satisfies $\length{\pgot}\leq \K'(\ell)$ for a linear function $\K'= \K'(\K, \sigma, \la, \ka)$. Thus there exist $\ell_1,c$ only depending on $\K', \la, \ka$ so that if $t_2 - t_1\geq \ell_1$, we have $\length{\pgot}\leq c \cdot \dist (a, b)$. According to 
Theorem~\ref{thm:middleandmorse}, there exist $\tgot\in (0, \frac{1}{2})$ and a middle recurrence $\mf{m}_{\tgot}$, both depending only on $M$, such that $\gamma$ is $\mf{m}_{\tgot}$--middle recurrent. Thus, $\pgot$ intersects the $\mf{m}_{\tgot}(c)$--neighbourhood of the $\tgot$--middle $\gamma_{\tgot\cdot ab}$. Choose $D_0$ as $\mf{m}_\tgot(c)  + 1$ and let $D\geq D_0$. We'll show that for $\ell$ large enough we have $\gamma \vert_{\tgot \cdot ab} \subseteq \gamma \vert_{[t_1 + \delta(D), t_1 + \ell - \delta(D)]}$, contradicting \eqref{C3'}. Indeed, and $\gamma \vert_{\tgot \cdot [a,b]}$ has a lower bound that grows linearly in $\ell$, whereas the Hausdorff distance between $a$ and $\gamma \vert_{[t_1, t_1 + \delta(D)]}$ has a uniform upper bound. A similar observation for $b$ and $\gamma \vert_{[t_1 + \ell -\delta(D), t_1 + \ell]}$ shows that for $D\geq D_0$ there exists $\ell_2 = \ell_2(D)$ so that for all $\ell\geq \ell_2$ it holds  $\gamma \vert_{\tgot \cdot ab} \subseteq \gamma \vert_{[t_1 + \delta(D), t_1 + \ell - \delta(D)]}$. Choosing $\ell_0 = \max\{\ell_1, \ell_2\}$ concludes the proof.
\end{proof}

\section{Globalization of quasi-geodesics}

\begin{lemma}\label{lem:Morse_for_combing_lines}
    Let $X$ be a geodesic metric space with a bounded $(\lao, \kao)$--quasi-geodesic combing. For every $\la\geq 1$ and $\ka \geq 0$, and for every $\wmorse\geq 0$ there exists $D\geq 0$ and $\mu'\geq 0 $ such that every path $\pgot$ that is $D$--locally a $(2\lao +2, \kao, \wmorse)$--weakly Morse $(\la,\ka)$--quasi-geodesic is contained in the $\mu'$--neighbourhood of the combing line between its endpoints. 
\end{lemma}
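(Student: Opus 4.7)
The plan is to argue by contradiction. Suppose that some $\pgot(r^*)$ lies outside the $\mu'$--neighborhood of $\col_{xy}$, and let $(s, t) \ni r^*$ be the maximal open subinterval on which $\dist(\pgot(r), \col_{xy}) > \mu''$ for a threshold $\mu'' < \mu'$ to be chosen. By continuity, $\dist(\pgot(s), \col_{xy}) = \dist(\pgot(t), \col_{xy}) = \mu''$ (or the distance is $0$ if $s = 0$ or $t = T$, a stronger condition).

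The key construction uses Lemma~\ref{lem:_wiggling_ends_of_QG} to produce a quasi-geodesic in the class that the weakly Morse property controls. Let $u_1, u_2 \in \col_{xy}$ be closest point projections of $\pgot(s), \pgot(t)$, ordered compatibly with the orientation of $\col_{xy}$, and define
\[
\beta \;:=\; [\pgot(s), u_1] \ast \col_{xy}\vert_{u_1 u_2} \ast [u_2, \pgot(t)].
\]
Hypothesis (1) of Lemma~\ref{lem:_wiggling_ends_of_QG} holds automatically by the closest point projection property. Provided $\max(\dist(\pgot(s), u_1), \dist(\pgot(t), u_2)) \leq \tfrac{1}{4}\dist(u_1, u_2)$, the lemma makes $\beta$ a $(2\lao + 2, \kao)$--quasi-geodesic, which is exactly the class for which $\pgot$ is locally weakly Morse. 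When $t - s \leq D$, this yields $\beta \subseteq \mc{N}_\wmorse(\pgot\vert_{[s, t]})$, and Lemma~\ref{lem:reverse_inclusion_QG_nbhd} in turn gives $\pgot\vert_{[s, t]} \subseteq \mc{N}_C(\beta)$ for a constant $C = C(\wmorse, \la, \ka, \lao, \kao)$. For $r$ sufficiently far in domain from both $s$ and $t$, the projection of $\pgot(r)$ onto $\beta$ must land on the central $\col_{xy}$--piece, so $\dist(\pgot(r), \col_{xy}) \leq C$; this contradicts $\dist(\pgot(r), \col_{xy}) > \mu''$ as soon as $\mu'' > C$.

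The main obstacle is the quantitative bootstrap between $\mu'$ and $D$. The $\tfrac{1}{4}$--wiggling condition amounts to $\dist(\pgot(s), \pgot(t)) \geq 6\mu''$, which via the local lower quasi-geodesic inequality forces $t - s \gtrsim 6\la\mu''$, so $D$ must be taken large compared to $\mu''$. Moreover, when $t - s > D$ one applies the argument not to the endpoints of the bad interval but to a subinterval $[r_1, r_2] \subseteq (s, t)$ of length exactly $D$ placed near $r^*$, accepting that the endpoint-projection distances $\dist(\pgot(r_i), u_i')$ are then bounded above by the excursion depth $\mu^* := \sup_r \dist(\pgot(r), \col_{xy})$ rather than by $\mu''$. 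The resulting wiggling condition reads $\mu^* \lesssim D/\la$, so consistency is obtained by selecting the constants in the order $C \to \mu'' := C + 1 \to \mu' \to D$, with $D$ chosen large enough compared to $\mu'$. The delicate technical step is to verify that this bootstrap actually rules out an unbounded excursion depth $\mu^*$, rather than merely reducing a finite one; this is where the finiteness of $\pgot$'s domain, together with the local Lipschitz estimate $|\dist(\pgot(r), \col_{xy}) - \dist(\pgot(r'), \col_{xy})| \leq \la|r - r'| + \ka$, enters to launch the iteration.
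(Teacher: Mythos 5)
Your core geometric idea matches the paper's: project two nearby points of $\pgot$ to the combing line, wiggle the ends via Lemma~\ref{lem:_wiggling_ends_of_QG} to build a $(2\lao+2,\kao)$--quasi-geodesic, apply the local weak Morse property, and invert with Lemma~\ref{lem:reverse_inclusion_QG_nbhd}. However, there is a genuine gap exactly at the point you flag yourself: nothing in your argument bounds the excursion depth $\mu^* = \sup_r \dist(\pgot(r), \col_{xy})$, and without such a bound the wiggling hypothesis $\dist(\pgot(r_i), u_i) \leq \tfrac14 \dist(u_1,u_2)$ cannot be verified, since $\dist(u_1,u_2) \lesssim D/\la$ while $\dist(\pgot(r_i), u_i)$ could be as large as $\mu^*$. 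Finiteness of the domain only gives that $\mu^*$ is \emph{some} finite number, not a number comparable to $D$, and the local Lipschitz estimate for $r \mapsto \dist(\pgot(r), \col_{xy})$ only tells you the height changes slowly — it does not give you an a priori ceiling from which to ``launch'' a useful iteration. Concretely, if $\mu^* \gg D$, the window near the deepest point has both endpoints at height $\approx \mu^*$, the wiggling condition fails, and the argument does not get off the ground.

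The crucial ingredient you are missing is the \emph{bounded combing} hypothesis, which your proof never invokes — and the statement is false without it (see the discussion of $X = \vee_n (S^1, nd)$ after Theorem~\ref{introthm:globalization_qg:v2}). The paper uses it as follows: instead of working only with the combing line $\col_{xy}$ between the endpoints, consider the family $\qgot_i$ of combing lines from $\pgot(0)$ to $\pgot(i)$, $i = 0,1,\dots,\lceil a\rceil$. By boundedness, $\qgot_i$ and $\qgot_{i+1}$ lie within uniform Hausdorff distance $O(1)$ of each other, and $\pgot\vert_{[0,0]}$ is trivially close to $\qgot_0$. A discrete intermediate-value (``coarse continuity'') argument over $i$ then locates the first index $i$ at which $\pgot\vert_{[0,i]}$ leaves the $\theta D$--neighbourhood of $\qgot_i$; at that moment the subpath is still within $\theta D + O(1)$ of $\qgot_i$. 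This is exactly the a priori cap $\mu^* \leq \theta D + O(1)$ on the excursion depth that your proposal lacks, and once it is in hand, the wiggling/weak-Morse/reverse-inclusion argument you outline goes through at the deepest point $\pgot(s)$ and yields the contradiction. To salvage your write-up you would need to replace the single-line bootstrap with this sweep over the family of combing lines.
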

\begin{proof}
    We can assume that $\pgot$ is parameterized as $\pgot \colon[0,a] \to X$ for some $a$. Let $k = \lceil a \rceil$ and let $\qgot_i$ be the combing line between $\pgot(0)$ and $\pgot(i)$ for $i <k$, and $\qgot_k$ be the combing line between $\pgot(0)$ and $\pgot (a)$. 

    Write $\mu' = \theta D$ and assume that  $\pgot$ is not in the $\theta D$--neighbourhood of $\qgot_k$. By a (coarse) continuity argument there exists $\qgot_i$ such that the corresponding subpath $\pgot\vert_{[0,i]}$ is  in the $\theta D + O(1)$--neighbourhood of $\qgot_i$ but not in the $\theta D$--neighbourhood of $\qgot_i$.     

We fix such a $\qgot_i$ and from now on we abuse notation and simply denote $\qgot_i$ and  $\pgot\vert_{[0,i]}$ as $\qgot$ and  $\pgot$, respectively. Let $\pgot(s)$ be a point at distance at least $\theta D$ from $\qgot$. Let $\rho \in (0,1/2)$ be another small constant to be determined. Let $t_1 = \max (0,  s -\rho D)$ and $t_2 = \min (a, s + \rho D)$. We can assume that $a\geq \rho D$ and hence $D\geq 2\rho D\geq t_2 - t_1\geq \rho D$.

For $i = 1, 2$, let $u_i\in \qgot $ be a closest point to $\pgot(t_i)$.  Since $\dist (u_i, \pgot(t_i)) \leq \theta  D + O(1)$, we have that 
\begin{align*}
   \dist(u_1, u_2)&\geq \dist (\pgot(t_1), \pgot(t_2)) - 2\theta D - 2\mathcal O(1)\geq \frac{(t_2 - t_1)}{\la} - \ka -  2\theta D - 2\mathcal O (1)\\
   &\geq \left ( \frac{\rho}{\lambda} - 2\theta\right) D - \ka - 2\mathcal O (1).
\end{align*}

Here we used that $\pgot$ is $D$--locally a $(\la, \ka)$--quasi-geodesic. Whence $\dist (u_i, z_i) \leq \frac{1}{6} \dist (u_1, u_2)$ if $\theta < \frac{\rho}{8\la}$ and $D$ is large enough. By Lemma \ref{lem:_wiggling_ends_of_QG}, it follows that if $\alpha_i$ are geodesics connecting $\pgot(t_i)$ to $u_i$, then the concatenation $\gamma = \alpha_1 \ast \qgot\vert_{u_1 u_2}\ast \alpha_2$ is a $(\lao', \kao)$-quasi-geodesic, where $\lao'= 2\lao +2$, joining two points on $\pgot$ at parameterized distance less than $D$, implying that $\gamma$ is in the $\wmorse$--neighbourhood of $\pgot \vert_{[t_1, t_2]}$. By Lemma~\ref{lem:reverse_inclusion_QG_nbhd}, we consequently have that $\pgot \vert_{[t_1,t_2]}$ and in particular $\pgot (s)$ is in the $r$--neighbourhood of $\gamma$ for some $r$ only depending on $\wmorse, \lao, \kao, \la$ and $\ka$. For $\mu' = \theta D$ larger than $r$, $\pgot(s)$ is not in the $r$ neighbourhood of $\alpha_1$ and $\alpha_2$ and hence not in the $r$--neighbourhood of $\qgot$, a contradiction.
\end{proof}

Combining Lemmas~ \ref{lem:reverse_inclusion_QG_nbhd}, \ref{lem:local_QG_in_nbhd_is_QG}, and \ref{lem:Morse_for_combing_lines} we obtain the following. 

\begin{theorem}\label{thm:local-to-global-qg}
Let $X$ be a geodesic metric space with a bounded $(\lao, \kao)$--quasi-geodesic combing.
For every quasi-geodesic pair $(\la, \ka)$ and $\wmorse \geq 0$, there exists a scale $D\geq 0$, and quasi-geodesic constants $(\la', \ka')$ such that every path $\pgot$ that $D$--locally is a $(2\lao +2, \kao, \mu)$--weakly Morse $(\la,\ka)$--quasi-geodesic, is a global $(\la',\ka')$--quasi-geodesic.

Moreover, for every pair of points $u$ and $v$ in the image of $\pgot$, the subpath $\pgot|_{uv}$ and the combing line $\col_{uv}$ are within Hausdorff distance at most $\mu''  = \mu'' (\la, \ka, \lao, \kao, \mu)$ of each other. 
\end{theorem}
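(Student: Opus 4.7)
My plan is to chain the three preceding lemmas, with essentially no additional geometric work beyond bookkeeping of constants. First, I will apply Lemma~\ref{lem:Morse_for_combing_lines} with input $(\la, \ka, \mu)$ to obtain a scale $D_1$ and a constant $\mu'$ such that any path $\pgot$ satisfying the local hypothesis is contained in the $\mu'$-neighbourhood of the combing line joining its endpoints. I will then feed this containment into Lemma~\ref{lem:local_QG_in_nbhd_is_QG}, applied with $r = \mu'$ and the two quasi-geodesic pairs $(\la, \ka)$ and $(\lao, \kao)$, producing a scale $D_2$ and quasi-geodesic constants $(\la', \ka')$ under which $\pgot$ becomes a global $(\la', \ka')$-quasi-geodesic. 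Setting $D := \max(D_1, D_2)$ and observing that being $D$-locally $(2\lao+2,\kao,\mu)$-weakly Morse is a stronger hypothesis than being $D_i$-locally so for $i = 1, 2$, yields the first assertion of the theorem.

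For the \emph{moreover} clause, I observe that the hypothesis of being $D$-locally a $(2\lao+2, \kao, \mu)$-weakly Morse $(\la, \ka)$-quasi-geodesic is inherited by every subpath $\pgot|_{uv}$, since it is defined purely in terms of restrictions of domain length at most $D$. Hence the argument above, applied to $\pgot|_{uv}$ in place of $\pgot$, already gives $\pgot|_{uv}\subseteq \mc{N}_{\mu'}(\col_{uv})$. For the reverse inclusion I will use Lemma~\ref{lem:reverse_inclusion_QG_nbhd}: both $\pgot|_{uv}$ and $\col_{uv}$ are quasi-geodesics sharing the two endpoints $u, v$, so after upgrading both to a common pair of constants $(\max(\la', \lao), \max(\ka', \kao))$, that lemma, applied with endpoint-distance $d = 0$ and parameter $\mu'$, produces a constant $\mu''$, depending only on $\la, \ka, \lao, \kao, \mu$, such that $\col_{uv}\subseteq \mc{N}_{\mu''}(\pgot|_{uv})$. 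Enlarging $\mu''$ to exceed $\mu'$ yields a two-sided Hausdorff bound of $\mu''$.

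There is no real conceptual obstacle: the substantive geometry has already been packaged inside the three preceding lemmas, and their outputs dovetail almost verbatim. The one point requiring a moment's attention is verifying that the constant $\mu'$ produced by Lemma~\ref{lem:Morse_for_combing_lines} depends only on the quasi-geodesic and weak-Morse parameters, and not on the length or identity of $\pgot$; this uniformity is exactly what allows the same estimate to be applied simultaneously to every subpath $\pgot|_{uv}$ in the proof of the \emph{moreover} statement.
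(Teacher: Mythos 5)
Your proposal is correct and is exactly the chaining the paper intends when it says ``Combining Lemmas~\ref{lem:reverse_inclusion_QG_nbhd}, \ref{lem:local_QG_in_nbhd_is_QG}, and \ref{lem:Morse_for_combing_lines}'': Lemma~\ref{lem:Morse_for_combing_lines} to trap $\pgot$ near $\col_{\pgot(0)\pgot(a)}$, Lemma~\ref{lem:local_QG_in_nbhd_is_QG} (with $r=\mu'$) to promote the local quasi-geodesic to a global one, and Lemma~\ref{lem:reverse_inclusion_QG_nbhd} (applied to each subpath $\pgot|_{uv}$, whose local hypotheses are inherited from $\pgot$) for the two-sided Hausdorff bound. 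Your final remark that $\mu'$ is uniform in $\pgot$ is indeed the only point worth flagging, and it holds since $\mu'$ in Lemma~\ref{lem:Morse_for_combing_lines} depends only on $(\la,\ka,\lao,\kao,\mu)$.
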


\section{Combings and the weak Morse-local-to-global property}
The main goal of this section is to prove Theorem~\ref{thm:mltg}, namely showing that a metric space equipped with a bounded combing satisfies the weak Morse local-to-global property. 

We begin by a few technical lemmas.

\begin{lemma}\label{lemma:quasi-geodesics_stay}
Let $(\la, \ka)$ be a quasi-geodesic pair. There exists a linear function $\delta$ such that the following holds. Let $\gamma_1 : [0, T_1]\to X, \gamma_2 : [ 0 , T_2]\to X$ be $(\la, \ka)$-quasi-geodesics such that $\gamma_2\subseteq \nn_D (\gamma_1)$.

 Assume that $0\leq t_1\leq s_1\leq T_1$ and $0\leq t_2\leq s_2\leq T_2 $ are such that the points $x_i = \gamma_i(t_i)$ and $ y_i = \gamma_i(s_i), i=1,2,$ satisfy $\dist (x_1, x_2)\leq D$ and $\dist (y_1, y_2)\leq D$. Then $\dist (\gamma_2\vert_{[t_2, s_2]}, \gamma_1(t)) > D$ for all $t\in[0, T_1]\setminus [t_1 -\delta(D), s_1+\delta(D)]$. 
\end{lemma}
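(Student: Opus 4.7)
The plan is to prove the contrapositive: any $t \in [0, T_1]$ with $\dist(\gamma_1(t), \gamma_2|_{[t_2, s_2]}) \leq D$ lies in $[t_1 - \delta(D), s_1 + \delta(D)]$. By symmetry it suffices to bound $t - s_1$ when $t > s_1$, so fix $\tau \in [t_2, s_2]$ with $\dist(\gamma_1(t), \gamma_2(\tau)) \leq D$. If either $s_1 - t_1$ or $t - s_1$ is already bounded by a constant multiple of $D + \ka$ (depending on $\la$), a direct triangle-inequality chain through $x_1, x_2, \gamma_2(\tau)$ and an application of the quasi-geodesic inequalities gives the desired bound, so assume both $s_1 - t_1$ and $t - s_1$ exceed such a constant multiple of $D + \ka$.

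The crucial step is a coarse intermediate-value argument producing a parameter $\sigma^- \in [t_2, \tau]$ with $\gamma_2(\sigma^-)$ close to $y_1$. For each $\sigma \in [t_2, \tau]$ the hypothesis $\gamma_2 \subseteq \nn_D(\gamma_1)$ supplies $p(\sigma) \in [0, T_1]$ with $\dist(\gamma_2(\sigma), \gamma_1(p(\sigma))) \leq D$; chaining the triangle inequality with the quasi-geodesic inequalities for $\gamma_1$ and $\gamma_2$ yields a Lipschitz-with-error bound of the form $|p(\sigma_1) - p(\sigma_2)| \leq \la^2|\sigma_1 - \sigma_2| + 2\la(D + \ka)$. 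Since $p(t_2)$ and $p(\tau)$ lie within $O(D)$ of $t_1$ and $t$ respectively, and $s_1$ sits strictly between them with room to spare by the case assumption, setting $\sigma^- := \sup\{\sigma \in [t_2, \tau] : p(\sigma) \leq s_1\}$ and taking limits from each side forces $|p(\sigma^-) - s_1| \leq 2\la(D + \ka)$. Applying the quasi-geodesic upper inequality for $\gamma_1$ to $\gamma_1(p(\sigma^-))$ and $y_1$ then yields $\dist(\gamma_2(\sigma^-), y_1) \leq c_1(D + \ka)$ for a constant $c_1$ depending only on $(\la, \ka)$.

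To conclude, apply the quasi-geodesic inequality for $\gamma_2$ to the ordered triple $\sigma^- \leq \tau \leq s_2$. Since $\gamma_2(\sigma^-)$ and $y_2 = \gamma_2(s_2)$ each lie within $c_1(D + \ka)$ of $y_1$, they are within $2c_1(D + \ka) + D$ of each other, and the lower QG bound gives $s_2 - \sigma^- \leq c_2(D + \ka)$; the upper QG bound on $[\sigma^-, \tau]$ then yields $\dist(\gamma_2(\sigma^-), \gamma_2(\tau)) \leq c_3(D + \ka)$. Combining this with the lower bound $\dist(\gamma_2(\sigma^-), \gamma_2(\tau)) \geq (t - s_1)/\la - c_4(D + \ka)$, obtained from the triangle inequality and the lower QG bound on $\gamma_1$, forces $t - s_1 \leq \la(c_3 + c_4)(D + \ka)$. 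This bound is linear in $D$ with coefficients depending only on $(\la, \ka)$, so we can read off the required linear function $\delta$.

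The main technical obstacle is the coarse intermediate-value step, since the projection $\sigma \mapsto p(\sigma)$ need not be continuous. The Lipschitz-with-error bound controls the size of its possible jumps, and combined with the genuine continuity of $\gamma_2$ (by our improved quasi-geodesic convention) it allows the supremum argument above to succeed precisely because $s_1$ lies well inside the range of values swept by $p$, which is exactly the content of the preliminary case reduction.
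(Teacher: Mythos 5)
Your proof is correct, and it takes a genuinely different route from the paper's. The paper works at the left endpoint: it defines $k_2$ as the \emph{largest} parameter in $[t_2,s_2]$ such that $\gamma_2(k_2)$ lies within $D$ of $\gamma_1|_{[0,t_1]}$, and uses continuity plus maximality to produce a single point on $\gamma_2$ that is simultaneously $D$--close to $\gamma_1$ on both sides of $t_1$. This immediately forces $k_2$ to be within a controlled distance of $t_2$, and then $[t_2,s_2]$ splits into two regimes: for $s \leq k_2$ the point $\gamma_2(s)$ is close to $x_2$ (hence far from $\gamma_1(t)$ for $t$ well below $t_1$), and for $s > k_2$ the point $\gamma_2(s)$ is, by the very definition of $k_2$, not within $D$ of $\gamma_1|_{[0,t_1]}$ at all. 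No further estimates are needed. Your argument instead fixes a (possibly discontinuous) projection $\sigma \mapsto p(\sigma)$, establishes a coarse Lipschitz bound controlling its jumps, and runs a supremum argument at the right endpoint $s_1$ to find $\sigma^-$ with $\gamma_2(\sigma^-)$ near $y_1$; the conclusion is then extracted by chaining several quasi-geodesic and triangle inequalities. Both approaches are coarse intermediate-value arguments exploiting the continuity of improved quasi-geodesics, but the paper's choice of extremal parameter automatically yields points close to $\gamma_1$ on \emph{both} sides of the threshold and bypasses most of your estimate-chasing, making the case split trivial. Your version also requires the preliminary reduction to $s_1 - t_1$ and $t - s_1$ large (to ensure the supremum is well-placed and strictly below $\tau$), which the paper does not need; and the limiting step giving $|p(\sigma^-) - s_1| \lesssim D + \ka$ should be spelled out with a little more care since $p$ need not be continuous, but your Lipschitz-with-error bound does carry it through.
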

\begin{proof} 
We start by proving that $\dist (\gamma_2\vert_{[t_2, s_2]}, \gamma_1\vert_{[0, t_1 - \delta(D)]}) > D$ for $t\in [0, t_1 - \delta(D)]$, where $\delta(D) = \la(\epsilon(D) + 2D) + \ka$ and $\epsilon$ is a linear function as defined below. 

\begin{figure}
    \centering
    \includegraphics[width=0.7\linewidth]{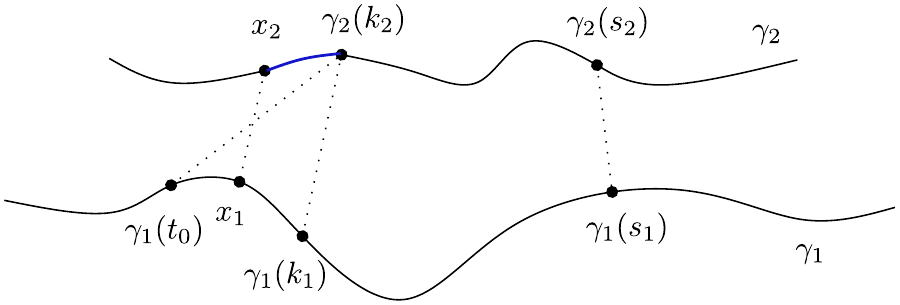}
    \caption{Proof of Lemma~\ref{lemma:quasi-geodesics_stay}. Dotted lines have length at most $D$ and every point on the blue segment has distance at most $\epsilon(D)$ from $x_2$.}
    \label{fig:lemma4.1}
\end{figure}

Let $t_2\leq k_2\leq s_2$ be the largest index such that there exists $t_0\leq t_1$ with $\dist (\gamma_2(k_2), \gamma_1(t_0)) \leq D$. By maximality of $k_2$ and continuity of quasi-geodesics, there exists $k_1 \geq t_1$ such that $\dist (\gamma_2(k_2), \gamma_1(k_1))\leq D$. This is depicted in Figure~\ref{fig:lemma4.1}. In particular, $\dist (\gamma_1(t_0), \gamma_1(k_1))\leq 2D$, and hence $k_1 - t_0 \leq 2\la D +  \ka$. Using $t_0 \leq t_1 \leq k_1$, the fact that $\gamma_1$ and $\gamma_2$ are $(\la, \ka)$--quasi-geodesics and the triangle inequality, we get that $k_2 - t_2$ are linearly bounded from above and hence there exists a linear function $\eps$ such that 
\[
\dist (\gamma_2(s), x_2)\leq \eps(D),
\]
for all $t_2\leq s \leq k_2$. If $t < t_1 - \la(\epsilon(D)+ 2D ) + \ka$. Then using that $\gamma_1$ is a $(\la, \ka)$--quasi-geodesic, we get that $\dist (\gamma_1(t), x_1) \geq \epsilon(D)+ 2D$, so by the triangle inequality, $\dist (\gamma_1(t), x_2) \geq \epsilon(D) + D$. We conclude that $\dist (\gamma_1\vert_{[0, t_1 - \delta(D)]}, \gamma_2\vert_{[t_2, s_2]}) >D$. The proof that $d(\gamma_2\vert_{[t_2, s_2]},\gamma_1\vert_{[s_1+\delta(D), T_1]})> D$ is analogous. 
\end{proof}

Given two quasi-geodesics starting at the same point, but ending far away from each other, we want to define the point at which they ``exit their $D$--neighbourhoods for sufficently'' long. This is made precise in the following definition of an exit point. The following results then describe the properties of those exit points.

\begin{definition}\label{def:exit-point}
Let $\gamma : [0, T] \to X$ and $\eta :[0, T']\to X$ be $(\la, \ka)$--quasi-geodesics starting at a point $x = \gamma(0) = \eta(0)$. We say that a point $y = \gamma(t)$ on $\eta$ is a $(D, \ell)$--exit point of $(\eta, \gamma)$ if there exists a constant $\exit\in [0, T]$ with $\dist (y, \gamma(\exit))\leq D$ and such that 
\begin{align*}
    \dist (\eta\vert_{[t, T']}, \gamma \vert_{[0, \exit+\ell]}) \geq D.
\end{align*}
A minimal such $\exit$ is called the \emph{exit moment} of $\gamma(t)$. This is depicted in Figure~\ref{fig:exit-point}. We call a $(D, \ell)$--exit point whose exit moment is minimal amongst all exit moments a \emph{minimal} $(D, \ell)$--exit point.
\end{definition}

\begin{figure}[ht!]
    \centering
    \includegraphics[width=.7\linewidth]{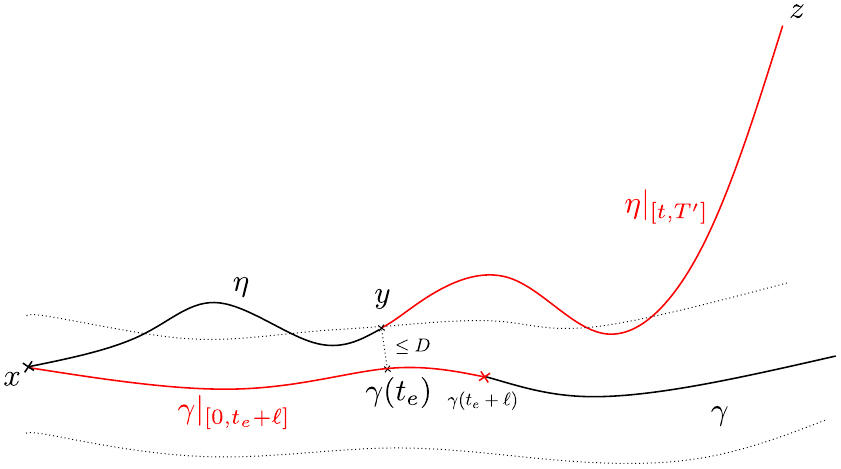}
    \caption{The point $y = \eta(t)$ is a $(D, \ell)$--exit point.}
    \label{fig:exit-point}
\end{figure}

Observe that if $\dist (\eta(T'), \gamma) > D$, then, by continuity, a $(D, \ell)$--exit point exists, for example the last point on $\eta$ in the $D$--neighbourhood of $\gamma$.

The following lemma states that if $\exit$ is the exit moment of a minimal $(D, \ell)$--exit point, then $\gamma\vert_{[0, \exit]}$ stays close to $\eta$.

\begin{lemma}   \label{lemma:minimal_exit_points}
    Using the notation of Definition \ref{def:exit-point}. If $\eta(t)$ is a minimal $(D, \ell)$--exit point of $(\eta, \gamma)$ with exit moment $\exit$, then for every $s$ with $0\leq s\leq \exit$ there exists $s'$ with $s  \leq s' \leq s+\ell$ such that $\dist (\gamma(s'), \eta)\leq D$.
\end{lemma}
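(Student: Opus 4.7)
The plan is to argue by contradiction with the minimality of $\exit$, by constructing from a putative counterexample a new $(D,\ell)$--exit point with a strictly smaller exit moment. First I would assume the conclusion fails at some $s_0 \in [0, \exit]$, so that $\gamma\vert_{[s_0, s_0+\ell]}$ is disjoint from $\nn_D(\eta)$. Since $\gamma(\exit) \in \nn_D(\eta(t))$, this forces $s_0 + \ell < \exit$. Using continuity of $s \mapsto \dist(\gamma(s), \eta)$ together with $\gamma(0), \gamma(\exit) \in \nn_D(\eta)$, I would let $(s^*, s^{**}) \subseteq [0, \exit]$ be a maximal open gap containing $[s_0, s_0 + \ell]$, so that $\gamma\vert_{(s^*, s^{**})}$ lies outside $\nn_D(\eta)$, the endpoints $\gamma(s^*), \gamma(s^{**})$ lie in $\nn_D(\eta)$, and $s^{**} - s^* > \ell$.

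Next I would pick $\eta(\tau^{**})$ within $D$ of $\gamma(s^{**})$ and show that $\eta(\tau^{**})$ is a $(D,\ell)$--exit point with an admissible exit moment $\exit' \leq s^{**} \leq \exit$; with care on the equality case $s^{**} = \exit$, the exit moment can be forced strictly below $\exit$ (e.g.\ by replacing $s^{**}$ with the infimum of the $s$-parameters of $\gamma$ within $D$ of $\eta(\tau^{**})$), contradicting minimality. The preliminary observation is $\tau^{**} < t$: otherwise $\eta(\tau^{**}) \in \eta\vert_{[t, T']}$ sits within $D$ of $\gamma(s^{**}) \in \gamma\vert_{[0, \exit + \ell]}$, violating the exit inequality at $\eta(t)$. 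The bulk of the work is then to verify the exit inequality $\dist(\eta\vert_{[\tau^{**}, T']}, \gamma\vert_{[0, s^{**} + \ell]}) \geq D$. I would split on pairs $(\tau', s')$: the regime $\tau' \geq t$ reduces immediately to the exit inequality at $\eta(t)$ since $s^{**} + \ell \leq \exit + \ell$, and the regime $\tau' \in [\tau^{**}, t)$ with $s' \in (s^*, s^{**})$ is handled by the defining property of the gap.

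The main obstacle will be the remaining regime: $\tau' \in [\tau^{**}, t)$ with $s' \in [0, s^*] \cup [s^{**}, s^{**}+\ell]$, where one must prevent $\eta(\tau')$ from coming within $D$ of the \emph{tame} portions of $\gamma$ outside the gap. Here I would exploit the quasi-geodesic properties of both $\gamma$ and $\eta$: the estimate $\dist(\gamma(s^*), \gamma(s^{**})) \geq (s^{**}-s^*)/\la - \ka \geq \ell/\la - \ka$ forces any corresponding nearby points $\eta(\tau^*), \eta(\tau^{**})$ on $\eta$ to be at distance at least $\ell/\la - \ka - 2D$, so $|\tau^{**} - \tau^*|$ grows linearly in $\ell$. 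This parameter separation on $\eta$, combined with a coarse monotonicity of the correspondence between $\gamma$ and $\eta$ on the intervals where they stay close (provided for instance by a fellow-traveller argument in the spirit of Lemma~\ref{lemma:quasi-geodesics_stay}), should suffice to keep $\eta(\tau')$ at distance at least $D$ from $\gamma(s')$ in these remaining subcases and complete the contradiction with the minimality of $\exit$.
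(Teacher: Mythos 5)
Your plan is structurally different from the paper's and has a genuine gap. The central step is the claim that $\eta(\tau^{**})$ is a $(D,\ell)$--exit point, i.e., that $\dist(\eta\vert_{[\tau^{**}, T']}, \gamma\vert_{[0, s^{**}+\ell]}) \geq D$. You correctly isolate the hard regime ($\tau' \in [\tau^{**}, t)$ paired with $s' \in [0, s^*] \cup [s^{**}, s^{**}+\ell]$), but the fellow-traveller argument you propose there does not close it. Estimates in the spirit of Lemma~\ref{lemma:quasi-geodesics_stay} require one quasi-geodesic to stay in a $D$--neighbourhood of the other; the segment $\eta\vert_{[\tau^{**}, t]}$ is exactly the piece of $\eta$ over which nothing of the sort is known --- it may wander back arbitrarily close to $\gamma\vert_{[0, s^*]}$. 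Moreover, any such step produces a coarsened bound like $\delta(D)$ or $\epsilon(D+\ell)$, whereas the exit inequality needs the \emph{sharp} threshold $\geq D$; and the lemma makes no assumption on the relative sizes of $D$, $\ell$, $\lambda$, $\kappa$, so the parameter-separation estimate $\abs{\tau^{**}-\tau^*} \gtrsim \ell/\lambda - \kappa - 2D$ can be vacuous. Finally, the choice of $\tau^{**}$ is not pinned down: with $\dist(\eta(\tau^{**}), \gamma(s^{**})) \leq D$, the single pair $(\tau^{**}, s^{**})$ already threatens the $\geq D$ bound unless equality holds exactly, which your plan does not arrange.

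The paper's proof is shorter and uses no quasi-geodesic properties at all. Take $t'$ \emph{minimal} with $\dist(\eta\vert_{[t',T']}, \gamma\vert_{[0,s+\ell]}) \geq D$; it exists since $t$ itself works (because $[0,s+\ell]\subseteq[0,\exit+\ell]$), and by continuity $\dist(\eta(t'), \gamma\vert_{[0,s+\ell]}) = D$. Then take $u$ minimal with $\dist(\eta(t'), \gamma(u)) = D$. If $u < s$, then $\eta(t')$ is an exit point with exit moment at most $u < \exit$: the required inequality $\dist(\eta\vert_{[t',T']}, \gamma\vert_{[0,u+\ell]}) \geq D$ is automatic simply because $[0,u+\ell]\subseteq[0,s+\ell]$. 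This containment trick is the idea missing from your plan; anchoring the new exit candidate at the gap boundary rather than at this extremal $t'$ is what forces you into the untenable fellow-traveller step.
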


\begin{proof}
  Let $t'\in [0, T']$ be minimal such that $\dist (\eta\vert_{[t', T']}, \gamma\vert_{[0, s+\ell]})\geq D$. Such a point exists since $\eta(t)$ is a $(D, \ell)$--exit point with exit moment $\exit \geq s$. By continuity of $\eta$, $\dist (\eta(t'), \gamma\vert_{[0, s+\ell]}) = D$. Let $0\leq u\leq s+\ell$ be minimal such that $\dist (\eta(t'), \gamma(u)) = D$.  If $u < s$, then $\eta(t')$ is a $(D, \ell)$--exit point with exit moment $u < \exit$, contradicting the minimality of $\exit$. Hence $s\leq u \leq s+\ell$, implying that the lemma holds, since for example $s' = u$ works.
\end{proof}

\begin{corollary}\label{coro:Hauss_distance_exit_point}
 In the notation of Definition \ref{def:exit-point}, let $(\la, \ka)$ be the quasi-geodesic constants of the quasi-geodesics $(\eta, \gamma)$. There exists a linear function $\nu$ $= \nu(\la, \ka)$ such that if $\eta(t)$ is a minimal $(D, \ell)$--exit point of $(\eta, \gamma)$ with exit moment $\exit$, then 
 \[
 d_{\mathrm{Haus}}(\gamma \vert_{[0,\exit]}, \eta \vert_{[0, t]}) \leq \nu (\ell + D).
 \]
\end{corollary}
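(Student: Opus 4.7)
The plan is to chain Lemma \ref{lemma:minimal_exit_points}, Lemma \ref{lemma:quasi-geodesics_stay} and Lemma \ref{lem:reverse_inclusion_QG_nbhd} in order to squeeze the Hausdorff distance. Set $D' := D + \la\ell + \ka$, which is linear in $\ell + D$. First, I would use Lemma \ref{lemma:minimal_exit_points} to establish the one-sided inclusion $\gamma\vert_{[0, \exit]} \subseteq \nn_{D'}(\eta)$: for each $s \in [0, \exit]$ the lemma supplies $s' \in [s, s + \ell]$ with $\dist(\gamma(s'), \eta) \leq D$, and the quasi-geodesic inequality for $\gamma$ yields $\dist(\gamma(s), \gamma(s')) \leq \la\ell + \ka$.

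Next, I would trim the target of this inclusion down to (essentially) $\eta\vert_{[0, t]}$ via Lemma \ref{lemma:quasi-geodesics_stay}, applied with $\gamma_1 = \eta$, $\gamma_2 = \gamma\vert_{[0, \exit]}$, matching origins $\gamma(0) = \eta(0)$, paired endpoints $\gamma(\exit)$ and $\eta(t)$ (at distance at most $D \leq D'$), and neighborhood constant $D'$. The lemma produces a linear function $\delta$ such that $\dist(\gamma\vert_{[0, \exit]}, \eta(u)) > D'$ for every $u > t + \delta(D')$. Combined with the first step, this forces the closest point of $\eta$ to any $\gamma(s)$ to lie in $\eta\vert_{[0, t + \delta(D')]}$, giving $\gamma\vert_{[0, \exit]} \subseteq \nn_{D'}(\eta\vert_{[0, t + \delta(D')]})$. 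A further triangle inequality, using $\dist(\eta(\tau), \eta(t)) \leq \la\delta(D') + \ka$ for $\tau \in [t, t + \delta(D')]$, then replaces the target by $\eta\vert_{[0, t]}$ at the cost of enlarging the neighborhood to $D'' := D' + \la\delta(D') + \ka$.

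Finally, to reverse the inclusion, I would apply Lemma \ref{lem:reverse_inclusion_QG_nbhd} to the pair $\gamma\vert_{[0, \exit]}$ and $\eta\vert_{[0, t + \delta(D')]}$. Their corresponding endpoints are at distances $0$ and at most $D + \la\delta(D') + \ka$ respectively, both linear in $\ell + D$, and the first is contained in the $D'$--neighbourhood of the second by the previous step; Lemma \ref{lem:reverse_inclusion_QG_nbhd} therefore outputs a constant $\wmorse'$, linear in $\ell + D$, for which $\eta\vert_{[0, t + \delta(D')]} \subseteq \nn_{\wmorse'}(\gamma\vert_{[0, \exit]})$, and in particular $\eta\vert_{[0, t]} \subseteq \nn_{\wmorse'}(\gamma\vert_{[0, \exit]})$. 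Combining the two inclusions yields $d_{\mathrm{Haus}}(\gamma\vert_{[0, \exit]}, \eta\vert_{[0, t]}) \leq \nu(\ell + D)$ with $\nu := \max(\wmorse', D'')$, which depends linearly on $\ell + D$ as required. There is no real obstacle beyond the bookkeeping of constants; the conceptual point is that Lemma \ref{lemma:quasi-geodesics_stay} is exactly the tool that trims the $\eta$--side of the inclusion from all of $\eta$ down to a bounded enlargement of $\eta\vert_{[0, t]}$, without which Lemma \ref{lem:reverse_inclusion_QG_nbhd} would have no interval of the right shape to reverse.
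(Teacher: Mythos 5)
Your proposal is correct and follows the same overall outline as the paper's proof: first use Lemma~\ref{lemma:minimal_exit_points} to place $\gamma\vert_{[0,\exit]}$ in a $D' = (D + \la\ell + \ka)$--neighbourhood, then invoke Lemma~\ref{lem:reverse_inclusion_QG_nbhd} to reverse the inclusion. The difference is that you explicitly insert a trimming step using Lemma~\ref{lemma:quasi-geodesics_stay} to pass from ``$\gamma\vert_{[0,\exit]}$ is $D'$--close to $\eta$'' to ``$\gamma\vert_{[0,\exit]}$ is $D'$--close to $\eta\vert_{[0, t+\delta(D')]}$,'' whereas the paper's two-line proof elides this and applies the reverse-inclusion lemma directly. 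Your extra step is not superfluous: Lemma~\ref{lem:reverse_inclusion_QG_nbhd} requires matched endpoints, and knowing only that $\gamma\vert_{[0,\exit]}$ lies near the full ray $\eta$ gives no control on the segment $\eta\vert_{[0,t]}$ without first cutting off the tail; Lemma~\ref{lemma:quasi-geodesics_stay} is exactly the tool for this. One small bookkeeping point worth making explicit: when you apply Lemma~\ref{lem:reverse_inclusion_QG_nbhd} at the end, the endpoint gap $D + \la\delta(D') + \ka$ may exceed $D'$, so you should feed the lemma the constant $\max\bigl(D',\, D + \la\delta(D') + \ka\bigr)$ (which is still linear in $\ell + D$) rather than $D'$ itself; this does not change the conclusion.
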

\begin{proof}
    By Lemma \ref{lemma:minimal_exit_points}, for every $s\in [0,\exit]$ we have that $\dist (\gamma(s), \eta) \leq D + \la \ell + \ka$. Lemma~\ref{lem:reverse_inclusion_QG_nbhd} concludes the proof.
\end{proof}

\begin{lemma}\label{lemma:nice_behavior_before_exit} 
    In the notation of Definition \ref{def:exit-point}, let $(\la, \ka)$ be the quasi-geodesic constants of the quasi-geodesics $(\eta, \gamma)$. There exists a linear function $\epsilon = \epsilon(\la, \ka)$ such that if $y =\eta(t)$ is a minimal $(D, \ell)$--exit point of $(\eta, \gamma)$ with exit moment $\exit$, and $0\leq s\leq \exit$ is such that $\dist (\gamma(s), \eta(t'))\leq D$ for some $t'\in [0, t]$, then
    \begin{align*}
        \dist (\gamma \vert_{[0, s - \epsilon(D+\ell)]}, \eta \vert_{[t', T']})\geq D.
    \end{align*}
\end{lemma}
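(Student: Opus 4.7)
\emph{Plan.} I will show $\dist(\gamma(r), \eta(v)) \geq D$ for every $r \in [0, s - \epsilon(D+\ell)]$ and $v \in [t', T']$ by partitioning the range of $v$ using the two thresholds $t' + \delta(D_0)$ and $t$, where $D_0 := \nu(\ell + D)$ is the constant supplied by Corollary~\ref{coro:Hauss_distance_exit_point} and $\delta$ is the linear function from Lemma~\ref{lemma:quasi-geodesics_stay}. The choice of scaling will be
\[
\epsilon(D+\ell) := 2\lambda D + 2\lambda\kappa + \lambda^2 \delta(D_0),
\]
which is linear in $D + \ell$ with slope depending only on $(\lambda, \kappa)$.

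Two of the three subintervals admit direct estimates. For $v \in [t, T']$, the exit condition from Definition~\ref{def:exit-point} gives $\dist(\eta(v), \gamma|_{[0, \exit + \ell]}) \geq D$, and the conclusion follows since $[0, s - \epsilon(D+\ell)] \subseteq [0, \exit + \ell]$ (using $s \leq \exit$). For $v$ satisfying $v - t' \leq \delta(D_0)$, the upper quasi-geodesic inequality on $\eta$ gives $\dist(\eta(v), \eta(t')) \leq \lambda \delta(D_0) + \kappa$, whence $\dist(\eta(v), \gamma(s)) \leq D + \lambda \delta(D_0) + \kappa$. Combining with the lower quasi-geodesic inequality $\dist(\gamma(r), \gamma(s)) \geq (s-r)/\lambda - \kappa \geq \epsilon(D+\ell)/\lambda - \kappa$ for $r \leq s - \epsilon(D+\ell)$, the triangle inequality and the choice of $\epsilon$ produce $\dist(\gamma(r), \eta(v)) \geq D$.

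The substantive step is the middle range $v \in [t' + \delta(D_0), t]$. Here I apply Lemma~\ref{lemma:quasi-geodesics_stay} to $\gamma_1 := \eta|_{[0, t]}$ and $\gamma_2 := \gamma|_{[0, \exit]}$ with containment constant $D_0$, which is justified by Corollary~\ref{coro:Hauss_distance_exit_point}. The crucial observation is that the shared basepoint $\eta(0) = \gamma(0)$ furnishes one matching pair at parameters $(t_1, t_2) = (0, 0)$ for free, while the hypothesis $\dist(\gamma(s), \eta(t')) \leq D \leq D_0$ furnishes the second pair at $(s_1, s_2) = (t', s)$ in the non-crossing order required by the lemma. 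Its conclusion then yields $\dist(\gamma|_{[0, s]}, \eta(v)) > D_0 \geq D$ for every $v \in [0, t]$ lying outside $[-\delta(D_0), t' + \delta(D_0)]$, which is exactly the middle range. The hard point is to realise that the shared basepoint must serve as a matching pair: a naive pairing using only the two close points $(\gamma(s), \eta(t'))$ and the hypothetical close pair $(\gamma(u), \eta(v))$ of a counterexample would cross (since $u \leq s$ on $\gamma$ would correspond to $v \geq t'$ on $\eta$), rendering Lemma~\ref{lemma:quasi-geodesics_stay} inapplicable without the basepoint anchor.
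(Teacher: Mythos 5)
Your proof is correct, and it takes a genuinely different route from the paper's. Both arguments run through Corollary~\ref{coro:Hauss_distance_exit_point} to get the Hausdorff closeness of $\gamma|_{[0,\exit]}$ and $\eta|_{[0,t]}$ and then invoke Lemma~\ref{lemma:quasi-geodesics_stay}, but with different choices of anchor pairs. The paper anchors the two matching pairs at $\bigl(\gamma(s),\eta(t')\bigr)$ and at $\bigl(\gamma(\exit),\eta(t)\bigr)$ (the latter close by the very definition of the exit moment), and so Lemma~\ref{lemma:quasi-geodesics_stay} directly yields that all of $\gamma|_{[0,\,s-\delta(D')]}$ is $D'$--far from the whole of $\eta|_{[t',t]}$; then the exit condition finishes the remaining range $\eta|_{[t,T']}$. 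You instead anchor at the shared basepoint $\bigl(\gamma(0),\eta(0)\bigr)$ and at $\bigl(\gamma(s),\eta(t')\bigr)$, which spends the lemma's $\delta(D_0)$--wiggle room on the $\eta$ side and so only covers $v>t'+\delta(D_0)$; you then need the extra, elementary triangle--inequality case for $v\in[t',t'+\delta(D_0)]$, with the $\epsilon(D+\ell)$ shift on $\gamma$ sized precisely to absorb that gap. The paper's anchoring is a little slicker (no extra case, and no need to use that $\gamma,\eta$ share a basepoint), while yours makes the role of the three ranges of $v$ very explicit; both are valid and give constants of the same order. One small presentational point: your definition $\epsilon(D+\ell):=2\la D+2\la\ka+\la^2\delta(D_0)$ depends on $D$ and $\ell$ separately rather than only on the sum, so to literally produce a linear function $\epsilon$ you should overestimate $D\leq D+\ell$ and set $\epsilon(x):=2\la x+2\la\ka+\la^2\delta(\nu(x))$; this changes nothing substantive.
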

\begin{proof}
By Corollary~\ref{coro:Hauss_distance_exit_point} we have $d_{\mathrm{Haus}}(\gamma \vert_{[0,\exit]}, \eta \vert_{xy}) \leq \nu(D + \ell) $. Let $D' = \nu(D+\ell) + D+\ell$. By Lemma~\ref{lemma:quasi-geodesics_stay}, with the pair $\{\gamma(s), \eta(t')\}$ having the role of $\{x_i\}$ in the Lemma and $ \{\gamma(\exit), \eta(t)\}$ the role of $\{y_i\}$, there exists a linear function $\delta$ such that $\dist (\eta \vert_{[t', t]}, \gamma\vert_{[0, s-\delta(D')]}) \geq D'$. Since $D'\geq D$, and since the definition of exit point allows us to estimate $\dist (\eta \vert_{[t, T']}, \gamma\vert_{[0, s - \delta(D')]})$, the result follows for the function $\epsilon$ defined via $\epsilon(x) = \delta(\nu(x)+x)$.
\end{proof}

We are now ready to prove Theorem~\ref{thm:mltg}.

\begin{theorem}\label{thm:mltg}  Let $X$ be a geodesic metric space equipped with a bounded $(\lao, \kao)$--quasi-geodesic combing. Let $(\la,\ka)$ and $(Q, q)$ be quasi-geodesic pairs, and let $M$ be a Morse gauge. There exist constants $L, N$ such that the following holds. 
    Let $\gamma$ be a $(\la,\ka)$--quasi-geodesic that at scale $L$ is $M$--Morse. Then $\gamma$ is $(Q, q, N)$--weakly Morse. 
\end{theorem}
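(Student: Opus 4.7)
I would prove Theorem~\ref{thm:mltg} by contradiction, converting an alleged large excursion of $\eta$ from $\gamma$ into a path that violates the middle-recurrence criterion of Lemma~\ref{lemma:mid-recurrence-consequence'} applied to a carefully chosen sub-arc of $\gamma$ of length at most $L$ on which the hypothesis $M$--Morse is directly available. Concretely, suppose $\eta \colon [0, T] \to X$ is a $(Q, q)$--quasi-geodesic with $\eta(0), \eta(T) \in \gamma$ but $\eta$ contains a point $p = \eta(s^*)$ with $\dist(p, \gamma) > N$ for $N$ to be chosen. Fix a threshold $C$, let $u_1 = \eta(s_1)$ and $u_2 = \eta(s_2)$ denote respectively the last and first points along $\eta$ before, respectively after, $s^*$ with $\dist(u_i, \gamma) \leq C$, and let $v_i = \gamma(t_i)$ be a closest point on $\gamma$ to $u_i$.

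For $L$ larger than the threshold of Theorem~\ref{thm:local-to-global-qg} (invoked for the weak-Morse constant $M(2\lao + 2, \kao)$), every sub-arc $\gamma\vert_{uv}$ lies within Hausdorff distance $\mu''$ of its combing line $\col_{uv}$. Comparing $\col_{ab}$ with $\gamma$ and with the relevant sub-arcs via bounded combing, one can ensure (after possibly reversing $\eta$) that $t_1 \leq t_2$. Form the concatenation $\pgot = \alpha_1 \ast \eta\vert_{[s_1, s_2]} \ast \alpha_2$, where $\alpha_i$ is a geodesic of length $\leq C$ from $v_i$ to $u_i$. A direct calculation using the $(Q,q)$--quasi-geodesic inequality for $\eta\vert_{[s_1, s_2]}$ and the $(\la, \ka)$--quasi-geodesic inequality for $\gamma$ gives $\length(\pgot) \leq Q^2 \la (t_2 - t_1) + B$ with $B$ depending on $C, Q, q, \la, \ka$ but leading coefficient depending only on $Q, \la$; meanwhile $\eta\vert_{(s_1, s_2)}$ stays at distance $> C$ from $\gamma$ and each $\alpha_i$ lies within $C$ of $v_i$, so taking $\delta(D) = \la(2D + \ka)$ forces $\pgot$ to stay at distance $\geq D$ from $\gamma\vert_{[t_1 + \delta(D), t_2 - \delta(D)]}$ whenever $D \leq C$. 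Crucially, the threshold $D_0$ produced by Lemma~\ref{lemma:mid-recurrence-consequence'} depends (via its proof, through the constant $c$) only on the leading coefficients of $\K$ and $\delta$ and on $M, \la, \ka$, not on their constant terms; hence $D_0$ is independent of $C$, and choosing $C = D_0$ makes the lemma applicable with $D = C$. Finally, since $\eta$ must travel from $u_1$ out to $p$ and back to $u_2$, one obtains $s_2 - s_1 \geq 2(N - C)/Q - 2q$, which via the quasi-geodesic estimates on $\eta$ and $\gamma$ yields a lower bound on $\ell := t_2 - t_1$ growing linearly in $N$; once $N$ exceeds the resulting threshold, $\ell \geq \ell_0$ and the lemma yields the contradiction, provided we are in the regime $\ell \leq L$.

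The main obstacle is the complementary case $\ell = t_2 - t_1 > L$, in which the sub-arc of $\gamma$ on which the Morse hypothesis directly applies is too short. Here the plan is to truncate: an intermediate-value argument on the (quasi-)continuous closest-point projection $s \mapsto t(s)$ from $[s_1, s_2]$ to $[0, S]$ extracts a sub-interval $[s_1', s_2'] \subseteq [s_1, s_2]$ whose projected $\gamma$-span lies in $[\ell_0, L]$. The subtle point is certifying that the truncated sub-arc of $\eta$ still escapes $\gamma$ by a distance at least $D_0$ (so the middle-recurrence argument goes through at the lower scale) and has a suitably short connection back to $\gamma$ at its endpoints (so the length of the resulting test path is linearly controlled by the new $\ell$, not by $N$). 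This is where the exit-point machinery of Corollary~\ref{coro:Hauss_distance_exit_point} and Lemma~\ref{lemma:nice_behavior_before_exit}, applied to combing lines joining points of $\gamma$ to the stray portion of $\eta$, should play the decisive role: the bounded combing forces escape to be preserved along truncations of the stray, and fellow-traveling before exit supplies the short endpoint connections, converting the local Morse hypothesis on $\gamma$ into the desired global weak-Morse conclusion for $\eta$.
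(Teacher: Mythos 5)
The overall strategy you propose (contradiction via Lemma~\ref{lemma:mid-recurrence-consequence'}) matches the paper, and your first regime ($t_2-t_1\leq L$) is close to working, including the careful observation that $D_0$ depends only on the leading coefficient of $\K$ and so can be fixed before the threshold $C$. However, you yourself identify the genuine obstruction, and it is not resolved by your sketch: in the generic regime $t_2-t_1>L$, truncating $\eta\vert_{[s_1,s_2]}$ to $\eta\vert_{[s_1',s_2']}$ leaves endpoints $\eta(s_1')$, $\eta(s_2')$ that can be at distance comparable to $N$ from $\gamma$, so the connecting segments $\alpha_i'$ back to $\gamma$ have length $\sim N$, and the resulting test path has length $\sim N$ rather than $\K(\ell')$. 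No amount of reparameterising the truncation can fix this: the stray portion of $\eta$ simply lives far from $\gamma$. The appeal to \say{fellow-traveling before exit} is too vague to verify and, as far as I can tell, does not produce the needed short endpoint connections, because the exit-point machinery controls the behaviour of combing lines relative to $\gamma$, not of $\eta$ relative to $\gamma$.

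The paper's proof avoids $\eta$ entirely when building the test path. It marks evenly spaced points $z_i$ along $\eta$ (and then along a geodesic $[z',x']$), draws combing lines $\col_i$ from $x=\gamma(0)$ to each $z_i$, and takes their minimal $(D,\ell)$--exit points with respect to $\gamma$. A pigeonhole argument on the exit moments $t_i$ (Claims~\ref{claim:0} and~\ref{claim:1}) produces a consecutive pair with $t_{i+1}-t_i>\ell$, and Claim~\ref{claim:2} adjusts this to land in $[\ell,\sigma(\ell)]$. The test path then goes \emph{up} $\col_i$ by a fixed parameter increment $\H$, \emph{jumps} from $\col_i$ to $\col_{i+1}$ using the boundedness of the combing (which gives a jump of length $\leq \kao\dist(z_i,z_{i+1})+\kao$, a constant because consecutive $z_i$ are close), and comes back \emph{down} $\col_{i+1}$. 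Because all five pieces have endpoints at distance linearly bounded in $\ell$ (never in $N_1$ or $N_2$), Lemma~\ref{lemma:tamingqg} bounds the total length by a linear function $\K(\ell)$, and the exit-point structure (Lemmas~\ref{lemma:quasi-geodesics_stay}, \ref{lemma:nice_behavior_before_exit}) guarantees the path is $D$--far from the relevant middle segment of $\gamma$. The essential idea you are missing is this \say{go up a fixed amount, jump across using boundedness, come back down} construction, which produces a short test path regardless of how far $\eta$ strays from $\gamma$; the pigeonhole on exit moments is what guarantees the construction can always be run with $\ell$ in the controlled range $[\ell_0,\sigma(\ell_0)]$, dispensing with a case split altogether.
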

\begin{proof}
    In this proof, we can and will assume that $\la\geq \lao$ and $\ka \geq \kao$.
    
 \textbf{Outline of the proof:} We will show that for large enough constants $N_1, N_2$ the following holds. If $\gamma$ is a $(\la, \ka)$--quasi-geodesic that $L$-locally is $M$--Morse, then every $(Q,q)$-quasi-geodesic with endpoints in the closed $N_1$-neighbourhood of $\gamma$ stays in the $N_2$-neighbourhood of $\gamma$. We do so by contradiction, assuming that the property does not hold and then finding a path $\pgot$ with endpoints on $\gamma$ that contradicts the Morse property of $\gamma$. We describe the role of the most important constants involved in the proof, the precise definitions of which are in the Setup paragraph.
 \begin{itemize}
     \item $D$  describes the size of the neighbourhood the path $\pgot$ does not intersect. It should be thought of as small.
     \item $\ell$ morally is the scale where we have the Morse property, and should be thought of as of medium size.
     \item $L$ is the actual scale for the Morse property, after coarseness is taken into account. More precisely,  $L=\sigma (\ell)$, for the linear function $\sigma$ (see Claim~\ref{claim:2}). 
     \item $N_1$ determines a neighbourhood of $\gamma$. It should be thought of as large.
     \item $N_2$ determines a very large neighbourhood that will be neighbourhood of the weak Morse property. 
 \end{itemize}

\begin{figure}
\[\begin{tikzcd}
	{(\kao, \lao)} & {(\ka, \la)} && {(Q, q)} && M \\
	\\
	&& {\delta, \epsilon, \sigma} & \chi \\
	&&& {(D, \ell)} \\
	&& {N_1} \\
	&& {N_2}
	\arrow[tail reversed, no head, from=1-2, to=1-1]
	\arrow[tail reversed, no head, from=3-3, to=1-2]
	\arrow[from=3-3, to=4-4]
	\arrow[tail reversed, no head, from=3-4, to=1-2]
	\arrow[tail reversed, no head, from=3-4, to=1-4]
	\arrow[tail reversed, no head, from=4-4, to=1-6]
	\arrow[tail reversed, no head, from=4-4, to=3-4]
	\arrow[tail reversed, no head, from=5-3, to=1-2]
	\arrow[tail reversed, no head, from=5-3, to=1-4]
	\arrow[tail reversed, no head, from=5-3, to=4-4]
	\arrow[tail reversed, no head, from=6-3, to=5-3]
\end{tikzcd}\]
\caption{Dependency of constants in the proof of Theorem~\ref{thm:mltg}. An arrow from $x$ to $y$ implies that $y$ depends on $x$.}
\label{fig:dependency}
\end{figure}
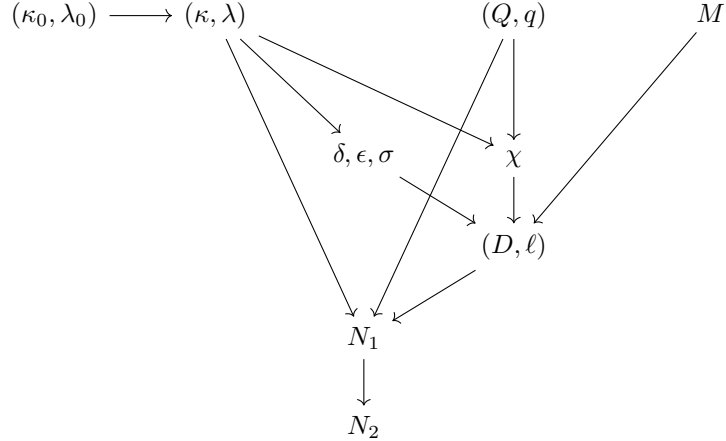

    \textbf{Setup:} We now provide a more detailed setup of the constants and functions involved in the proof. To reassure the reader, we depict the dependencies between the different constants in Figure~\ref{fig:dependency}. Let $\delta$, respectively $\epsilon$, be the linear functions obtained by applying Lemma \ref{lemma:quasi-geodesics_stay}, respectively Lemma~\ref{lemma:nice_behavior_before_exit}, to the quasi-geodesic constants $(\la, \ka)$.  Define $\sigma$ as $\sigma(x) = \epsilon(2x) +x$, and let $\K$ be the linear function defined in Claim~\ref{claim:6}.  Lemma~\ref{lemma:mid-recurrence-consequence'} applied to $(\la, \ka, \K, \sigma, \delta)$ yields $D_0$ so that choosing $D = \max\{D_0, M(3\lao +3, \kao)\}$ yields a constant $\ell_0$. Let $\ell = \max\{\ell_0, \ell_1\}$, where $\ell_1 = \ell_1(\lao, \kao, \la, \ka, M)$ is defined in Claim~\ref{claim:0}. The neighbourhoods $N_1, N_2$ will be very large compared to all the above constants, and we postpone their precise definitions. Finally, as mentioned set $L = \sigma(\ell)$.

    Let $\gamma$ be a $(\la, \ka)$--quasi-geodesic that is $L$--locally $M$--Morse. Assume that there exist $(Q, q)$--quasi-geodesics with endpoints in the $N_1$--neighbourhood of $\gamma$ which do not stay in the $N_2$--neighbourhood of $\gamma$. Let $\eta$ be the shortest (in terms of length of its domain) such $(Q,q)$-quasi-geodesic. Since $\eta$ is the shortest such quasi-geodesic, we have $\dist (\eta, \gamma)= N_1$. Let $z, z'$ be the endpoints of $\eta$ and let $x$ and $x'$ be points on $\gamma$ closest to $z$ and $z'$ respectively. We may assume that $\gamma: [0, T]\to X$ and $\eta: [0, T']\to X$ with $\gamma(0) = x, \gamma(T) = x', \eta(0) = z$ and $\eta(T') = z'$.

    Let $\net= \lfloor{T/(2\ell)}\rfloor$. 
    Since $\ell$ does not depend on $N_i$, given $N_1, m$, there exists $N_2$ so that $\net \geq m$. For each $0\leq i \leq \net$, define 
    \begin{align*}
        z_i &= \eta(i T'/\net).
    \end{align*}
    Fix a geodesic $[z', x']$ between $z'$ and $x'$. Define $\nto =\net+ \lfloor\length {[z', x']} - D \rfloor$. For each $\net+1\leq i\leq \nto$ define $z_i$ as the point on $[z', x']$ with distance $i - \net$ from $z'$.

    For all $0\leq i \leq \nto$, let $\col_i : [0, T_i]$ be the combing line from $x$ to $z_i$ and let $y_i = \col_i(s_i)$ be a minimal $(D, \ell)$--exit point for the pair $(\gamma, \col_i)$ with exit moment $t_i$, and let $x_i = \gamma(t_i)$.

\begin{figure}[ht!]
    \centering
    \includegraphics[width=\linewidth]{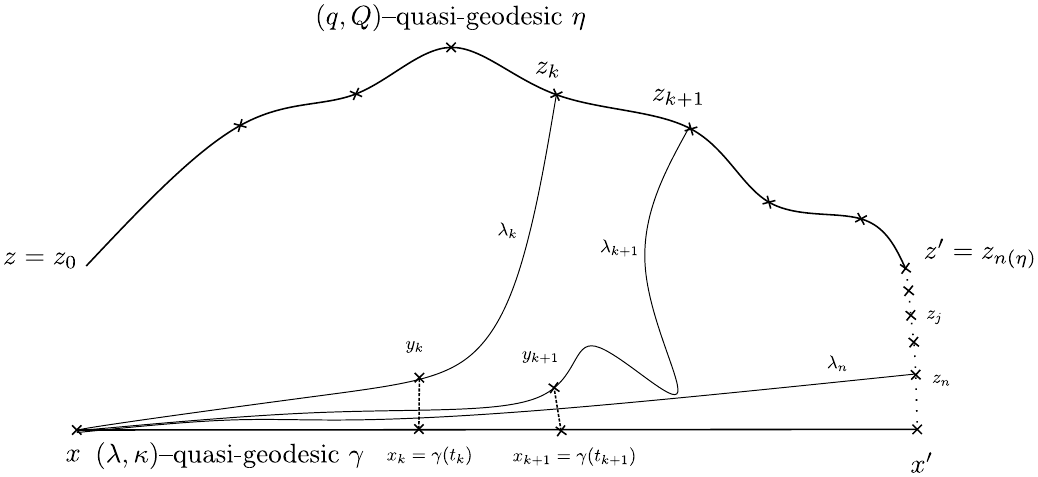}
    \caption{Setup for proof of Theorem \ref{thm:mltg}}
    \label{fig:setup}
\end{figure}

    \begin{claim}\label{claim:0}
    There exists a constant $c_1$ only depending on $N_1, \ka, \la, \ell, D$ (but not $N_2$) such that $t_0\leq c_1$ and $t_\nto\geq T - c_1$.
    \end{claim}
    \textit{Proof of Claim \ref{claim:0}:} We have that $\dist (x, z_0)= N_1$. Thus $\dlength {\col_0}\leq \la N_1+\la\ka \leq 2\la N_1$ and hence $\dist (x, \col_0(t))\leq \la 2\la N_1+\ka \leq 3\la^2 N_1$ for all $t$. Consequently, $\dist (\gamma(t_0), x)\leq D + 3\la^2 N_1\leq 4\la^2 N_1$, which implies that $t_0\leq 4\la^3 N_1 + \la\ka = c_1$. Here we used that $N_1$ is large compared to $\la, \ka$ and $D$.

    Let $\col_{\nto+1}$ be the combing line from $x$ to $x'$. By Theorem~\ref{thm:local-to-global-qg}, there is $\mu'  =\mu'(\la, \ka,\lao,\kao, M)$ such that the Hausdorff distance between $\gamma$ and $\col_{\nto +1}$ is at most $\mu'$. By boundedness the Hausdorff distance between $\col_{\nto}$ and $\col_{\nto+1}$ is at most $\lao(D+1)+\kao$. Thus, the Hausdorff distance between $\gamma $ and $\col_\nto$ is at most $\mu = \mu' + \lao(D+1)+\kao$. Let $\delta$ be the linear function of Lemma~\ref{lemma:quasi-geodesics_stay}, and fix $\ell_1 \geq \delta (\mu)$ so that if $\beta$ is a $(\la, \ka)$--quasi-geodesic, we have $d(\beta(0), \beta(\ell_1)) \geq 5\mu$. We show that we must have $T- t_n< \ell_1$. Suppose this is not the case, and let $y\in \col_\nto$ be a point closest to $\gamma(t_\nto + \ell_1)$. We have $d(y, \gamma(t_\nto+\ell_1))\leq \mu$ and $d(y_\nto, \gamma(t_\nto))\leq D \leq \mu$. By Lemma~\ref{lemma:quasi-geodesics_stay}, we must have $y\in \col_\nto\vert_{y_\nto z_\nto}$. Since $d(\gamma(t_\nto), \gamma(t_\nto+\ell))\geq 5\mu$, the triangle inequality yields $d(y_\nto, y)\geq 3\mu$. Then Lemma~\ref{lem:_wiggling_ends_of_QG} yields that the concatenation $[\gamma(t_\nto), y_{\nto}]\ast \col_{\nto}\vert _{y_{\nto} y}\ast [y, \gamma(t_{\nto} + \ell_1)]$ is a $(3\lao + 3, \kao)$--quasi-geodesic. But as the concatenation exits the $D\geq M(3\lao + 3, \kao) +1$ neighbourhood of $\gamma$, we obtain a contradiction. \hfill $\blacksquare$

    Since $\eta$ is a $(Q,q)$--quasi-geodesic, by requiring that $N_2$ is large compared to $N_1$ (and hence large compared to $\ka,\la$, $ \ell$ and $(Q,q)$) we can guarantee that $\net$ is large. This has important applications. Firstly, we note that we can make $2c_1/\nto$ and $(\nto - \net)/\net$ as small as we like, in particular, we can assume that

    \begin{align}
        2c_1/\nto &< \ell/3,\label{eq_net1}\\ 
        \nto&\leq \frac{3\net}{2}.\label{eq_net2}
    \end{align}
Moreover, control over $N_2$ allows us to bound $\dist (z_i, z_{i+1})$. For $i\geq \net$ we have $\dist (z_i, z_{i+1}) =1$, so we proceed to bound $\dist (z_i, z_{i+1})$ for $i<\net$. To do this, we first want to bound $T'$ i.e. the length of the domain of $\eta \colon [0, T'] \to X$. Observe that $T\leq 2(\net+1) \ell$ and hence $\dist (x,x')\leq 2(\net+1) \ell\la + \ka$. Therefore, by the triangle inequality,
    \begin{align*}
        \dist (z_0, z_n)\leq 2(\net+1) \ell\la + \ka + 2N_1.
    \end{align*}
    Since $\eta$ is a $(Q, q)$--quasi-geodesic we have that
    \begin{align*}
        T'\leq Q(2(\net+1) \ell\la + \ka + 2N_1) + Qq.
    \end{align*}
    Recall that for $N_2$ large enough, $N_1/\net$ can be as small as we want. Hence
    \begin{align}
        \frac{T'}{\net}&\leq Q(2 \ell\la) +  \frac{ Q(2 \ell\la + \ka + 2N_1)+q}{\net}
        \leq  Q(2 \ell\la) +  1.\label{eq:final_N2}
    \end{align}
    Lastly we can see that
    \begin{align}\label{eq:boundonzizi1}
        \dist (z_i, z_{i+1}) = d\left(\eta\left(i \frac{T'}{\net}\right), \eta\left(\left(i+1\right)\frac{T'}{\net}\right)\right) \leq   Q( Q(2 \ell\la) +  1) + q = \C,
    \end{align}
    which gives the desired bound on $\dist (z_i, z_{i+1})$. It is important that $C$ does not depend on $N_1$. More precisely, \begin{align}\label{N_i_dependency}
        \text{for each }N_1\text{ there exists }N_2\text{ that makes~\eqref{eq_net1}, \eqref{eq_net2} and \eqref{eq:boundonzizi1} true.} \end{align}
    
    \begin{claim}\label{claim:1}
        There exists $0\leq i\leq \nto-1$ such that $t_{i+1} - t_i >  \ell$. 
    \end{claim}
   
    \textit{Proof of Claim \ref{claim:1}.}
       Claim \ref{claim:0} implies that $\sum_{i=0}^{\nto-1} t_{i+1} - t_i \geq T - 2c_1$. Thus, there exists $0 \leq i\leq \nto-1$ such that $t_{i+1}- t_i \geq T/\nto - 2c_1/\nto$. By \eqref{eq_net1} and \eqref{eq_net2}, 
       \begin{align*}
           \frac{T}{\nto} - \frac{2c_1}{\nto}> \frac{2T}{3\net} - \frac{ \ell}{3} \geq \frac{4 \ell}{3} - \frac{ \ell}{3}\geq  \ell.
       \end{align*}
       \hfill$\blacksquare$

    Let $0\leq i \leq \nto-1$ be such that $t_{i+1} - t_i >  \ell$. Recall that $\delta$, respectively $\epsilon$, are the linear functions obtained by applying Lemma \ref{lemma:quasi-geodesics_stay}, respectively Lemma~\ref{lemma:nice_behavior_before_exit}, to the quasi-geodesic constants $(\la, \ka)$. Similarly, recall that $\sigma(x) = \epsilon(2x) +x$ and define $\gr = \gamma \vert_{[t_i + \delta(D), t_i +  \ell - \delta(D)]}$. In what follows, we shorten the expression \say{a path has distance at least $D$ from $\gamma_R$} with \say{a path is \emph{$D$--far}}.

    Recall that $y_i=\col_i(s_i)$ is a minimal $(D, \ell)$--exit point for $(\col_i, \gamma)$. Hence we have that $\dist (\col_i(s_i), \gamma(t_i))\leq D$ and that the following two paths are $D$--far.
    \begin{enumerate}[label=\Roman*)]
        \item[$\pgot_{\mathrm{I}}$] $:={\col_i}\vert_{[s_i, T_i]}$,\label{1}
        \item[$\pgot_{\mathrm{II}}$] $:=[\gamma(t_i), \col_i(s_i)]$.\label{3}
    \end{enumerate}
    To obtain this for $\pgot_{\mathrm{II}}$ we used Lemma~\ref{lemma:quasi-geodesics_stay}. We now want to find points $\gamma(t')$ and $\col_{i+1}(s')$ with similar properties and where in addition $t' -t_i$ is bounded from above and below in terms of $\ell$.

    \begin{claim}\label{claim:2} 
        There exist $t'\in [0, T]$ and $s'\in [0, T_{i+1}]$ such that $\ell\leq t' - t_i\leq \sigma(\ell)$ and $\dist (\gamma(t'), \col_{i+1}(s'))\leq D$. Moreover, we can choose $t'$ and $s'$ such that the following paths are $D$--far.
            \begin{enumerate}[resume, label = \Roman*)]
            \item[$\pgot_{\mathrm{III}}$] $:={\col_{i+1}}\vert_{[s', T_{i+1}]}$,\label{2}
            \item[$\pgot_{\mathrm{IV}}$] $:=[\gamma(t'), \col_{i+1}(s')]$.\label{4}
        \end{enumerate}
    \end{claim}
\textit{Proof of Claim \ref{claim:2}.}

\textbf{Case 1: $t_{i+1}-t_i \leq \sigma(\ell)$.} In this case, choose $t' = t_{i+1}$ and $s' = s_{i+1}$. The fact that $\pgot_{\mathrm{III}}$ is $D$--far holds since $\qgot_{i+1}(s_{i+1})$ is a $(D, \ell)$--exit point of $(\qgot_{i+1}, \gamma)$. Furthermore, we have that $\dist (\gamma(t'), \col_{i+1}(s'))\leq D$. The fact that $\pgot_{\mathrm{IV}}$ is $D$--far follows from Lemma \ref{lemma:quasi-geodesics_stay}.

\textbf{Case 2: $t_{i+1} - t_i > \sigma(\ell)$}.  In this case, let $t_i + \sigma(\ell) -  \ell\leq  t' \leq t_i + \sigma( \ell)$ be such that $\dist (\gamma(t'), \col_{i+1})\leq D$. Such a point must exist by definition of $t_{i+1}$. Choose $s'$ such that $\dist (\col_{i+1}(s'), \gamma(t'))\leq D$. The fact that  and $\pgot_{\mathrm{IV}}$ is $D$--far follows from  Lemma~\ref{lemma:nice_behavior_before_exit} and the choice of $\sigma$. The fact that $\pgot_{\mathrm{III}}$ is $D$--far follows from Lemma \ref{lemma:quasi-geodesics_stay} and $\dist (\col_{i+1}(s'), \gamma(t'))\leq D$.  \hfill $\blacksquare$

Define $a = \gamma(t_i), b = \gamma(t'), c = \col_{i}(s_i)$ and $d = \col_{i+1}(s')$. Now the goal is to find a path of controlled length that connects $c$ with $d$ and is $D$--far.
The idea is to \say{flow-up} the combing lines to get far away from $\gr$ and then \say{jump} from one combing line to the other using the boundedness assumption to guarantee that the two combing lines are sufficiently close to each other. \\

\textbf{Defining the path $\pgot$.} Observe that $\dist (a, u) \leq \la\sigma( \ell) + \ka$ for all $u \in \gamma\vert_{ab}$. Define $\W = 2D + \la\sigma( \ell) + \ka$, and note that $\W$ does not depend on $N_1, N_2$. By the triangular inequality, $\dist (c,d)\leq \W$ and $\dist (c, u) \leq \W$ for all $u\in \gr$. Recall that $c = \col_i(s_i), d = \col_{i+1}(s')$ and that $\C$ is a bound on $\dist (z_i, z_{i+1})$ not depending on $N_1, N_2$. Define 
\begin{align}\label{eq:weakh}
    \H = \la(\ka \C + 2\ka + D + \W). 
\end{align}
Now, choose $N_1$ large enough so that the domain length of $\col_i$ is at least $s_i + \H$, and choose $N_2$ large enough for \ref{N_i_dependency}. Hence we have
\begin{align}\label{eq:weak:dist_to_H}
    \dist (\col_i(\H + s_i), \col_i(s_i))\geq \ka \C + \ka + D + \W.
\end{align}
By boundedness, there exist $s$ such that 
\begin{align}\label{eq:weak:s}
    \dist (\col_i(\H + s_i), \col_{i+1}(s))\leq \ka\C + \ka.
\end{align}

Consider the following path 
\begin{align}
   \pgot = [a, c]\ast \col_i\vert_{[s_i, s_i+\H]}\ast [\col_i(s_i+\H), \col_{i+1}(s)]\ast \col_{i+1}\vert_{[s, s']} \ast [d, b].
\end{align}

\begin{claim}\label{claim:6}
The path $\pgot$ satisfies the following two properties: 
\begin{enumerate}
    \item $\pgot$ is $D$--far;\label{prop:weak-distance}
    \item $\length {\pgot}\leq \K(\ell)$, where $\K$ is a linear function that can be defined only in terms of $(\la, \ka, \lao, \kao, Q, q)$. \label{prop:weak_length_bound}
\end{enumerate}
\end{claim}
\textit{Proof of Claim \ref{claim:6}.} (1): By the triangle inequality, \eqref{eq:weak:dist_to_H} and \eqref{eq:weak:s}, we have that $\dist ([\col_i(s_i+\H(\ell)), \col_{i+1}(s)], \gr)\geq D$. Hence $\dist (\pgot, \gr)\geq D$ follows from the fact that $\pgot_{\mathrm{I}}, \dots, \pgot_{\mathrm{IV}}$ are $D$--far.

(2): Since $\pgot$ is a concatenation of five $(\la, \ka)$--quasi-geodesics, by Lemma~\ref{lemma:tamingqg}, it suffices to show that for each of them the distance of their endpoints is linearly bounded in $\ell$. The length of $[a, c], [b, d]$ is at most $D\leq \ell$ by construction. For $\col_i\vert_{[s_i, s_i+\H]}$ it follows since $\H$ is bilinear in $(D,\ell)$ and hence has a linear upper bound in $\ell$, and $\H$ is defined in terms of $(\la, \ka, \lao, \kao,Q, q)$. For $[\col_i(s_i+\H(\ell)), \col_{i+1}(s)]$ it follows from \eqref{eq:weak:s}, where again $C$ is linear in $\ell$. Lastly, for $\col_{i+1}\vert_{[s, s']}$ it follows from the triangle inequality, because both $\dist (a, b)$ and all of the other four subsegments had endpoints at linearly bounded distance. Thus there exists a linear function $\K$ such that $\length{\pgot}\leq \K(\ell)$. \hfill$\blacksquare$.\\

\textbf{Concluding the proof.} We now have a path $\pgot$, which by Lemma~\ref{lemma:mid-recurrence-consequence'} cannot exist, a contradiction. Hence, all $(Q, q)$--quasi-geodesic $\eta$ with endpoints in the $N_1$--neighbourhood of $\gamma$ stay in the $N_2$--neighbourhood. Implying any $(\la, \ka)$--quasi-geodesic which is $L$--locally $M$--Morse is $(N_2, Q, q)$--weakly Morse. The scale $L$ depends on $(Q, q)$. 
\end{proof}

\section{The Weak Morse local-to-global and sigma-compactness of the Morse boundary}\label{sec:WMLTG}

The goal of this section is to prove Theorem~\ref{thmi:non-sigma} which states that a geodesic space (whose isometry groups acts coboundedly on itself) with the weak Morse MLTG property has either the (ordinary) MLTG property or has non-$\sigma$-compact Morse boundary. We remind that a topological space is \emph{$\sigma$--compact} if it can be written as a countable union of compact sets. Let us fix some notation.

\begin{definition}[Exhaustion]
    We say that a sequence $(M_n)_{n\in \N}$ is an \emph{exhaustion} of $\mb X$ if $M_n\leq M_{n+1}$ for all $n$ and for all Morse rays $\gamma: [0, \infty)\to X$ starting at $\bp$ we have that $\gamma$ is $M_n$--Morse for some $n$.
\end{definition}

Observe that the Morse boundary $\mb X$ is $\sigma$ compact if and only if there exists an exhaustion of $\mb X$ by \cite[Lemma 2.1]{Cordes:Morse}, \cite[Lemma 4.1]{cordesdurham:boundary} and \cite{CordesSistoZbinden:corrigendum}. We want to recall the following well-known facts about Morse quasi-geodesics references for proofs can be found for example in \cite[Lemma~2.8]{zbinden:morse}.

\begin{lemma}\label{lemma:monster}
    Let $M$ be a Morse gauge and $(\la, \ka)$--quasi-geodesic constants, there exists a Morse gauge $M'$ only depending on $M, \la$ and $\ka$ such that the following holds: 
    \begin{enumerate}
        \item \textit(Triangles) Let $\Delta$ be a triangle with (potentially unbounded) sides $\alpha, \beta, \gamma$ which are $(\la, \ka)$--quasi-geodesics. If $\alpha$ and $\beta$ are $M$--Morse, then $\gamma$ is $M'$--Morse. \label{prop:triangles}
        \item \textit (Equivalent quasi-geodesics) Let $\alpha, \beta$ be $(\la, \ka)$--quasi-geodesics which are at bounded Hausdorff distance and which have the same starting point. If $\alpha$ is $M$--Morse, then $\beta$ is $M'$--Morse.\label{prop:equivalence}
        \item \textit{ (Concatenation)} Let $\alpha$ and $\beta$ be $M$--Morse $(\la, \ka)$--quasi-geodesics such that their concatenation $\gamma = \alpha\ast \beta$ is a $(\la, \ka)$--quasi-geodesic. Then $\gamma$ is $M'$--Morse.\label{prop:concatentaion}
    \end{enumerate}
\end{lemma}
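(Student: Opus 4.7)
All three statements are folklore consequences of the definition of Morse quasi-geodesic. The overall plan is to reduce the Morse property of the output quasi-geodesic to that of an input quasi-geodesic, by splitting or extending paths at well-chosen points, with all auxiliary quasi-geodesic constants controlled via Lemmas~\ref{lemma:tamingqg} and~\ref{lem:_wiggling_ends_of_QG}.

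For part (2), the plan is to use the shared starting point and the bounded Hausdorff distance to transport any $(Q,q)$-quasi-geodesic $\eta$ with endpoints on $\beta$ to a quasi-geodesic with endpoints on $\alpha$: prepend and append geodesic segments of length at most $d_{\mathrm{Haus}}(\alpha, \beta)$ from the endpoints of $\eta$ to nearby points on $\alpha$, and apply Lemma~\ref{lem:_wiggling_ends_of_QG} to upgrade the result to a $(Q', q')$-quasi-geodesic whose constants depend only on $(Q, q, \la, \ka, d_{\mathrm{Haus}}(\alpha, \beta))$. The Morse hypothesis on $\alpha$ will then bound the excursion of the extended path, and hence of $\eta$, away from $\beta$ uniformly in $(Q, q)$.

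For part (3), write $\gamma = \alpha \ast \beta$ with junction point $w$. Given a $(Q,q)$-quasi-geodesic $\eta$ with endpoints on $\gamma$, if both endpoints lie on the same piece the corresponding Morse hypothesis suffices; otherwise the plan is to split $\eta$ at a point $\eta(t_0)$ closest to $w$. Since $\gamma$ is a $(\la, \ka)$-quasi-geodesic, the distance $\dist(\eta(t_0), w)$ will be linearly bounded in $(Q, q, \la, \ka)$. Appending a short geodesic from $\eta(t_0)$ to $w$ and invoking Lemma~\ref{lem:_wiggling_ends_of_QG}, the two halves become quasi-geodesics with controlled constants whose endpoints lie on $\alpha$ and $\beta$ respectively, to each of which the Morse hypothesis applies.

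For part (1), I would first establish a slim-triangle fact: $\gamma$ itself lies in a Morse-controlled neighbourhood of $\alpha \cup \beta$, obtained by applying the Morse property of $\alpha$ (or $\beta$) to a suitable quasi-geodesic connecting a point of $\gamma$ to the opposite vertex. Given a $(Q,q)$-quasi-geodesic $\eta$ with endpoints on $\gamma$, extend $\eta$ on both sides by subsegments of $\gamma$ and then by $\alpha$ and $\beta$ to reach a common vertex; the resulting concatenation should be a quasi-geodesic of uniform constants, so the Morse property of $\alpha$ or $\beta$ will force $\eta$ into a bounded neighbourhood of $\alpha \cup \beta$, which the slim-triangle fact converts to a neighbourhood of $\gamma$. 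The hardest part will be the bookkeeping in part (1): ensuring the auxiliary concatenations remain quasi-geodesics with constants depending only on $(M, \la, \ka)$, and that the absorbed constants in the slim-triangle argument cascade correctly through the extension.
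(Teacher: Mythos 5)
The paper does not prove this lemma: it states it and cites \cite[Lemma~2.8]{zbinden:morse} for the proof, so there is no in-paper argument to compare against and your plan must be judged on its own.

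The concrete gap is in part (3). You assert that ``since $\gamma$ is a $(\la,\ka)$--quasi-geodesic, the distance $\dist(\eta(t_0),w)$ will be linearly bounded in $(Q,q,\la,\ka)$.'' This is false as a consequence of $\gamma$ being a quasi-geodesic alone: in $\mathbb{R}^2$ a large upper semicircle from $(-R,0)$ to $(R,0)$ is a $(Q,q)$--quasi-geodesic for fixed $(Q,q)$ and yet stays at distance $R$ from the origin, which is an interior point of the geodesic $\gamma$ along the $x$--axis. The boundedness of $\dist(\eta(t_0),w)$ is exactly what the Morse hypothesis on $\alpha,\beta$ must supply, and the resulting constant necessarily depends on $M$. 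The correct order is: pick $t_0$ minimising $\dist(\eta(t_0),w)$; then the monotonicity hypothesis of Lemma~\ref{lem:_wiggling_ends_of_QG} is satisfied and the two extended halves $\eta\vert_{[0,t_0]}\ast[\eta(t_0),w]$ and $[w,\eta(t_0)]\ast\eta\vert_{[t_0,\cdot]}$ are $(2Q+1,q)$--quasi-geodesics with endpoints on $\alpha$ and on $\beta$ respectively; apply the $M$--Morse property of $\alpha$ and $\beta$ to each; only then, combining that $\eta(t_0)$ is $M(2Q+1,q)$--close to points $\gamma(r_1)\in\alpha$ and $\gamma(r_2)\in\beta$ with $r_1\le w'\le r_2$ and using that $\gamma$ is a $(\la,\ka)$--quasi-geodesic, conclude $\dist(\eta(t_0),w)\lesssim_{M,\la,\ka}M(2Q+1,q)$. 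The bound is an output of the Morse hypothesis, not an input deduced from the quasi-geodesity of $\gamma$, and it does depend on $M$.

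Two smaller points. In part (2) you invoke Lemma~\ref{lem:_wiggling_ends_of_QG}, but its hypothesis (every intermediate point is farther from $z_i$ than $u_i$ is) is not satisfied when you just prepend and append geodesics from the endpoints of $\eta$ to nearby points on $\alpha$. You do not need it: appending geodesics of length at most $r=d_{\mathrm{Haus}}(\alpha,\beta)$ at either end of a $(Q,q)$--quasi-geodesic increases only the additive constant by $O(r)$, which suffices. You also need to argue that the subsegment $\alpha\vert_{[s',t']}$ picked out by the Morse property is within bounded Hausdorff distance of $\beta\vert_{[s,t]}$; this is where the shared starting point is used, to synchronise parameterisations (cf.\ Lemma~\ref{lem:reverse_inclusion_QG_nbhd}). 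In part (1) the step ``extend $\eta$ by subsegments of $\gamma$'' need not produce a quasi-geodesic for constants depending only on $(Q,q,\la,\ka)$ — this is essentially the same issue as in part (3) — and you yourself flag this as the hard bookkeeping; a safer route is to first prove (2), then reduce (3) to a concatenation of geodesics via (2), and finally obtain (1) from the slim--triangle property of Morse sides together with (2) applied to the geodesic $[u,v]$ between the endpoints of $\gamma$.
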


\subsection{Properties of locally Morse geodesics}
We start by generalizing some basic properties of Morse geodesics for locally Morse geodesics. We only need this in a very specific setting, and hence the statement and the proofs are specialized accordingly. However, the proof can be easily adapted to other, more general, settings.

\begin{lemma}\label{lemma:stiching-quasi-geodesics1}
Let $X$ be a geodesic metric space. Let $M, N$ be Morse gauges and let $(\la, \ka)$ be quasi-geodesic constants. There exists a Morse gauge $N'$, a constant $\delta$ and a function $\tau: \R_{\geq 0}\to \R_{\geq 0}$ such that for every constant $L$ the following holds. Let $\eta$ be an $N$--Morse geodesic segment with endpoints $x, z$, and let $\gamma$ be a $(\la, \ka)$--quasi-geodesic segment which is $\tau(L)$--locally $M$--Morse with endpoints $u$ and $v$. If $\dist(u, z)\leq r$, then for any $y\in \eta$ the geodesic $[y, v]$ is $L$-locally $N'$--Morse and contained in the $\delta$--neighbourhood of $\eta \vert_{yz}\cup \gamma$.
\end{lemma}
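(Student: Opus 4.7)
The plan is to compare the geodesic $[y,v]$ to the reference concatenation $\alpha := \eta|_{yz} \ast [z,u] \ast \gamma$, a path from $y$ to $v$ consisting of an $N$-Morse geodesic piece, a short bridge of length at most $r$ linking $z$ to $u$, and a $(\la,\ka)$-quasi-geodesic that is $\tau(L)$-locally $M$-Morse. Both conclusions will follow by showing that $[y,v]$ stays in a controlled neighborhood of $\alpha$ and inherits local Morse properties from it, the Morse gauges being combined via Lemma~\ref{lemma:monster}.

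First I would establish the containment $[y,v]\subseteq \mathcal N_{\delta}(\eta|_{yz}\cup\gamma)$ for some $\delta$ depending only on $N,M,\la,\ka,r$, in particular independent of $L$. The argument is by contradiction: suppose some $w\in[y,v]$ lies at distance greater than $\delta$ from $\eta|_{yz}\cup\gamma$, and take the maximal sub-arc $\sigma\subseteq[y,v]$ through $w$ that avoids the open $\delta/2$-neighborhood. Its endpoints $a,b$ admit nearest points $a',b'$ on $\eta|_{yz}\cup\gamma$, and Lemma~\ref{lem:_wiggling_ends_of_QG} allows one to cap off $\sigma$ into a $(2\la+1,\ka)$-quasi-geodesic $\bar\sigma$ with endpoints on $\eta|_{yz}\cup\gamma$. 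If $a',b'$ both lie on $\eta|_{yz}$, the $N$-Morseness of $\eta$ together with Lemma~\ref{lem:reverse_inclusion_QG_nbhd} force $\bar\sigma$ into a uniform neighborhood of $\eta$, contradicting the avoidance of $w$ for $\delta$ large enough. If both lie on $\gamma$, the same conclusion holds provided $\tau(L)$ exceeds a constant $\tau_0$ large enough that the relevant subsegment of $\gamma$ (whose domain-length is linear in $\dist(a',b')$, and hence bounded in terms of $\delta$) is $M$-Morse. The mixed case $a'\in\eta|_{yz}$, $b'\in\gamma$ is reduced to the two previous ones using the short bridge $[z,u]$ and Lemma~\ref{lemma:monster}(3).

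Given the containment, I would then prove the $L$-local $N'$-Morse property. Let $\sigma$ be any sub-arc of $[y,v]$ of length at most $L$; its endpoints project to points $a',b'\in\eta|_{yz}\cup\gamma$ at distance at most $\delta$. In the case where both $a',b'$ lie on $\eta|_{yz}$, the subsegment $\eta|_{a'b'}$ is $N$-Morse, and Lemma~\ref{lem:_wiggling_ends_of_QG} combined with Lemma~\ref{lemma:monster}(2) yields that $\sigma$ is $N'$-Morse for a uniform gauge. In the case where both lie on $\gamma$, the domain-length of $\gamma|_{a'b'}$ is bounded linearly in $L$ by the $(\la,\ka)$-quasi-geodesic constants and $\delta$; choosing $\tau(L)$ as a suitably large linear function of $L$ ensures $\gamma|_{a'b'}$ is $M$-Morse, and the same argument applies. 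The mixed case is reduced to these by splitting $\sigma$ at a point close to $z$ and combining the two sides with Lemma~\ref{lemma:monster}(3).

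The main technical obstacle will be Step 1, and within it the coordination of $\delta$, $r$, and the domain-length of the subsegment of $\gamma$ used in the contradiction: one must ensure $\delta$ is independent of $L$ (so that $\tau(L)$ need only grow linearly in $L$ for Step 2 to close) while keeping the Morse gauges combining cleanly across the bridge $[z,u]$ via Lemma~\ref{lemma:monster}.
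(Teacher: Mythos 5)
Your Step 1 contains a gap that I do not see how to close with the tools you invoke. When the escaping sub-arc $\sigma\subseteq[y,v]$ has both feet $a',b'$ landing on $\gamma$, you need the sub-quasi-geodesic $\gamma|_{a'b'}$ to be $M$--Morse, which requires $\tau(L)$ to exceed the parametrized length of $\gamma|_{a'b'}$. You assert that this length is ``bounded in terms of $\delta$'', but it is not: $a$ and $b$ are the endpoints of the maximal escaping sub-arc, so while $\dist(a,a')=\dist(b,b')=\delta/2$, the arc $\sigma$ itself---and hence $\dist(a,b)$ and $\dist(a',b')$---can be as long as $[y,v]$ permits, with no bound depending only on $\delta$. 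A geodesic can leave the $\delta/2$--neighbourhood of a merely \emph{locally} Morse quasi-geodesic for an arbitrarily long stretch before returning. Consequently no finite $\tau_0$ as you describe exists, and the claimed independence of $\delta$ from $L$ does not follow. You flag this coordination as the ``main technical obstacle'', but the issue is worse than coordination: without some global control on $\gamma$, the bound simply is not there.

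The missing ingredient is the weak MLTG property, which the paper's own proof invokes (the hypothesis is suppressed in the statement but is the standing assumption of the section, and Lemma~\ref{lemma:stiching-quasi-geodesics1} is used only inside spaces that satisfy it). The paper proceeds differently: let $c\in[y,v]$ be a closest point to $u$. By Lemma~\ref{lem:_wiggling_ends_of_QG}, $\pgot_1 = [u,c]\ast[y,v]|_{cv}$ is a $(3,0)$--quasi-geodesic whose endpoints $u,v$ are the endpoints of $\gamma$; since $\gamma$ is $\tau(L)$--locally $M$--Morse, weak MLTG (for $\tau(L)$ large enough) forces $\pgot_1$ into a uniform neighbourhood of $\gamma$, giving exactly the global control over $[c,v]$ that a purely local argument cannot. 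The other piece, $\pgot_2 = [y,v]|_{yc}\ast[c,u]\ast[u,z]$, is a $(3,2r)$--quasi-geodesic with endpoints on the globally $N$--Morse $\eta$, so it stays near $\eta$. Then $[y,v]$ is the concatenation of two geodesics, each $L$--locally Morse and lying in a controlled neighbourhood of $\eta|_{yz}\cup\gamma$, and the conclusion follows. Your contradiction scheme could be salvaged in the both-on-$\gamma$ case by replacing the local Morseness appeal with an application of weak MLTG to the capped-off arc $\bar\sigma$, but as written the proposal does not establish the containment.
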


\begin{figure}[ht]
    \centering
    \includegraphics{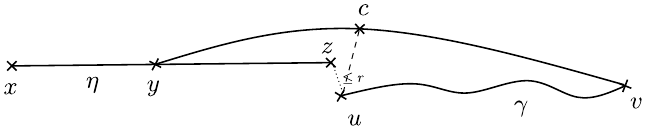}
    \caption{Proof of Lemma \ref{lemma:stiching-quasi-geodesics1}}
    \label{fig:stiching1}
\end{figure}

\begin{proof}
Let $c$ be a closest point to $u$ on $[y,v]$. By Lemma~\ref{lem:_wiggling_ends_of_QG}, $\pgot_1 = [u, c]\ast [y, v] \vert_{cv}$ is a $(3, 0)$--quasi-geodesic. By the weak MLTG property, up to choosing $\tau(L)$ large enough, we can guarantee that $\pgot_1$ lies in a uniform neighbourood of $\gamma$. In particular, it is $L$--locally Morse, where the constants can be determined in terms of $L$ and the local Morse gauge of $\gamma$. Now consider the concatenation $\pgot_2 = [y, v] \vert_{yc}\ast [c, u] \ast[u, z]$. It is a $(3, 2r)$--quasi-geodesic. Since $\eta$ is Morse, we have that $\pgot_2$ lies in a uniform neighbourhood of $\eta$ and it is Morse, where the Morse gauge depends only on $r$ and $N$. Thus, $[y, v]$ is the concatenation of two geodesics which are both $L$--locally Morse. In particular, $[y,v]$ is $L$--locally Morse, for a worse Morse gauge. Moreover, since $\pgot_1$ and $\pgot_2$ are both in a uniform neighbourhood of $\eta \vert_{yz}\cup \gamma$, so is $[y, v]$.
\end{proof}

\begin{lemma}\label{lemma:stiching-quasi-geodesics2}
Let $X$ be a geodesic metric space. Let $M, N$ be Morse gauges and let $(\la,\ka)$ be quasi-geodesic constants. There exist Morse gauges $N'$ and $ N_{\mathrm{not}}$, functions $f:\R_{\geq 0 }\times \R_{\geq 0}\to \R_{\geq0}$ and $g : \R_{\geq 0}\to \R_{\geq 0}$ and quasi-geodesic constants $(\la', \ka')$ such that for every constant $L$ the following holds. For $i=1, 2$, let $\eta_i$ be an $N$--Morse geodesic segment with endpoints $x_i, z_i$, and let $y_i$ be a point on $\eta_i$. Let $\gamma$ be a $(\la, \ka)$--quasi-geodesic segment which is $g(L)$--locally $M$--Morse but not $N_{\mathrm{not}}$--Morse and whose endpoints $u$ and $v$ satisfy
\begin{align*}
  \dist(z_1, u)\leq r &\quad\text{and}\quad \dist(v, x_2)\leq r,\\
  \dist(y_1, z_1) \geq f(\dist (u, v), L) &\quad\text{and} \quad \dist (x_2, y_2)\geq f(\dist (u, v), L).   
\end{align*}
Then the path $\pgot = \eta_1 \vert_{x_1y_1}\ast[y_1, y_2]\ast\eta_2 \vert_{ y_2 z_2}$ is a $(\la', \ka')$--quasi-geodesic which is $L$-locally $N'$--Morse. Moreover, if $\pgot$ is an $\tilde{N}$--Morse $(\la'', \ka'')$--quasi-geodesic, then $\gamma$ is $\tilde{N}'$--Morse, where $\tilde{N}'$ only depends on $\tilde{N}, N, \la, \ka, r$ and $(\la'', \ka'')$. 
\end{lemma}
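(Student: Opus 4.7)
The plan is to handle this in two stages. First I would verify that $\pgot$ is a locally Morse global quasi-geodesic by applying Lemma~\ref{lemma:stiching-quasi-geodesics1} twice, once on each side of $\gamma$. Second, I would establish the moreover clause by lifting an arbitrary quasi-geodesic with endpoints on $\gamma$ to one with endpoints $y_1, y_2$ on $\pgot$, applying the hypothesised $\tilde{N}$-Morse property, and then confining the resulting neighbourhood back to $\gamma$ alone.

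For the first stage, I apply Lemma~\ref{lemma:stiching-quasi-geodesics1} with input $(\eta_1, \gamma, y_1)$ at local scale $\tau(L)$ (where $\tau$ is the function from that lemma), obtaining that $[y_1, v]$ is $\tau(L)$-locally $N_1'$-Morse and contained in a $\delta$-neighbourhood of $\eta_1\vert_{y_1 z_1} \cup \gamma$. I then apply the lemma again to $(\eta_2^{-1}, [y_1, v]^{-1}, y_2)$ at local scale $L$, which is legal because $d(v, x_2) \leq r$ is exactly the required hypothesis. The conclusion is that $[y_1, y_2]$ is $L$-locally $N''$-Morse and contained in a $2\delta$-neighbourhood of $\eta_1\vert_{y_1 z_1} \cup \gamma \cup \eta_2\vert_{x_2 y_2}$. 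This forces the choice $g(L) := \tau(\tau(L))$, with the input Morse gauge $M$ propagated through the two applications. To establish that $\pgot$ is a global $(\la', \ka')$-quasi-geodesic, I do a case analysis on pairs of points $p, q \in \pgot$: within-piece pairs are immediate because each piece is a geodesic, and cross-piece pairs use the $2\delta$-neighbourhood inclusion together with the triangle inequality to bound $d(p, q)$ in terms of the parameter distance, with an additive error that grows linearly in $d(u, v) + r$. Choosing $f(d(u,v), L)$ strictly larger than any such threshold makes $d(y_1, z_1)$ and $d(x_2, y_2)$ dominate the errors. Local Morseness of $\pgot$ at scale $L$ then follows since the outer pieces are $N$-Morse as subsegments of $N$-Morse geodesics, $[y_1, y_2]$ is $L$-locally $N''$-Morse, and the junctions are handled by Lemma~\ref{lemma:monster}(\ref{prop:concatentaion}).

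For the moreover clause, I suppose $\pgot$ is $\tilde{N}$-Morse and take a $(Q, q)$-quasi-geodesic $\alpha$ with endpoints $a = \gamma(s_1)$ and $b = \gamma(s_2)$. I form the extended path
\[
\tilde{\alpha} := \eta_1\vert_{y_1 z_1} \ast [z_1, u] \ast \gamma\vert_{[0, s_1]} \ast \alpha \ast \gamma\vert_{[s_2, \abs{\gamma}]} \ast [v, x_2] \ast \eta_2\vert_{x_2 y_2}
\]
running from $y_1$ to $y_2$. Using the short transition geodesics of length at most $r$, together with repeated applications of Lemma~\ref{lem:_wiggling_ends_of_QG} and the fact that the outer $\eta$-pieces are long by the choice of $f$, I verify that $\tilde{\alpha}$ is a $(Q', q')$-quasi-geodesic with constants depending only on $(Q, q), (\la, \ka), N$ and $r$. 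Since $y_1, y_2 \in \pgot$ and $\pgot$ is $\tilde{N}$-Morse, I obtain $\tilde{\alpha} \subseteq \nn_{\tilde{N}(Q', q')}([y_1, y_2])$, and hence $\alpha$ lies in a uniform neighbourhood of $\eta_1\vert_{y_1 z_1} \cup \gamma \cup \eta_2\vert_{x_2 y_2}$. The endpoints of $\alpha$ lie on $\gamma$, so they are at distance at least $f(d(u,v), L) - r - d(u,v)$ from both $\eta_1\vert_{y_1 z_1}$ and $\eta_2\vert_{x_2 y_2}$; choosing $f$ large enough makes the three relevant neighbourhoods essentially disjoint outside a bounded region, so by continuity $\alpha$ remains in the component near $\gamma$, producing the desired Morse gauge $\tilde{N}'$.

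The main obstacle is the quasi-geodesic verification of $\tilde{\alpha}$ in the second stage: Lemma~\ref{lem:_wiggling_ends_of_QG} must be invoked at each of several junctions with the constants propagated uniformly in $(Q, q)$, and one must carefully control how $\alpha$ interacts both with the short transition pieces and with the long tails. The final confinement of $\alpha$ to a neighbourhood of $\gamma$ (rather than of the full $\eta_1 \cup \gamma \cup \eta_2$) is also delicate; the hypothesis that $\gamma$ is not $N_{\mathrm{not}}$-Morse is used here to ensure the scale separation we need is achievable and that the conclusion remains non-vacuous, with $N_{\mathrm{not}}$ calibrated large relative to $\tilde{N}'$.
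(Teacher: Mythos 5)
Your first stage correctly mirrors the paper: both proofs apply Lemma~\ref{lemma:stiching-quasi-geodesics1} twice to conclude that $[y_1,y_2]$ is locally Morse and confined to a $\delta$--neighbourhood of $\eta_1\vert_{y_1 z_1}\cup\gamma\cup\eta_2\vert_{x_2 y_2}$. From that point on, however, there is a genuine gap, and you have misread what the hypothesis ``$\gamma$ is not $N_{\mathrm{not}}$--Morse'' is for. That hypothesis is not about making the conclusion of the \emph{moreover} clause non-vacuous, and $N_{\mathrm{not}}$ is certainly not calibrated in terms of $\tilde N'$ (it is fixed before any $\tilde N$ is chosen). Its role is in the first stage: if $\dist(\eta_1,\eta_2)\le 2\delta$, then several applications of the Morse-triangle lemma (Lemma~\ref{lemma:monster}\eqref{prop:triangles}) show that $\gamma$ would be $N_0$--Morse for a gauge $N_0$ depending only on $N,\la,\ka,r,\delta$; setting $N_{\mathrm{not}}=N_0$, the non-Morse hypothesis on $\gamma$ then forces $\dist(\eta_1,\eta_2)>2\delta$. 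This separation is precisely what your ``cross-piece pairs'' step needs and does not supply. Without it, the first point $y$ on $[y_1,y_2]$ that enters the $\delta$--neighbourhood of $\gamma\cup\eta_2$ could be near $y_1$ (if $\eta_2$ nearly touches $\eta_1$), and then $\pgot$ can double back across the junction, so the lower quasi-geodesic inequality fails at the scale $L$ and no amount of making $f$ large repairs this; the threshold you propose for $f$ controls $\dist(y_1,z_1)$ and $\dist(x_2,y_2)$, not the gap between $\eta_1$ and $\eta_2$. The paper's argument instead uses $\dist(\eta_1,\eta_2)>2\delta$ to pin $y$ near $\eta_1\vert_{y_1z_1}$ and to lower-bound $\dist(y,y_1)\ge L$, then exploits that $\eta_1\vert_{x_1y_1}$ and $\eta_1\vert_{y_1z_1}$ diverge at unit speed along the geodesic $\eta_1$ to get $(\la',\ka')=(1,2\delta)$ \emph{locally} at scale $L$ (not globally, which is all that is claimed or used later).

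Your treatment of the \emph{moreover} clause is also on the wrong track, independently of the first issue. You form $\tilde\alpha$ from $y_1$ to $y_2$ through $\alpha$ and conclude that $\alpha$ lies in a neighbourhood of $\eta_1\vert_{y_1z_1}\cup\gamma\cup\eta_2\vert_{x_2y_2}$, then try to confine $\alpha$ to $\gamma$ by arguing that the endpoints of $\alpha$ are far from $\eta_1\vert_{y_1z_1}$ and $\eta_2\vert_{x_2y_2}$. This is false in general: the endpoints $\gamma(s_1),\gamma(s_2)$ can be arbitrarily close to $u$ and $v$, which are within $r$ of $z_1\in\eta_1\vert_{y_1z_1}$ and $x_2\in\eta_2\vert_{x_2y_2}$, so the claimed distance lower bound $f(\dist(u,v),L)-r-\dist(u,v)$ does not hold, and the ``three essentially disjoint neighbourhoods'' picture collapses near the endpoints of $\gamma$. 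The paper sidesteps this entirely: once $\pgot$ is known to be $\tilde N$--Morse, the gauge of $\gamma$ is extracted by chaining Morse triangles (Lemma~\ref{lemma:monster}\eqref{prop:triangles}) through the junction geodesics $[z_1,u]$, $[v,x_2]$ and the Morse geodesics $\eta_1\vert_{y_1z_1}$, $\eta_2\vert_{x_2y_2}$, with no need to confine a free quasi-geodesic to $\gamma$ directly.
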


\begin{figure}[ht]
    \centering
    \includegraphics[width=\linewidth]{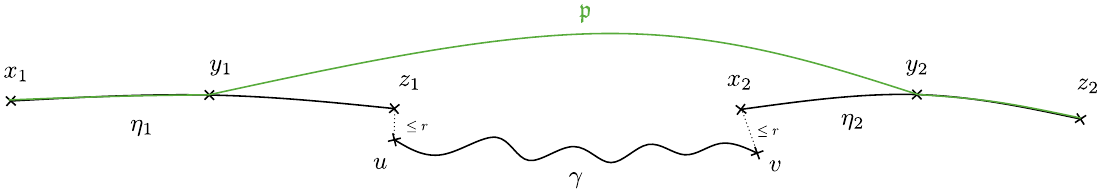}
    \caption{Depiction of Lemma \ref{lemma:stiching-quasi-geodesics2}}
    \label{fig:stiching2}
\end{figure}

\begin{proof}
    Applying Lemma \ref{lemma:stiching-quasi-geodesics1} first to $[y_1, v]$ and then to $[y_1, y_2]$ yields the existence of a function $g$, a Morse gauge $N''$ and a constant $\delta$ such that if $\gamma$ is $g(L)$--locally $M$--Morse, then $[y_1, y_2]$ is $L$-locally $N''$--Morse and $[y_1, y_2]$ is contained in the $\delta$--neighbourhood of $\eta_1 \vert_{y_1 z_1}\cup \gamma \cup \eta_2 \vert_{x_2 y_2}$. 

    If $\dist(\eta_1, \eta_2)\leq 2\delta$, then applying Lemma \ref{lemma:monster}\ref{prop:triangles} about Morse triangles multiple times yields a Morse gauge $N_0$ depending only on $N, \la, \ka, r$ and $\delta$ such that $\gamma$ is $N_0$--Morse. Defining $N_{\mathrm{not}} = N_0$ yields that $\dist(\eta_1, \eta_2)> 2\delta$.

    Next we show that $\pgot' = \eta_1\vert_{x_1y_1}\ast[y_1, y_2]$ is $L$--locally an $N'$--Morse $(\la', \ka')$--quasi-geodesic for a Morse gauge $N'$ and quasi-geodesic constants $(\la', \ka')$ which we will determine below. Let $y$ be the first point on $[y_1, y_2]$ which is in the closed $\delta$--neighbourhood of $\gamma \cup \eta_2$. By continuity, $y$ is in the closed $\delta$--neighbourhood of $\eta_1 \vert_{y_1 z_1}$. Since $\dist(\eta_1, \eta_2)>2\delta$, $y$ is not in the closed $\delta$--neighbourhood of $\eta_2$ and hence in the closed $\delta$--neighbourhood of $\gamma$. We now proceed to bound $\dist(y, y_1)$ from below. By the triangle inequality, we have that 
    \begin{align*}
        \dist(y, y_1) &\geq \dist(y_1, \gamma) - \delta,\\
                &\geq \dist(y_1, z_1) - \la(\la\dist(u, v) + \ka) - \ka - \delta - r.
    \end{align*}
    Thus, for large enough $f$, we have that $\dist (y, y_1)\geq L$. Define $(\la', \ka') = (1, 2\delta)$. To prove that $\pgot'$ is $L$-locally a $(\la', \ka')$--quasi-geodesic it suffices to show that $\dist (\pgot'(s), \pgot'(t))\geq \vert s  - t\vert - 2\delta$ for all $s, t$ with $\pgot'(s)\in \eta_1 \vert_{x_1 y_1}$ and $\pgot'(t)\in [y_1, y_2]$ with $\dist(\pgot'(t), y_1)\leq L$. Let $s, t$ be such constants. Since $\dist(y, y_1)\geq L$, there exists $c$  on $\eta_1 \vert_{y_1 z_1}$ with $\dist(\pgot'(t), c)\leq \delta$ and hence $\dist(y_1, c)\geq \dist(y_1, \pgot'(t)) - \delta$. Observe that $\vert s - t \vert = \dist(\pgot'(s), y_1) + \dist (y_1, \pgot'(t))$, implying that $\dist(\pgot'(s),\pgot'(t)) \geq \dist(\pgot'(s), c) - \delta \geq \vert s - t \vert -2\delta$. Hence $\pgot'$ is indeed $L$--locally a $(\la', \ka')$--quasi-geodesic. Furthermore, $[y_1, y_2]$ is $L$-locally $N''$--Morse and $\eta_1 \vert _{x_1 y_1}$ is $N$--Morse. Thus, by Lemma \ref{lemma:monster}\eqref{prop:concatentaion} about concatenations there exists a Morse gauge $N'$ only depending on $N, N'', \la$ and $\ka$ such that $\pgot'$ is $L$-locally $N'$--Morse.

    Analogously, we can show that $[y_1, y_2]\ast \eta_2 \vert _{y_2 z_2}$ is $L$--locally an $N'$--Morse $(\la', \ka')$--quasi-geodesic. Lastly, since $\dist(y_1, y_2)\geq L$ this shows that $\pgot$ as a whole is $L$--locally an $N'$--Morse $(\la', \ka')$--quasi-geodesic.

    It remains to prove the moreover part of the statement, which follows from repeatedly applying Lemma \ref{lemma:monster}\eqref{prop:triangles} about Morse triangles.
\end{proof}

\subsection{Weak Morse local-to-global} We use the above lemmas to construct specific concatenations in spaces with the weak Morse local-to-global property. This is the key lemma in the proof of Theorem~\ref{thmi:non-sigma}.

\begin{lemma}\label{lemma:stiching-quasi-geodesics3}
    Let $X$ be a geodesic metric space which satisfies the weak MLTG property and whose isometry group acts coboundedly on itself. Let $M$ and $N$ be Morse gauges and let $(\la, \ka)$ be quasi-geodesic constants. There exists a Morse gauge $N_{\mathrm{not}}$, a constant $L_{\mathrm{min}}$, and a map $\Phi$ between Morse gauges such that the following holds. Suppose there exists an $N$--Morse geodesic ray $\eta$. Then for each sequence $\{\gamma_i\}_{i\in \N}$ of $(\la, \ka)$--quasi-geodesic segments which are $L_i$-locally $M$--Morse but not $N_{\mathrm{not}}$--Morse and with $L_i\geq L_{\mathrm{min}}$ and $\lim_{i\to\infty} L_i = \infty$, there exists a Morse geodesic ray $\zeta$ such that for all Morse gauges $\tilde{N}$, if $\zeta$ is $\tilde{N}$--Morse, then $\gamma_i$ is $\Phi(\tilde{N})$--Morse for all $i$.
\end{lemma}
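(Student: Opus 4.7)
The plan is to build a single Morse geodesic ray $\zeta$ starting at a fixed basepoint $\bp$ that passes sufficiently close to isometric copies of each $\gamma_i$, so that the moreover clause of Lemma~\ref{lemma:stiching-quasi-geodesics2} can be used to transfer the Morse gauge of $\zeta$ back to each $\gamma_i$. As preparation, fix $N_{\mathrm{not}}$, the functions $f$ and $g$, the Morse gauge $N'$, and the quasi-geodesic constants $(\la', \ka')$ given by Lemma~\ref{lemma:stiching-quasi-geodesics2} applied to $M$, $N$, $(\la, \ka)$, and some fixed $r>0$ large enough to absorb the cobounded action of $\mathrm{Isom}(X)$. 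Choose $L_{\mathrm{min}}$ so that $L_i \geq L_{\mathrm{min}}$ yields a strictly increasing sequence $L_i^* \to \infty$ with $g(L_i^*) \leq L_i$ for every $i$.

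Next I would construct an infinite path $\pgot$ starting at $\bp$ by concatenating, in order, a subsegment of $\eta$, an isometric copy $\gamma_1'$ of $\gamma_1$, a subsegment of an isometric translate of $\eta$, an isometric copy $\gamma_2'$ of $\gamma_2$, and so on. The cobounded action allows me to arrange each $\gamma_i'$ to meet its flanking $\eta$-pieces within distance $r$, and I would choose each intermediate $\eta$-segment to have length at least $f(\dist(u_i', v_i'), L_i^*)$, where $u_i', v_i'$ are the endpoints of $\gamma_i'$. Applying Lemma~\ref{lemma:stiching-quasi-geodesics2} at each insertion separately, using the hypothesis that $\gamma_i$ is not $N_{\mathrm{not}}$-Morse to activate the lemma, shows that the prefix of $\pgot$ ending just past $\gamma_n'$ is an $(\la', \ka')$-quasi-geodesic which is $L_n^*$-locally $N'$-Morse, with all constants uniform in $i$.

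For each $n$, take $\zeta_n$ to be a geodesic from $\bp$ to an endpoint of the $n$-th prefix of $\pgot$. Since $\zeta_n$ is a $(1,0)$-quasi-geodesic with endpoints on a local-Morse quasi-geodesic, the weak MLTG property forces $\zeta_n$ into a uniform neighbourhood of $\pgot$. A diagonal extraction then yields a limiting geodesic ray $\zeta$ that is uniformly close to $\pgot$. Now suppose $\zeta$ is $\tilde N$-Morse. For each $i$, the subsegment of $\zeta$ shadowing the stitched triple $\eta$-piece, $\gamma_i'$, $\eta$-piece in $\pgot$ is a sub-geodesic of $\zeta$ and thus $\tilde N$-Morse. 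Lemma~\ref{lemma:monster} (triangles and equivalence of quasi-geodesics at bounded Hausdorff distance) transfers this into an $N^\sharp$-Morse $(\la'', \ka'')$-quasi-geodesic bound on the stitched triple itself, with $N^\sharp$ and $(\la'', \ka'')$ depending only on $\tilde N$ and the fixed data. The moreover clause of Lemma~\ref{lemma:stiching-quasi-geodesics2} now gives that $\gamma_i$ is $\Phi(\tilde N)$-Morse for a function $\Phi$ independent of $i$.

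The main obstacles are twofold. First, choosing the intermediate $\eta$-segment lengths large enough to meet $f(\dist(u_i', v_i'), L_i^*)$ for every $i$ while preserving a common local-Morse scale on each finite prefix; this is possible precisely because $L_i \to \infty$, so one can afford a slowly growing $L_i^*$. Second, and more delicate, is extracting the limiting ray $\zeta$ from the sequence $(\zeta_n)$: this is where the weak MLTG property is essential, as it confines every $\zeta_n$ to a uniform neighbourhood of the fixed path $\pgot$, thereby making a diagonal/compactness argument viable.
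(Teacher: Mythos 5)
Your outline captures the right architecture — stitch translates of $\gamma_i$ between translates of $\eta$, apply Lemma~\ref{lemma:stiching-quasi-geodesics2} locally, and transfer a Morse gauge from the resulting geodesic ray back to each $\gamma_i$ via the moreover clause — and the closing transfer step is sound. However, there is a serious gap in the middle.

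The lemma asserts the existence of a \emph{Morse} geodesic ray $\zeta$, and the technical heart of the proof is precisely showing that the constructed object is globally Morse. You never do this. Your argument produces a geodesic ray $\zeta$ as a diagonal limit of finite geodesics $\zeta_n$ and then jumps straight to ``Now suppose $\zeta$ is $\tilde N$--Morse.'' If $\zeta$ happens not to be Morse, the conditional conclusion is vacuous and the lemma is not proved. The paper handles this in Claim~\ref{claim:stich}: for each quasi-geodesic pair $(Q,q)$, since $L_i \to\infty$, the tail $\zeta'_{i_{Q,q}}$ of the stitched path has local scale large enough to be $(M_{Q,q}, Q, q)$--Morse by the weak MLTG property, while the remaining prefix $\pgot_{Q,q}$ is a \emph{finite} quasi-geodesic segment and so trivially $(N_{Q,q}, Q, q)$--Morse; one then glues these two pieces across a single point using Lemma~\ref{lem:_wiggling_ends_of_QG}. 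This is where the hypothesis $L_i \to \infty$ is genuinely used to upgrade ``locally Morse at growing scales'' to ``globally Morse'', and it is the step your proposal omits.

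A second, smaller, problem is the extraction of $\zeta$ itself. The hypotheses include only that $X$ is geodesic, satisfies weak MLTG, and has a cobounded isometry group; $X$ need not be proper. A diagonal/compactness (Arzel\`a--Ascoli) argument on the geodesics $\zeta_n$ therefore does not go through. The paper avoids this entirely: once the stitched path $\zeta_1'$ is shown to be a Morse quasi-geodesic, the existence of a geodesic ray at bounded Hausdorff distance sharing the basepoint follows from general facts about Morse quasi-geodesics, with no properness needed. You should first prove $\pgot$ (or the paper's version of it, in which one uses geodesics $[y_i, x_{i+1}]$ passing near $\gamma_i'$ rather than the segments $\gamma_i'$ themselves, so that the concatenation is genuinely a quasi-geodesic) is a Morse quasi-geodesic, and only afterwards extract $\zeta$.
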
 
\begin{proof}
Let $N', N_{\mathrm{not}}, (\la', \ka')$ and $f,g$ be the Morse gauges, quasi-geodesic constants and functions from Lemma \ref{lemma:stiching-quasi-geodesics2} applied to the Morse gauges $M, N$ and quasi-geodesic constants $(\la, \ka)$. By potentially increasing $N'$, we may assume that $N'\geq N$. Let $L_{\mathrm{quasi}}, \la'', \ka''$ be constants such that every $L_{\mathrm{quasi}}$-locally $N'$--Morse $(\la', \ka')$--quasi-geodesic is a $(\la'', \ka'')$--quasi-geodesic. Let $L_{\mathrm{min}} = g(L_{\mathrm{quasi}})$. For every quasi-geodesic pair $(Q, q)$, let $L_{Q, q}$ and $M(Q, q)$ be constants such that every $L_{Q,q}$--locally $N'$--Morse $(\la', \ka')$--quasi-geodesic is a $(M(Q, q), Q, q)$--Morse $(\la'', \ka'')$--quasi-geodesic.

Let $\{\gamma_i\}_{i\in \N}$ be a sequence as in the statement. For every $i\geq 1$ let $D_i = f(\dist (u_i, v_i), L_i)$, where $u_i$ and $v_i$ are the endpoints of $\gamma_i$.

    Define $\eta_1' = \eta$ and let $x_1 =y_1$ be the starting point of $\eta_1'$. Let $z_1$ be a point on $\eta_1'$ such that $\dist (y_1, z_1)\geq D_1$. Define $\eta_1$ as $\eta_1' \vert_{y_1 z_1}$. Lastly, define $\gamma_1'$ as a translate of $\gamma_1$ with endpoints $u_1', v_1'$ such that $\dist (u_1', z_1)\leq r$. For $i\geq 1$ inductively define the following, which is depicted in Figure \ref{fig:stiching3}.

\begin{figure}[ht]
    \centering
    \includegraphics[width=\linewidth]{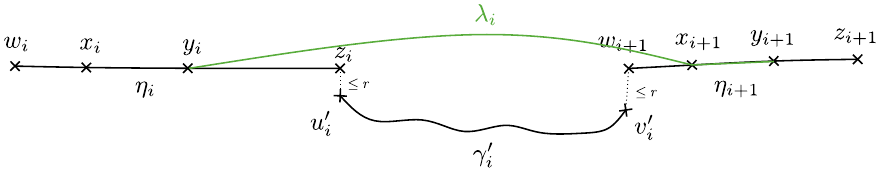}
    \caption{Proof of Lemma \ref{lemma:stiching-quasi-geodesics3}}
    \label{fig:stiching3}
\end{figure}
    \begin{itemize}
        \item $\eta_{i+1}'$ as a translate of $\eta$ starting in the $r$--neighbourhood of $v_i'$
        \item $w_{i+1}$ as the starting point of $\eta_{i+1}'$
        \item $x_{i+1}$ as a point on $\eta_{i+1}'$ with $\dist (w_{i+1}, x_{i+1}) = D_i$.
        \item $y_{i+1}$ as a point on $\eta_{i+1}'$ with $\dist (w_{i+1}, y_{i+1}) = D_i + L_{\mathrm{quasi}}+i$, and hence $\dist (x_{i+1}, y_{i+1}) = L_{\mathrm{quasi}}+i$. 
        \item $z_{i+1}$ as a point on $\eta_{i+1}'$ with $\dist (w_{i+1}, z_{i+1}) = D_i + L_{\mathrm{quasi}}+i +D_{i+1}$ and hence $\dist (y_{i+1}, z_{i+1}) = D_{i+1}$.
        \item $\eta_{i+1}$ as $\eta_{i+1}'\vert_{w_{i+1} z_{i+1}}$. 
        \item $\gamma_{i+1}'$ as a translate of $\gamma_{i+1}$ with endpoints $u_{i+1}', v_{i+1}'$ and such that $\dist (u_{i+1}', z_{i+1})\leq r$.
    \end{itemize}

For $i\geq 1$ define $\zeta_i = [y_i, x_{i+1}]\ast[x_{i+1}, y_{i+1}]$. And for each $i$ define $\zeta_i'$ as the infinite concatenation 
\begin{align*}
    \zeta_i' = \zeta_i\ast \zeta_{i+1}\ast\zeta_{i+2}\ast \ldots
\end{align*}

Next we show that $\zeta'_1$ is a quasi-geodesic. Indeed, since $L_i \geq g(L_{\mathrm{quasi}})$ for all $i$, the path $[x_i, y_i]\ast[y_i, x_{i+1}]\ast[x_{i+1}, y_{i+1}]$ is an $L_{\mathrm{quasi}}$--locally $N'$--Morse $(\la', \ka')$--quasi-geodesic by Lemma \ref{lemma:stiching-quasi-geodesics2}. Since $\dist (x_i, y_i)\geq L_{\mathrm{quasi}}$, the above implies that $\zeta_{1}'$ is $L_{\mathrm{quasi}}$--locally an $N'$--Morse $(\la', \ka')$--quasi-geodesic. By the choice of $L_{\mathrm{quasi}}$, $\zeta_{1}'$ is a $(\la'', \ka'')$--quasi-geodesic. 

Now we show that $\zeta_1'$ is $N_0$--Morse for a Morse gauge $N_0$ which we are about to construct. Let $(Q, q)$ be a quasi-geodesic pair. Since $\lim_{i\to \infty} L_i = \infty$, there exists $i_{Q, q}\geq L_{Q, q}$ such that for all $i\geq i_{Q, q}$ we have that $L_i\geq g(L_{Q, q})$. Hence by Lemma \ref{lemma:stiching-quasi-geodesics2}, $[x_i, y_i]\ast[y_i, x_{i+1}]\ast[x_{i+1}, y_{i+1}]$ is $L_{Q, q}$--locally an $N'$--Morse $(\la', \ka')$--quasi-geodesic for all $i\geq i_{Q, q}$. Since $i_{Q, q}\geq L_{Q, q}$ and $\dist (x_{i}, y_{i})\geq i$ we have that $\zeta_{i_{Q, q}}'$ is also $L_{Q, q}$--locally an $N'$--Morse $(\la', \ka')$--quasi-geodesic. Hence, by the definition of $L_{Q, q}$, $\zeta_{i_{Q, q}}'$ is $(M_{Q, q}, Q, q)$--Morse. Consider $\pgot_{Q, q} = \zeta_1\ast \ldots \ast \zeta_{i_{Q, q}-1}$. We have that $\zeta_1' = \pgot_{Q, q}\ast \zeta_{i_{Q, q}}'$. Further, since $\pgot_{Q, q}$ is a finite subsegment of $\zeta_1'$, there exists a constant $N_{Q, q}$ such that $\pgot$ is $(N_{Q, q}, Q, q)$--Morse. Define 
\begin{align*}
    N_0(Q/3, q) = \max\{N_{Q, q}, M_{Q, q}\}. 
\end{align*}

\begin{claim}\label{claim:stich}
    The quasi-geodesic $\zeta_1'$ is $N_0$--Morse.
\end{claim}

\textit{Proof of Claim \ref{claim:stich}.} 
Let $\xi$ be a $(Q/3, q)$--quasi-geodesic with endpoints $a$ and $b$ on $\zeta_1'$. If both its endpoints are on $\pgot_{Q, q}$, then, since $\pgot_{Q, q}$ is $(N_{Q, q}, Q, q)$--Morse $\xi$ is contained in the $N_{Q, q}$--neighbourhood of $\zeta_1'\vert_{a b}$. If both endpoints $a$ and $b$ are contained in $\zeta_{i_{Q, q}}'$, then, since $\zeta_{i_{Q, q}}'$ is $(M_{Q, q}, Q, q)$--Morse, $\xi$ is contained in the $M_{Q, q}$ neighbourhood of $\zeta_1' \vert_{a b}$. It remains to show that $\xi$ stays in the $N_0 (Q/3, q)$--neighbourhood of $\zeta_1 \vert _{[a, b]}$ if $a$ lies on $\pgot_{Q, q}$ and $b$ lies on $\zeta_{i_{Q, q}}'$. Let $c$ be the endpoint of $\pgot_{Q, q}$ (and hence the starting point of $\zeta_{i_{Q, q}}'$). Let $c'$ be a closest point on $\xi$ to $c$. By Lemma~\ref{lem:_wiggling_ends_of_QG} the paths $\xi \vert _{a c'}\ast [c', c]$ and $[c, c'] \ast\xi \vert _{c' b}$ are $(Q, q)$--quasi-geodesics. By the arguments above, they are contained in the $N_{Q, q}$ and $M_{Q, q}$--neighbourhood of $\zeta_1' \vert_{a c}$ and $\zeta_1' \vert_{c b}$ respectively. Since $N_0(Q/3, q) = \max\{N_{Q, q}, M_{Q, q}\}$, the statement follows \hfill$\blacksquare$

We have showed that $\zeta_1'$ is a Morse $(\la'', \ka'')$--quasi-geodesic, hence there exists a geodesic $\zeta$ with the same starting point which is at bounded Hausdorff distance from $\zeta_1'$. Note that $(\la'', \ka'')$ only depend on $X, M, \la, \ka$ and not the quasi-geodesics $\gamma_i$. Assume that $\zeta$ is $\tilde{N}$--Morse for some Morse gauge $\tilde{N}$. By Lemma~\ref{lemma:monster}\eqref{prop:equivalence}, $\zeta_1'$ is $\tilde{N}''$--Morse, where $\tilde{N}''$ only depends on $\tilde{N}, \la''$ and $\ka''$. Since $[x_i, y_i]\ast[y_i, x_{i+1}]\ast[x_{i+1}, y_i]$ is a subsegment of $\zeta_1'$, it is also $\tilde{N}''$--Morse. By Lemma~\ref{lemma:stiching-quasi-geodesics2}, $\gamma_i$ is $\tilde{N}'$--Morse, where $\tilde{N'}$ only depends on $\tilde{N}'', N, M, X, \la$ and $\ka$, which concludes the proof.
\end{proof}

Now we are ready to prove Theorem \ref{thmi:non-sigma}.

\begin{proof}[Proof of Theorem \ref{thmi:non-sigma}]
We assume that $X$ satisfies the weak MLGT property but not the MLTG property. Observe that any space with empty Morse boundary satisfies the MLTG, thus $\mb X$ is not empty, and there exists a ray $\eta$ which is $N$--Morse for some Morse gauge $N$. We want to show that $\mb X$ is not $\sigma$-compact. More precisely, we will prove that $X$ does not have an exhaustion.

Assume by contradiction that $X$ has an exhaustion $(M_n)_{n\in \N}$. Let $(M, \la', \ka')$ be a triple that fails the Morse local-to-global property. Since $X$ satisfies the weak MLTG, there exist constants $L_0, \la, \ka$ such that all $L_0$--locally $M$--Morse $(\la, \ka')$--quasi-geodesics are $(\la, \ka)$--quasi-geodesics. Thus we can apply Lemma \ref{lemma:stiching-quasi-geodesics3} to the Morse gauges $M$ and $N$ and the quasi-geodesic constants $(\la, \ka)$ to get a Morse gauge $N_\mathrm{not}$, constant $L_{\mathrm{min}}$ and a map $\Phi$ between Morse gauges.

For each $i\geq 1$, define $L_i = \max\{i, L_{\mathrm{min}}, L_0\}$. Since $(M, \la', \ka')$ fails the Morse local-to-global property, there exists a path $\gamma_i$ which is $L_i$--locally an $M$--Morse $(\la', \ka')$-quasi-geodesic but not a $\max\{N_{\mathrm{not}}, \Phi(M_i)\}$--Morse $(\la, \ka)$--quasi-geodesic. Since $L_i \geq L_0$,  $\gamma_i$ is a $(\la, \ka)$--quasi-geodesic and hence the failure of the MLTG property is that $\gamma_i$ is not $\Phi(M_i)$--Morse. Moreover, by potentially replacing $\gamma_i$ with a subsegment, we can assume that $\gamma_i$ is finite. Indeed, if all finite subsegments of $\gamma_i$ were $\Phi(M_i)$--Morse, $\gamma_i$ itself would be $\Phi(M_i)$--Morse.

Let $\zeta$ be the Morse geodesic obtained from Lemma \ref{lemma:stiching-quasi-geodesics3} applied to the sequence $\{\gamma_i\}$. Since $\zeta$ is Morse and $(M_n)_{n\in\N}$ is an exhaustion, there exists a Morse gauge $M_n$ such that $\zeta$ is $M_n$--Morse, implying that $\gamma_n$ is $\Phi(M_n)$--Morse. However, we precisely chose $\gamma_n$ to not be $\Phi(M_n)$--Morse, which is a contradiction. The statement follows.
\end{proof}

\appendix

\section{A survey on the MLTG property}\label{sec:survey}
One of the current programs in geometric group theory is to understand groups that are not Gromov-hyperbolic, but that present \say{negative curvature features}. The precise meaning of \say{negative curvature features} gave rise to an archipelago of different definitions, and the past years saw incredible progress in the difficult task of mapping and understanding the relations and limitations of the various notions of negative curvature outside hyperbolic groups. 

The main notion we are concerned with is the notion of \emph{Morse} quasi-geodesics (Definition~\ref{defn:Morse_QG}). The slogan is that  a quasi-geodesic is Morse if it behaves like a quasi-geodesic in a hyperbolic group. This is justified by a theorem of Bonk (\cite{Bonk:quasigeodesic}) stating that a geodesic metric space is hyperbolic if and only if there exists a Morse gauge $M$ so that all of its geodesics are $M$--Morse. The systematic study of Morse quasi-geodesic was formalized in \cite{Cordes:Morse}, and became a very popular topic, allowing to refine the quasi-isometric classification of groups and subgroups \cite{OlshankiiOsinSapir:lacunary}, \cite{DurhamTaylor:convexcocompact}, \cite{aougabdurhamtaylor:pulling}, \cite{CharneyCordesSisto:complete}, \cite{zbinden:morse}, \cite{fioravantikarrersistozbinde:cech}. Indeed, a key property of Morse quasi-geodesics is that they are preserved by quasi-isometric embeddings: if $\gamma \colon I \to X $ is a Morse quasi-geodesic and $f \colon X \to Y$ is a quasi-isometric embedding, then $f \circ \gamma$ is a Morse quasi-geodesic of $Y$. Thus, if two metric spaces are quasi-isometric, their respective sets of Morse quasi-geodesics should \say{display the same pattern}. This is made formal by \cite[Main theorem]{Cordes:Morse} stating that two quasi-isometric spaces have homeomorphic Morse boundaries. 

Since Morse quasi-geodesics behave as quasi-geodesics in hyperbolic spaces, it is natural to expect that theorems about geodesics in hyperbolic spaces should naturally generalize to Morse quasi-geodesics in general geodesic metric spaces. However, \textbf{this turns out not to be true}. For instance, in \cite{OOS}, the authors produced examples of finitely generated, non-cyclic, torsion-free groups, where every group element is Morse, meaning that for every $g\in G$ the orbit map $\mathbb{N}\to g^n \cdot x$ is a Morse quasi-geodesic, but the group itself does not have free subgroups. This tends to complicate things. A basic property of hyperbolic groups is that one can apply the ping-pong lemma: if two elements are not contained in the same cyclic group, appropriate powers of them generate a free group. However, in the group of \cite{OOS} one cannot do so. Note that the ping-pong lemma only requires to consider two elements, so they will both be Morse for the same Morse gauge. To conclude, although Morse quasi-geodesics are very similar to quasi-geodesics in  hyperbolic spaces, \textbf{even very basic proofs cannot be translated to Morse quasi-geodesics in geodesic metric spaces}. 

Introduced in \cite{RussellSprianoTran:thelocal}, the Morse local-to-global property (Definition~\ref{defn:MLTG}) appears to address the above issue in a satisfactory manner, and thus we have a moral slogan: 
\[\textit{In a MLTG space, Morse quasi-geodesics behaviour is closer to the one in hyperbolic spaces.}\]
The present appendix is a survey of results on the MLTG property intended to clarify why the above slogan is reasonable, and what the main open questions and goals in the field are. Before we dive into it, let us briefly review some assets and potential drawbacks of the MLTG property.

\textbf{Assets}
\begin{enumerate}
    \item The MLTG property is satisfied by a very large class of interesting examples; up to now, there is no known finitely presented counterexample. 
    \item Many theorems require the assumption of the MLTG property, and are false without it. This makes MLTG a meaningful property.
\end{enumerate}

\textbf{Drawbacks}
\begin{enumerate}
    \item Since the property is satisfied by so many classes of groups, it can only imply properties that are satisfied by all the groups in these classes. 
    \item The MLTG property is \emph{only concerned with Morse quasi-geodesics}. It does not have an impact, \textit{a priori}, on phenomena unrelated to some form of Morse property. 
\end{enumerate}

\subsection{Examples and non-examples}

Note that many of the properties and results can be translated to quasi-geodesic metric spaces, namely spaces where every pair of points can be joined by a uniform quasi-geodesic. Following immediately from the definition we have the following basic property. 
\begin{lemma}
Let $X$ be a metric space with the MLTG property and let $Y$ be a metric space quasi-isometric to it. Then $Y$ has the MLTG property. 
\end{lemma}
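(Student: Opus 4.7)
The approach is to use the quasi-isometry to transport the question from $Y$ to $X$, apply the MLTG property in $X$, and then return to $Y$. Fix $(A,B)$--quasi-isometries $f \colon X \to Y$ and $g \colon Y \to X$ that are coarse inverses of each other, so that $f \circ g$ and $g \circ f$ are uniformly close to the identities on $Y$ and $X$ respectively.

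Given input data $(\la,\ka,M)$ for MLTG in $Y$, the first step is to produce matching \emph{local} data in $X$ using the standard fact that Morse quasi-geodesics are preserved by quasi-isometric embeddings, with controlled change of constants: if $\alpha$ is an $M$--Morse $(\la,\ka)$--quasi-geodesic in $Y$, then after taming by Lemma~\ref{lemma:tamingqg} the composition $g \circ \alpha$ becomes an $M_X$--Morse $(\la_X,\ka_X)$--quasi-geodesic in $X$, where $M_X,\la_X,\ka_X$ depend only on $A,B,\la,\ka,M$. Feeding $(\la_X,\ka_X,M_X)$ into the MLTG property of $X$ yields a scale $L_X$ and output constants $(\la_X',\ka_X',M_X')$ in $X$, which will eventually dictate the output in $Y$.

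Given a path $\pgot \colon [a,b] \to Y$ that is $L$--locally an $M$--Morse $(\la,\ka)$--quasi-geodesic for $L$ to be chosen, I construct a continuous path $\tilde\pgot$ in $X$ by sampling $\pgot$ at times $t_0 = a < t_1 < \cdots < t_n = b$ with $t_{i+1} - t_i$ equal to a small fixed constant, setting $\tilde\pgot(t_i) = g(\pgot(t_i))$, and connecting consecutive samples by geodesics in $X$ (suitably reparameterized). This $\tilde\pgot$ is continuous and lies within a bounded Hausdorff distance (depending only on $A,B$) of the image of $g \circ \pgot$. The key verification is that $\tilde\pgot$ is $L_X$--locally an $M_X$--Morse $(\la_X,\ka_X)$--quasi-geodesic in $X$: a subsegment of $\tilde\pgot$ of length at most $L_X$ shadows the image under $g$ of a subsegment of $\pgot$ of length at most $AL_X + O(1)$, which, provided $L \geq A L_X + O(1)$, is an $M$--Morse $(\la,\ka)$--quasi-geodesic in $Y$ by hypothesis, and then the preservation fact transfers the local Morse quasi-geodesic property to $\tilde\pgot$.

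Applying MLTG in $X$ to $\tilde\pgot$ upgrades it to a global $M_X'$--Morse $(\la_X',\ka_X')$--quasi-geodesic in $X$. Finally, $f \circ \tilde\pgot$ is a global Morse quasi-geodesic in $Y$ by the same preservation fact applied now to $f$, and $\pgot$ lies at uniformly bounded Hausdorff distance from $f \circ \tilde\pgot$ because $f \circ g$ is close to the identity on sample points and the geodesic interpolations have uniformly bounded length. Invoking Lemma~\ref{lemma:monster} on equivalence of close quasi-geodesics concludes that $\pgot$ is itself a global Morse quasi-geodesic in $Y$, with Morse gauge and quasi-geodesic constants depending only on the inputs $(\la,\ka,M)$ and on $A,B$ together with the MLTG data of $X$. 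The main technical difficulty is tracking the Morse \emph{gauge} --- not merely the quasi-geodesic constants --- through the quasi-isometric transport and the taming step, and organizing the quantifiers so that the final scale $L$ depends only on the chosen inputs.
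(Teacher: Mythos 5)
Your argument is correct and is exactly the standard transport argument that the paper implicitly invokes: the paper offers no proof at all, stating only that the lemma \say{follows immediately from the definition}, so your write-up is filling in details the authors chose to omit. The ingredients you use --- preservation of the Morse property under quasi-isometric embeddings (with controlled change of gauge), taming via Lemma~\ref{lemma:tamingqg}, the discrete-sampling-plus-geodesic-interpolation construction to get a continuous path in $X$, and Lemma~\ref{lemma:monster}\eqref{prop:equivalence} to transfer the Morse gauge back to $\pgot$ --- are all standard and all appear elsewhere in the paper. Two small points worth tightening if this were to be written out fully: first, make explicit that the interpolating geodesics in $\tilde\pgot$ must be reparameterized so that consecutive sample points sit at the original parameter values $t_i$ (otherwise the \say{$L_X$--local} comparison between domains of $\pgot$ and $\tilde\pgot$ is not literally a scaling by $A$); and second, note that the hypothesis should really be that $Y$ is a quasi-geodesic space (which the paper's remark preceding the lemma acknowledges), since the conclusion is vacuous otherwise and some of the taming machinery the paper relies on assumes geodesicity.
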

In particular, the MLTG property is well-defined for finitely generated groups. 

We are now interested in which groups and spaces satisfy the MLTG property. Firstly, observe that the MLTG property has the following form: \begin{align*}    
\text{Given a Morse gauge } M\text{ and quasi-geodesic constants }(\la, \ka)\text{ there exists }L\\ \text{ so that if a path is }L\text{--locally an } M\text{--Morse }(\la, \ka)\text{--quasi-geodesic then \dots.}\end{align*}

Note that if a space is so that for every Morse gauge $M$ there is a maximal length $L_M$ so that every quasi-geodesic that is $M$--Morse has length at most $L_M$, then the above property holds trivially, as choosing $L > L_M$ makes it a statement about the empty set. This motivates the following definition.

\begin{definition}[Morse limited]
    A metric space is \emph{Morse limited} if for every pair of quasi-geodesic constants $(\la, \ka)$ and Morse gauge $M$ there exists a constant $L$ so that if $\gamma \colon [a,b]\to X$ is an $M$--Morse $(\la, \ka)$--quasi-geodesic, then $\vert a-b \vert \leq L$.
\end{definition}

We start with a collection of theorems concerned with the examples satisfying the MLTG property.
\begin{theorem}\label{appendix:MLTG_examples}
The following groups and spaces satisfy the MLTG property. 
\begin{enumerate}
    \item \label{item:Morse_limited}Morse limited spaces \cite{RussellSprianoTran:thelocal}. This includes products with unbounded factors, spaces where all asymptotic cones do not have cut-points, and finitely generated groups where a single asymptotic cone does not have cut-points. The latter include  
    \begin{enumerate}
        \item Groups that satisfy a law, such as uniformly amenable groups, solvable groups, Burnside groups \cite{DrutuSapir:TreeGraded}.
        \item Groups with an infinite order central element \cite{DrutuSapir:TreeGraded}. 
        \item certain lacunary hyperbolic groups \cite{OOS}.
    \end{enumerate}
    \item Hyperbolic groups and spaces \cite{Gromov:hyperbolic}.
    \item Hierarchically hyperbolic groups and spaces \cite{RussellSprianoTran:thelocal}, such as Mapping Class Groups, Teichm\"uller spaces \cite{BehrstockHagenSisto:hierarchically}, extra-large type Artin groups \cite{HagenMartinSisto:extra-large}, extensions of lattice Veech groups \cite{DowdallDurhamLeiningerSisto:extensionsII}. 
    \item CAT(0) groups and spaces, in particular all cubical groups \cite{RussellSprianoTran:thelocal, CharneySultan:contracting}.
    \item \label{item:Morse_Dichotomous}In general, spaces where Morse is equivalent to being \emph{strongly contracting} \cite[Proposition~4.7]{sistozalloum:morse}, such as injective groups and spaces \cite{sistozalloum:morse}, and  properly convex domains \cite{IslamWeisman:morseproperties}. 
    \item \label{item:rel_to_MLTG}Groups hyperbolic relative to groups with the MLTG property \cite{RussellSprianoTran:thelocal}. 
    \item \label{item:3_mflds}All fundamental groups of closed three manifolds \cite{RussellSprianoTran:thelocal}.     
    \item All $C'(1/9)$--small-cancellation groups with $\sigma$-compact Morse boundary \cite{HeSprianoZbinden:sigmacompactnes}.
  \item Geodesic metric spaces with a bounded combing and $\sigma$-compact Morse boundary (Theorem~\ref{thm:main_intro}).
    \item \label{item:WMLTG} Weak MLTG spaces with $\sigma$--compact Morse boundary (Theorem~\ref{thmi:WMLTG}).
\end{enumerate}
\end{theorem}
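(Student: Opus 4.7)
The proof is organised as a case-by-case verification, with most items being direct imports from prior work and a few items arising from the main results of the present paper. My plan is to parse the list and, for each class, identify the minimal combination of known tools required to conclude the MLTG property of Definition~\ref{defn:MLTG}.

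I would first dispatch the items that follow essentially from vacuity or classical arguments. For item \ref{item:Morse_limited}, in a Morse limited space the implication in Definition~\ref{defn:MLTG} quantifies over an empty set once the scale $L$ exceeds the length bound on $M$--Morse quasi-geodesics, so the conclusion is automatic; the sub-examples (products with unbounded factors, spaces whose asymptotic cones lack cut-points, groups satisfying a law, groups with an infinite-order central element, certain lacunary hyperbolic groups) reduce to explicit Morse-limitedness verifications in \cite{DrutuSapir:TreeGraded, OOS, RussellSprianoTran:thelocal}. For hyperbolic spaces, every quasi-geodesic is uniformly Morse and local quasi-geodesics are global quasi-geodesics \cite{Gromov:hyperbolic}, so MLTG is immediate.

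For items 3 through 7 the plan is to invoke existing results verbatim. Items 3, 4, 6, and 7 are established in \cite{RussellSprianoTran:thelocal}, with the CAT(0) case drawing on the contracting-geodesic technology of \cite{CharneySultan:contracting}. Item \ref{item:Morse_Dichotomous} follows from the Morse $=$ strongly contracting dichotomy of \cite[Proposition 4.7]{sistozalloum:morse}, together with its instantiations for injective spaces \cite{sistozalloum:morse} and properly convex domains \cite{IslamWeisman:morseproperties}. Item \ref{item:rel_to_MLTG} (relative hyperbolicity over MLTG peripherals) and item \ref{item:3_mflds} (closed $3$--manifolds) are both treated in \cite{RussellSprianoTran:thelocal}; the $3$--manifold case in particular reduces via geometrisation to the relative-hyperbolicity statement.

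The genuinely new items are 8, 9, and \ref{item:WMLTG}. Item \ref{item:WMLTG} is exactly Theorem~\ref{thmi:non-sigma}, which I would cite directly. Item 9 follows by composing Theorem~\ref{thmi:WMLTG}, which gives weak MLTG from a bounded quasi-geodesic combing, with Theorem~\ref{thmi:non-sigma}, which upgrades weak MLTG to MLTG under $\sigma$--compactness of the Morse boundary and cobounded action of the isometry group; this is the content of Theorem~\ref{thm:main_intro}. For item 8, the $C'(1/9)$--small cancellation case, I would defer to \cite{HeSprianoZbinden:sigmacompactnes}: the underlying idea is that such groups admit a bounded combing coming from linear van Kampen diagrams, so Theorem~\ref{thmi:WMLTG} supplies weak MLTG, and the assumed $\sigma$--compactness then lets Theorem~\ref{thmi:non-sigma} upgrade this to MLTG.

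The main obstacle in assembling the proof is not mathematical but curatorial: one must check that each cited source actually establishes MLTG in the precise quantifier form of Definition~\ref{defn:MLTG}, rather than a superficially similar local-to-global statement (for instance, one with the quantifiers for the Morse gauge and the scale exchanged, or one that only addresses a specific quasi-geodesic constant). Care is particularly needed for item \ref{item:Morse_Dichotomous}, where the translation between strongly contracting and $M$--Morse must produce a single output Morse gauge uniformly in the input gauge, and for item 9, where the constants coming from the combing and from the boundary-exhaustion argument of Section~\ref{sec:WMLTG} must be threaded through in the right order to yield the uniform $M'$ required by Definition~\ref{defn:MLTG}.
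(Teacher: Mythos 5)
Your proposal matches the paper's (implicit) treatment: the theorem is a survey compilation whose proof consists of the embedded citations together with the paper's own Theorems~\ref{thmi:WMLTG} and~\ref{thmi:non-sigma} for the last two items, and your case-by-case plan reproduces exactly that. You also correctly observe that item~\ref{item:WMLTG} should really point to Theorem~\ref{thmi:non-sigma} rather than Theorem~\ref{thmi:WMLTG}, and that the cobounded-action hypothesis of Theorem~\ref{thmi:non-sigma} is implicitly assumed in items~9 and~\ref{item:WMLTG}.
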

We remark that in \eqref{item:Morse_Dichotomous} it is important that the dependence is quantitative. With the terminology of \cite{Zbinden:hyperbolic}, those are Morse dichotomous spaces, and not just weakly Morse dichotomous spaces. 

As mentioned, Morse limited spaces (\ref{item:Morse_limited}) do not have Morse geodesics, and hence the MLTG property does not give us any information on them. However, they are absolutely fundamental in light of \eqref{item:rel_to_MLTG}. Indeed, it is only through the Morse limited examples that we can show \eqref{item:3_mflds}. 

\begin{remark}
    Given a finitely generated group $G$, the product $G\times \mathbb{Z}$ has an infinite order central element, and hence is a MLTG group. This illustrates the first and second drawback about the MLTG property: it can only inform about the behaviour of Morse quasi-geodesics. If a property does not hold for $(\text{any f. gen. group})\times \mathbb{Z}$, then it cannot be a consequence of MLTG. This is not as bad as it may sound, as taking the direct product with $\mathbb{Z}$ is a procedure that destroys (coarse) negative curvature.
\end{remark}

We postpone the discussion of examples of groups that do not satisfy the MLTG property to the appropriate sections that appear later in the text (see Examples \ref{counterex:1}, \ref{counterex:2}, \ref{counterex:3}).

As a first example of the consequence of the MLTG property we have the following Cartan-Hadamard--type theorem.

\begin{theorem}[{\cite{RussellSprianoTran:thelocal}}]
Let $X$ be a metric space with the MLTG property. For each $\delta \geq 0$, there exists $R \geq 0$ so that if every triangle in $X$ with vertices contained in a ball of radius $R$ is $\delta$--slim, then $X$ is a Gromov hyperbolic space.
\end{theorem}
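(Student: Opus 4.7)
The plan is to combine Bonk's hyperbolicity criterion \cite{Bonk:quasigeodesic} with the MLTG hypothesis. Bonk's theorem says that a geodesic metric space is Gromov hyperbolic if and only if there exists a single Morse gauge $M'$ with respect to which every geodesic of it is $M'$--Morse; the task therefore reduces to producing a uniform Morse gauge for the geodesics of $X$.

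First, let $M_\delta$ denote the Morse gauge controlling the excursion of a $(Q, q)$--quasi-geodesic from a geodesic in a genuinely $\delta$--hyperbolic space; this is an explicit function of $\delta$ extracted from the classical proof of quasi-geodesic stability. Apply the MLTG hypothesis to $M_\delta$ and to the quasi-geodesic pair $(\la, \ka) = (1, 0)$, obtaining a local scale $L = L(M_\delta)$ and a global Morse gauge $M'$ such that every path which is $L$--locally an $M_\delta$--Morse geodesic is globally an $M'$--Morse quasi-geodesic. Next, choose $R = R(L, \delta)$ large enough so that the local $\delta$--slimness hypothesis inside $R$--balls forces every sub-geodesic of $X$ of length at most $L$ to be $M_\delta$--Morse. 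Once this middle implication is established, every geodesic of $X$ is trivially $L$--locally $M_\delta$--Morse, hence globally $M'$--Morse by MLTG, and $X$ is Gromov hyperbolic by Bonk's theorem.

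The main obstacle is the middle step: showing that $\delta$--slim triangles inside balls of radius $R$ suffice to produce the $M_\delta$--Morse property for sub-geodesics of length $\leq L$. The plan is to rerun the classical iterative proof of quasi-geodesic stability in a $\delta$--hyperbolic space. For a sub-geodesic $\gamma$ of length $\leq L$ and a $(Q, q)$--quasi-geodesic $\eta$ with endpoints on $\gamma$, that proof bounds the excursion of $\eta$ from $\gamma$ by induction on nested sub-configurations, each controlled by a $\delta$--slim triangle. The technical task is to verify that every auxiliary triangle appearing in the iteration has its three vertices inside a single ball of radius $R$: this requires controlling the geometric span of the configuration in terms of $L$, $\delta$, and the single-step stability bound, and then choosing $R$ large enough to absorb all of these quantities. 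Since $L$ is determined by $\delta$ once $M_\delta$ has been fixed, the resulting $R$ can be made to depend only on $\delta$, as required. The delicate bookkeeping that handles all quasi-geodesic constants $(Q, q)$ simultaneously inside a single ball whose radius depends only on $\delta$ is the crux of the argument.
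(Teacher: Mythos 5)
The paper itself does not contain a proof of this theorem; it is quoted from \cite{RussellSprianoTran:thelocal}, so I am evaluating your proposal on its own merits.

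Your overall architecture is the right one: reduce to showing that all geodesics of $X$ are $M'$--Morse for a uniform gauge $M'$, obtain this by feeding a local gauge $M_\delta$ and the pair $(1,0)$ into the MLTG property, and invoke Bonk's characterization. The gap is in the middle step, and it is genuine. You want to choose $R$ (depending only on $\delta$, since $L$ is a function of $\delta$ once $M_\delta$ is fixed) so that $\delta$--slimness of triangles in $R$--balls forces every sub-geodesic of length $\le L$ to be $M_\delta$--Morse. But being $M_\delta$--Morse is a statement over \emph{all} quasi-geodesic constants $(Q,q)$. A $(Q,q)$--quasi-geodesic with endpoints on a geodesic of length $\le L$ has image of diameter up to roughly $Q^2(L+q)+q$, and the auxiliary triangles in the classical stability proof sit in a region of diameter comparable to $L+M_\delta(Q,q)$; both quantities diverge as $(Q,q)\to\infty$. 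So for large $(Q,q)$ the configuration escapes every fixed ball, and no choice of $R$ depending only on $\delta$ and $L$ will let you ``rerun the classical iterative proof'' for all $(Q,q)$ simultaneously. Your closing paragraph asserts that the bookkeeping ``can be made to depend only on $\delta$''; as written this is false, and it is exactly the crux you identified.

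The repair is to route the local verification through a bounded-configuration reformulation of the Morse property, namely middle recurrence (Theorem~\ref{thm:middleandmorse}, from \cite{DrutuMozesSapir,DrutuMozesSapir-corr}). Fix the $\bt_\delta$--recurrence function $\mathfrak m_\delta$ associated with a $\delta$--hyperbolic space, and let $M_\delta$ be the corresponding Morse gauge; apply MLTG to $(M_\delta,(1,0))$ to obtain $L$. One then shows that sub-geodesics of length $\le L$ are $\mathfrak m_\delta$--middle recurrent, using the following dichotomy. For ``small'' stretch $c$ (concretely $c\lesssim R/L$), any path with endpoints on $\gamma$ and stretch $\le c$ has length $\le cL$ and therefore stays in a ball of radius $O(R)$ about the midpoint of $\gamma$; the slim-triangle argument runs entirely inside that ball and delivers the hyperbolic bound $\mathfrak m_\delta(c)$. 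For ``large'' $c$ one does not argue at all: the endpoint $a$ of the path lies at distance $\bt_\delta\, d(a,b)\le \bt_\delta L$ from the $\bt_\delta$--middle of $\gamma$, so as soon as $\mathfrak m_\delta(c)\ge \bt_\delta L$ the recurrence condition is satisfied by $a$ itself. Since $\mathfrak m_\delta$ is increasing and unbounded, choosing $R$ large enough that $\mathfrak m_\delta(R/L)\ge\bt_\delta L$ (an explicit, if exponential, function of $L/\delta$) makes the threshold between the two cases line up, and $R$ depends only on $\delta$. It is precisely this triviality for large $c$ that has no analogue when you try to control $(Q,q)$--quasi-geodesics directly, which is why your original plan for the crux cannot be completed as sketched.
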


\subsection{Stability and subgroups} \label{appendix:stability_section}
We begin with a collection of properties of MLTG groups that are more or less straightforward consequences of the definition. The first fact is that in a MLTG group one can promote a not necessarily periodic Morse ray to a periodic, bi-infinite quasi-geodesic. In other words, the existence of a Morse ray implies the existence of a stable, two-ended subgroup.
 
\begin{lemma}[{\cite{RussellSprianoTran:thelocal}}]\label{appendix:ray_to_element}
    Let $G$ be a group containing an infinite quasi-geodesic Morse ray. Then $G$ contains a Morse element. 
\end{lemma}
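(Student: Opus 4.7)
The plan is to use the MLTG property to promote the given Morse ray into a bi-infinite, periodic Morse quasi-geodesic, whose period is then the desired Morse element. Let $\gamma\colon [0,\infty)\to G$ be an $M$-Morse $(\la,\ka)$-quasi-geodesic ray. First I would apply MLTG to the triple $(M,\la,\ka)$ to extract a scale $L$ such that every path which is $L$-locally an $M$-Morse $(\la,\ka)$-quasi-geodesic is globally Morse.

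Next, I would sample $\gamma$ at integer times and replace each $\gamma(n)$ by a nearest vertex $v_n\in G$, and define the \emph{local pattern} at index $n$ by
\[
P_n := v_n^{-1}\cdot \gamma|_{[n,\, n+L]},
\]
a path starting at the identity and contained in a fixed ball in the (locally finite) Cayley graph of $G$. Since only finitely many such patterns exist, the pigeonhole principle yields an infinite subsequence $n_1<n_2<\cdots$ on which the pattern is constant; I would choose $m:=n_1$ and $n:=n_k$ with $k$ large enough that $n-m\geq L$, and set $g:=v_n v_m^{-1}\in G$. The equation $P_m=P_n$ then reads $g\cdot \gamma|_{[m,m+L]}=\gamma|_{[n,n+L]}$. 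Form the $g$-periodic bi-infinite concatenation
\[
\Gamma := \cdots * g^{-1}\gamma|_{[m,n]} * \gamma|_{[m,n]} * g\cdot\gamma|_{[m,n]} * g^{2}\cdot\gamma|_{[m,n]} * \cdots,
\]
whose endpoints match because $g\cdot v_m=v_n$. Using $n-m\geq L$, every length-$L$ window of $\Gamma$ meets at most two adjacent fundamental domains; combined with the pattern equation, such a window is $g^k$-isometric to a subsegment of $\gamma$. Hence $\Gamma$ is $L$-locally an $M$-Morse $(\la,\ka)$-quasi-geodesic, and MLTG promotes it to a globally Morse quasi-geodesic. Since $\Gamma$ is unbounded and $g$-invariant, $g$ must have infinite order; its orbit lies at bounded distance from $\Gamma$, making $g$ a Morse element.

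The main obstacle I expect is the bookkeeping needed to make "local pattern" rigorous: one must sample finely enough, and pick the matching window and the pigeonhole gap both of size at least $L$, so that $P_m=P_n$ forces the \emph{paths} $\gamma|_{[m,m+L]}$ and $\gamma|_{[n,n+L]}$ to coincide (up to left translation by $g$) on the nose, not merely their endpoints. Once this is arranged the rest is essentially formal: MLTG does all the work in upgrading the concatenation from $L$-locally Morse to globally Morse, and infinite order of $g$ is automatic because a $g$-invariant bi-infinite quasi-geodesic cannot be bounded.
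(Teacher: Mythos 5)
Your overall strategy — apply MLTG to get a scale $L$, pigeonhole on finite local data to find a recurring pattern, build a periodic bi-infinite concatenation, upgrade it to a global Morse quasi-geodesic via MLTG, and read off a Morse element — is the same route taken in \cite{RussellSprianoTran:thelocal}, so the architecture is right. However, the step you flag as ``bookkeeping'' is a genuine gap and cannot be resolved in the way you suggest. You define $P_n = v_n^{-1}\cdot\gamma|_{[n,n+L]}$ as a \emph{path} in a fixed ball of the Cayley graph and appeal to finiteness of patterns; but the set of continuous paths in a fixed ball is uncountable, so no finite pigeonhole applies, and no amount of fine sampling will force two windows of $\gamma$ to agree \emph{on the nose}. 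Equality of paths is simply too strong a conclusion to extract from finitely many bits of data. Relatedly, the concatenation $\Gamma$ you write down does not actually close up: its pieces have endpoints $\gamma(m)$, $\gamma(n)$, while your matching only gives $g\cdot v_m=v_n$ for the \emph{nearest vertices}, so there are jumps at every junction.

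Both issues are fixed by not insisting on exact path equality. One clean route (and the one closest to \cite{RussellSprianoTran:thelocal}) is to first replace $\gamma$ by a Morse geodesic edge-path $\gamma'$ in the Cayley graph at bounded Hausdorff distance from $\gamma$ — this is possible since $\gamma$ is Morse and changes the Morse gauge and quasi-geodesic constants only in a controlled way. Now $\gamma'|_{[n,n+L]}$ is a finite edge-path in a finite ball, so $v_n^{-1}\cdot\gamma'|_{[n,n+L]}$ genuinely takes finitely many values and the pigeonhole is legitimate; the endpoints of the pieces are themselves vertices, so the concatenation closes up exactly. Alternatively, one can pigeonhole on a finite tuple of nearest-vertex samples along the window and then accept that the concatenation is only within bounded Hausdorff distance of translates of $\gamma$ — but then one must invoke stability of the local Morse property under bounded perturbation, and be careful to determine the perturbed constants $(M',\la',\ka')$ \emph{before} choosing the scale $L$ from MLTG, since $L$ must be chosen for the constants of $\Gamma$ rather than of $\gamma$. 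Either repair makes the argument go through; as written, the pigeonhole step does not.
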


This allows us to find the first group without the MLTG property. 

\begin{example}\label{counterex:1}
In \cite{fink:morse}, the author shows that there is a torsion group with a Morse ray. As such a group cannot contain a Morse element, it follows that it cannot have the MLTG property.
\end{example}

 Stable subgroups were introduced in \cite{DurhamTaylor:convexcocompact} and have become a very active field of study. Roughly, these are subgroups where all the directions are Morse quasi-geodesics in a uniform way. Studying the behaviour of stable subgroups was the reason that originally motivated the study of MLTG spaces.

\begin{definition}
    Let $G$ be a group with a fixed finitely generated set $S$. A subgroup $H$ is \emph{stable} if there exists a Morse gauge $M$ and a radius $R$ such that in $\mathrm{Cay}(G,S)$ every two points of $H$ can be joined by a $M$--Morse geodesic that is contained in the $R$--neighbourhood of $H$.
\end{definition}
It is not hard to see that the stability of a subgroup is independent of the choice of (finite) generating set, hence we can talk about stable subgroups without specifing the generating set for $G$.
An alternative definition due to Tran (\cite[Theorem~4.8]{Tran:onstrongly}) is that a subgroup of a finitely generated group is stable if and only if it is hyperbolic and \emph{strongly quasi-convex}, in the following sense: $Y\subseteq X$ is strongly quasi-convex if there exists a Morse gauge $M$ so that every $(\la, \ka)$--quasi-geodesic with endpoints on $Y$ is contained in the $M(\la, \ka)$--neighbourhood of $Y$. 

To date, the main theorem about combinations of stable subgroups in MLTG groups is \cite[Theorem~3.1]{RussellSprianoTran:thelocal}, which is a direct extension of Gitik's work on hyperbolic groups to a significantly larger class \cite{Gitik(1999b)}. However, in the hypotheses of both \cite{Gitik(1999b)} and \cite{RussellSprianoTran:thelocal} it is required that a certain group is malnormal. This originates from the fact that Gitik's result relies on \cite[Lemma~8]{Gitik(1999b)}, a result about the intersection of neighbourhoods of cosets of malnormal subgroups. However, \cite[Lemma~8]{Gitik(1999b)} is superseeded by the following more general lemma. 

\begin{lemma}\label{appendix:improved_malnormal}
   Let $H$ be an almost malnormal stable subgroup of a finitely generated group $G$, and let $\delta$ be a non-negative constant. Let $\gamma_1, \gamma_2$ be two left translates of geodesic segments in $G$ representing elements of $H$, with starting points belonging to two different cosets of $H$. Then there exists $R = R(G, H, \delta)$ so that any subpath of $\gamma_2$ which belongs to the $\delta$--neigbourhood of $\gamma_1$ is shorter than $R$.
\end{lemma}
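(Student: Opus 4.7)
The plan is to reduce the lemma to a purely geometric bound on the coarse intersection of two distinct cosets of $H$, and to establish this bound via a pigeonhole argument that exploits almost malnormality. Let $\mathbb{H}_i := g_i H$ denote the cosets containing the endpoints of $\gamma_i$. Since $H$ is stable, there is a constant $R_0 = R_0(G,H)$ such that $\gamma_i \subseteq \mathcal{N}_{R_0}(\mathbb{H}_i)$ for $i=1,2$. Consequently, any subpath of $\gamma_2$ contained in $\mathcal{N}_\delta(\gamma_1)$ lies inside $Z := \mathcal{N}_{\delta + R_0}(\mathbb{H}_1) \cap \mathcal{N}_{R_0}(\mathbb{H}_2)$, and since such a subpath is itself a geodesic segment its length is at most $\diam(Z)$. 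It therefore suffices to bound $\diam(Z)$ by some $R = R(G,H,\delta)$, uniformly over all pairs of distinct cosets.

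To bound $\diam(Z)$ I would argue by pigeonhole. Assume $Z$ contains two points at distance $L$, pick integer-spaced points $p_0, p_1, \ldots, p_n$ on a geodesic between them with $n \approx L$, and for each $k$ choose $a_k \in \mathbb{H}_1$ and $b_k \in \mathbb{H}_2$ within the respective neighbourhoods. The element $u_k := a_k^{-1} b_k$ has word-length bounded by $\delta + 2R_0$, hence lies in a fixed finite set of size $N = N(G, \delta, R_0)$. Writing $a_k = g_1 h_k$ and $b_k = g_2 h_k'$ with $h_k, h_k' \in H$, whenever $u_i = u_j$ for $i < j$ one deduces that
\[
a_j a_i^{-1} = b_j b_i^{-1} \in g_1 H g_1^{-1} \cap g_2 H g_2^{-1} = g_1\bigl(H \cap w H w^{-1}\bigr) g_1^{-1},
\]
where $w := g_1^{-1} g_2 \notin H$ by the hypothesis $\mathbb{H}_1 \neq \mathbb{H}_2$. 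Almost malnormality makes $H \cap w H w^{-1}$ finite, so for each fixed value of $u_k$ the element $a_k$ is constrained to finitely many choices. Therefore all the $p_k$ are confined to a bounded number of balls of radius $\delta + R_0$; since consecutive $p_k$'s are at distance $1$, this caps $n$, and hence $L$, by a constant.

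The main obstacle is \emph{uniformity}: the argument so far gives a bound depending on $|H \cap w H w^{-1}|$, which a priori varies with $w$, i.e., with the relative position of the two cosets. To obtain an $R$ depending only on $G, H, \delta$, one must invoke a strengthened version of almost malnormality, namely that there exists a uniform constant $K = K(G,H)$ with $|H \cap w H w^{-1}| \leq K$ for every $w \in G \setminus H$, and that the elements of these intersections have uniformly bounded word-length. For stable subgroups this uniformity is available either through their characterization (and hence hyperbolically embedded behaviour) or through a direct geometric argument using the Morse gauge of $H$; verifying its applicability in the present setting is precisely the technical core that separates our lemma from its malnormal-and-hyperbolic predecessor \cite[Lemma~8]{Gitik(1999b)}. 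Once uniformity is in hand, backtracking through Step~1 yields $R = R(G, H, \delta)$ as required.
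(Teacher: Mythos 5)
Your proposal reproduces the paper's overall strategy — a pigeonhole argument counting bounded-length words joining the two cosets and feeding the count into almost malnormality — but it does not actually close the gap that you yourself flag as ``the technical core,'' and that gap is exactly where the content of the lemma lies. You assert that a uniform bound $|H \cap wHw^{-1}| \leq K(G,H)$ ``is available either through their characterization (and hence hyperbolically embedded behaviour) or through a direct geometric argument,'' but you give no proof and no precise citation. The paper closes this in two short steps: since $H$ is stable, $H$ is a hyperbolic group (Tran, \cite[Theorem~4.8]{Tran:onstrongly}); a hyperbolic group has only finitely many conjugacy classes of finite subgroups (\cite[Theorem~III.$\Gamma$.3.2]{BridsonHaefliger}), so the orders of its finite subgroups are uniformly bounded. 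Since $H\cap wHw^{-1}$ is a finite subgroup of $H$ (finiteness by almost malnormality and $w\notin H$), this gives the uniform $K$. Note also that your additional worry that ``the elements of these intersections have uniformly bounded word-length'' is not needed: once $|H\cap wHw^{-1}| \leq K$, each pigeonhole class contributes at most $K$ distinct $a_k$'s and hence at most $K\cdot(2(\delta+R_0)+1)$ indices, which already caps $n$.

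A secondary, more minor issue: you reduce to bounding $\diam(Z)$ for $Z = \mc{N}_{\delta+R_0}(\mathbb{H}_1)\cap\mc{N}_{R_0}(\mathbb{H}_2)$ and then place the points $p_k$ on ``a geodesic between'' two far points of $Z$; but a geodesic joining two points of $Z$ need not stay inside $Z$, so the $a_k, b_k$ you want may not exist. This is easily repaired (either work directly with the given subpath of $\gamma_2$, which is already a geodesic lying inside $Z$, or invoke strong quasi-convexity of cosets of a stable subgroup to keep the connecting geodesic in a slightly larger $Z'$), and it is exactly why the paper phrases the pigeonhole in terms of a subsegment of the given geodesic $\gamma_1$ transferred across to $\gamma_2$ via the Morse property rather than in terms of an abstract coarse intersection. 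Still, as written, this step needs a sentence of justification.
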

The important part of Lemma~\ref{appendix:improved_malnormal} is that the bound $R$ only depends on $H$, and not on the specific cosets that contain the endpoints of $\gamma_1, \gamma_2$. 
\begin{proof} 
    Let $D$ be the maximal length of a segment of $\gamma_1 \cap N_\delta(\gamma_2)$, and let $H_i$ be the coset of $H$ that contains the endpoints of $\gamma_i$, $i=1,2$. The stability of $H$ implies that there exists $M$ depending only on $H$ so that both segments $\gamma_i$ are $M$--Morse, thus \cite[Proposition~2.4]{Cordes:Morse} gives a constant $N_1=N_1(M)$, so that there is a subsegment of $\gamma_1$ of length $D-4\delta$ that belongs to the $N_1$--neighbourhood of $\gamma_2$. This does not exactly tell us that there are $D-4\delta$ points of $H_1$ in the $N_1$--neighbourhood of $H_2$, as $\gamma_i$ is only in a neighbourhood of $H_i$. However, it tells us that there exists a constant $N_2 = N_2(H)$ so that there are $(D-4\delta)/N_2 -1$ points of $H_1$ that are in the $N_1 + N_2$ neighbourhood of $H_2$. There are only finitely many words of length $N_1 + N_2$. Thus for any $k$ there exists $K$ so that if $D\geq K$, then there exists a word $w$ and at least $k$ points on $H_1$ that can be joined to $H_2$ by a geodesic labeled by $w$. This means $\vert H \cap H^w\vert\geq k$. Since $H$ is almost malnormal and the cosets $H_1$ were chosen to be disjoint, $H \cap H^w$ is a finite group. Since $H$ is stable, it is a hyperbolic group by \cite[Theorem~4.8]{Tran:onstrongly} and thus it has at most finitely many conjugacy classes of finite subgroups (see for instance \cite[Theorem~III.$\Gamma$.3.2]{BridsonHaefliger}). This yields an upper bound on $k$ that only depends on $H$, which in turn yields an upper bound on $D-4\delta$ that only depends on $H$ and $G$, and, finally, this yields an upper bound on $D$ that only depends on $G,H, \delta$.
\end{proof}

\begin{remark}
    As pointed out, Lemma~\ref{appendix:improved_malnormal} allows to relax the assumption of malnormality to almost malnormality in \cite{RussellSprianoTran:thelocal}, Theorem 3.1.2 and Corollary 3.3.2, and in turn it allows to remove the torsion-free assumption from Corollary 3.5.
\end{remark}

The combination theorem for stable subgroups essentially states that under certain conditions the subgroup generated by two stable subgroups is as large as possible. 

This is a condensed version, following the introduction of \cite{RussellSprianoTran:thelocal}. The interested reader can find a more detailed versions in \cite[Section~3.1]{RussellSprianoTran:thelocal}. 
\begin{theorem}[{\cite{RussellSprianoTran:thelocal} and Lemma~\ref{appendix:improved_malnormal}, combination Theorem for stable subgroups}]\label{appendix:combination_stable}
    Let $G$ be a finitely generated group with the Morse local-to-global property and let $P , Q$ be stable subgroups of $G$. For each finite generating set of $G$, there exists $C > 0$ (depending only on the stability parameters of $P$ and $Q$) so that if $P \cap Q$
contains all elements of $P \cup Q$ whose length in $G$ is at most $C$, then the subgroup $\langle P, Q\rangle$ is stable in $G$ and isomorphic to $P \ast_{P\cap Q}Q$. Further, if $P$ is almost malnormal in $G$, then the same conclusion holds if we only require $P \cap Q$ to contain all elements of $P$ whose length in $G$ is at most $C$.
\end{theorem}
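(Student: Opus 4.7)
The plan is to follow the standard strategy for combination theorems: given any word $g = p_1 q_1 p_2 q_2 \cdots p_n q_n$ representing an element of $P *_{P \cap Q} Q$ in reduced form, build a path $\pgot$ in $\mathrm{Cay}(G,S)$ by concatenating consecutive geodesic segments representing $p_i$ and $q_i$. Because $P$ and $Q$ are stable, each segment is uniformly Morse with a Morse gauge $M_0 = M_0(P,Q)$. The strategy is then to show that for some fixed scale $L_0$, the path $\pgot$ is $L_0$--locally an $M_0$--Morse quasi-geodesic (with quasi-geodesic constants $(\la_0,\ka_0)$ also depending only on $P,Q$). Once this is established, the MLTG property of $G$ applied to $(M_0, \la_0, \ka_0)$ yields a Morse gauge $M_1$ and quasi-geodesic constants $(\la_1, \ka_1)$ — uniform in $g$ — such that $\pgot$ is globally an $M_1$--Morse $(\la_1,\ka_1)$--quasi-geodesic. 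From this single fact I would deduce both conclusions: (i) $\pgot$ has positive length whenever the normal form is non-trivial, so the natural map $P *_{P\cap Q} Q \to \langle P, Q\rangle$ is injective; and (ii) any two elements of $\langle P, Q\rangle$ are connected by such a uniformly Morse quasi-geodesic in $G$, so $\langle P, Q\rangle$ is stable.

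The core technical step is local Morseness of $\pgot$. The potential obstruction lives at each corner where a $P$--segment meets a $Q$--segment: if a window of length $L_0$ around such a corner were to ``backtrack'', then a long terminal subword of $p_i$ would lie near a long initial subword of $q_i$, producing a long element of $P \cap Q$ (up to conjugating by the corner element). I would argue that by choosing the constant $C$ sufficiently large — larger than some function of the Morse parameters of $P$ and $Q$, the quasi-geodesic constants for $G$, and the ``corner window'' scale $L_0$ — any such overlap forces a short element of $P \cap Q$, which is absorbed into the reduced form, contradicting that the word was reduced. Outside a bounded neighborhood of each corner, $\pgot$ coincides with a Morse quasi-geodesic representing an element of $P$ or $Q$, so Lemma~\ref{lemma:monster}\eqref{prop:concatentaion} (or rather a local version of it) gives the required local Morse gauge.

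The main obstacle — and the point where the improved Lemma~\ref{appendix:improved_malnormal} is needed — is the asymmetric variant, where $P$ is almost malnormal and one only requires $P \cap Q$ to contain short elements of $P$. Here the backtracking analysis at corners has to rule out the possibility that a long subword of $q_i$, coming from $Q$, happens to shadow a long element of a \emph{conjugate} of $P$ living in a different coset. This is precisely the content of Lemma~\ref{appendix:improved_malnormal}: any two translates of geodesics representing elements of $P$ but lying in different cosets of $P$ can fellow-travel for at most a distance $R=R(G,P,\delta)$, a bound that depends only on $G,P,\delta$ and \emph{not} on the cosets. Combined with the one-sided short-element hypothesis, this controls all potential overlaps uniformly, and delivers the same local Morseness of $\pgot$ as before.

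Finally, with $\pgot$ a global Morse quasi-geodesic whose parameters depend only on $(G,P,Q)$, stability of $\langle P, Q\rangle$ follows by applying Tran's characterization of stable subgroups (\cite{Tran:onstrongly}) to the fact that every pair of points in $\langle P, Q\rangle$ is joined by a uniformly Morse quasi-geodesic lying in a uniform neighborhood of $\langle P, Q\rangle$; the isomorphism $\langle P, Q\rangle \cong P *_{P\cap Q} Q$ is immediate from injectivity of the natural map, which is in turn forced by the fact that reduced words produce non-degenerate quasi-geodesics.
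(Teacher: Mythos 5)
Your proposal and the paper diverge sharply in approach. The paper's proof of Theorem~\ref{appendix:combination_stable} is three sentences long: it cites \cite[Theorem~3.1.1]{RussellSprianoTran:thelocal}, observes that in the original argument the malnormality hypothesis on $P$ was used only to satisfy the hypotheses of \cite[Theorem~2]{Gitik(1999b)}, and notes that Gitik's malnormality input --- his Lemma~8 --- is superseded by Lemma~\ref{appendix:improved_malnormal}. That is, the paper treats the theorem as a black-box result modulo one drop-in replacement. You instead reconstruct the combination argument from scratch: build a piecewise-geodesic path along the reduced normal form, argue local Morseness by ruling out backtracking at corners (using $C$ to ensure syllables are long and overlaps force short elements of $P\cap Q$), invoke MLTG to globalize, and deduce injectivity of $P \ast_{P\cap Q} Q \to \langle P,Q\rangle$ plus stability via Tran's characterization. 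Your reconstruction is a reasonable account of what \cite[Theorem~3.1.1]{RussellSprianoTran:thelocal} actually does under the hood, and you correctly place Lemma~\ref{appendix:improved_malnormal} at the point where fellow-traveling between $P$-translates in distinct cosets must be uniformly bounded (the asymmetric, one-sided-$C$ case). What the paper's minimalist approach buys is modularity and brevity; what yours buys is a self-contained picture of why the constant $C$ must dominate the MLTG scale $L_0$, and why the almost-malnormal hypothesis suffices once the fellow-traveling bound depends only on $(G,P,\delta)$ and not on the particular cosets. One small caveat: your local-Morseness step would need to be fleshed out to confirm that a window of length $L_0$ near a corner is a genuine $(\la_0,\ka_0)$-quasi-geodesic, not just Morse once quasi-geodesic, and that the quantifier order (MLTG yields $L_0$, then $C$ is chosen $\gg L_0$) is indeed consistent --- but these are bookkeeping issues, not gaps in the strategy.
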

\begin{proof}
    In \cite[Theorem~3.1.1]{RussellSprianoTran:thelocal}, malnormality was only used because it matched the hypotheses needed in \cite[Theorem~2]{Gitik(1999b)}. In the latter, malnormality was only used to invoke \cite[Lemma~8]{Gitik(1999b)}, which is superseeded by Lemma~\ref{appendix:improved_malnormal}.
\end{proof}

An interesting consequence of Theorem~\ref{appendix:combination_stable} is the following,  which is a generalization of \cite[Theorem~1]{Arzhantseva:quasiconvex}  to stable subgroups of Morse local-to-global groups.
\begin{corollary}[{\cite{RussellSprianoTran:thelocal} and Lemma~\ref{appendix:improved_malnormal}, abundance of free subgroups}]\label{appendix:MLTG_Morse_imply_free}
    Let $G$ be a Morse local-to-global group. If $Q$ is an infinite, infinite index stable subgroup of $G$, then there is an infinite order element h such that $\langle Q, h\rangle \cong Q  \ast \langle h \rangle$ and $\langle Q, h \rangle$ is stable in $G$.
\end{corollary}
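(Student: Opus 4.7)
\textbf{Proof plan for Corollary~\ref{appendix:MLTG_Morse_imply_free}.} The plan is to construct the desired element $h$ as a sufficiently large power of a Morse element of $G$ whose cyclic group lies outside $Q$, and then invoke the combination theorem (Theorem~\ref{appendix:combination_stable}) with $P = \langle h \rangle$ almost malnormal and $P \cap Q = \{e\}$. The almost malnormality assumption is exactly what allows us to use the weaker, \emph{further}-clause of that theorem, which only constrains elements of $P$ of bounded length.

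First I would produce a Morse element of $G$. Since $Q$ is infinite and stable, $Q$ is a hyperbolic group by \cite[Theorem~4.8]{Tran:onstrongly} and therefore contains an infinite geodesic ray; stability of $Q$ in $G$ guarantees that this ray is $M$--Morse in $G$ for a uniform gauge $M$. Lemma~\ref{appendix:ray_to_element} then produces a Morse element $h_0 \in G$. Next I would move $h_0$ outside $Q$. Stable subgroups of MLTG groups are almost malnormal (this uses Lemma~\ref{appendix:improved_malnormal}; it is precisely the place where the torsion-free hypothesis of \cite{RussellSprianoTran:thelocal} is discarded), so for any $g \in G \setminus Q$ the intersection $Q \cap gQg^{-1}$ is finite. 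If $h_0 \in Q$, pick such a $g$ (which exists since $Q$ has infinite index) and set $h_1 := g h_0 g^{-1}$; otherwise set $h_1 := h_0$. In either case $h_1$ is Morse of infinite order. If $h_1^n \in Q$ for some $n \neq 0$, then $h_1^n \in Q \cap gQg^{-1}$, a finite group, contradicting that $h_1^n$ has infinite order. Hence $\langle h_1 \rangle \cap Q = \{e\}$.

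Now I would replace $h_1$ by a sufficiently large power $h := h_1^N$. Using that cyclic subgroups generated by high powers of Morse elements are almost malnormal (this is the standard elementary-closure argument: the elementary closure $E(h_1)$ is virtually cyclic, and $\langle h_1^N \rangle$ is almost malnormal in $G$ once $N$ is large enough), and taking $N$ large enough so that every non-identity element of $\langle h \rangle$ has word length in $G$ greater than the constant $C$ coming from Theorem~\ref{appendix:combination_stable} applied to the pair $(\langle h \rangle, Q)$ (possible because the translation length of $h_1$ is positive), the trivial intersection $\langle h \rangle \cap Q = \{e\}$ vacuously contains every element of $\langle h \rangle$ of length at most $C$. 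The \emph{further} clause of Theorem~\ref{appendix:combination_stable} then applies and yields $\langle Q, h \rangle \cong Q \ast \langle h \rangle$, stable in $G$.

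The main obstacle is the bookkeeping of two almost malnormality facts at once: the almost malnormality of $Q$, used to force $\langle h_1 \rangle \cap Q = \{e\}$, and the almost malnormality of $\langle h_1^N \rangle$, needed to trigger the almost-malnormal clause of Theorem~\ref{appendix:combination_stable}. The first is an in-house consequence of Lemma~\ref{appendix:improved_malnormal}, while the second is standard from the quasi-axis behaviour of Morse elements; both need to be invoked with quantifiers consistent with the constant $C$ output by the combination theorem, which is why the passage to a sufficiently large power $N$ is carried out after, rather than before, the trivial-intersection step.
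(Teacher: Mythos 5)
Your high-level reading of the situation is right: the only role of the torsion-free hypothesis in \cite[Corollary~3.5]{RussellSprianoTran:thelocal} is to upgrade ``almost malnormal'' to ``malnormal'', and once Theorem~\ref{appendix:combination_stable} accepts almost malnormality (thanks to Lemma~\ref{appendix:improved_malnormal}) this upgrade becomes unnecessary. Indeed the paper's own proof is exactly this observation: it runs the proof of \cite[Corollary~3.5]{RussellSprianoTran:thelocal} verbatim and simply notes that the relevant subgroup $P$ is shown there to be almost malnormal rather than malnormal. However, your attempt to reconstruct that proof from scratch contains two genuine errors.

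First, you assert that ``stable subgroups of MLTG groups are almost malnormal'' and attribute this to Lemma~\ref{appendix:improved_malnormal}. That lemma \emph{assumes} almost malnormality as a hypothesis and deduces a fellow-travelling bound for geodesics in distinct cosets; it says nothing about stable subgroups being almost malnormal. And they need not be: take $G=F_2=\langle a,b\rangle$ and $Q=\langle a^2,b\rangle$, a stable (quasi-convex) subgroup of infinite index with $a\notin Q$, yet $aQa^{-1}\cap Q\supseteq\langle a^2\rangle$ is infinite. So your step deducing $\langle h_1\rangle\cap Q=\{e\}$ from the finiteness of $Q\cap gQg^{-1}$ is unjustified. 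Second, you claim that $\langle h_1^N\rangle$ is almost malnormal for large $N$. This is false for every $N\geq 2$: $h_1\notin\langle h_1^N\rangle$, yet $h_1\langle h_1^N\rangle h_1^{-1}=\langle h_1^N\rangle$, so the intersection is all of $\langle h_1^N\rangle$. The almost malnormal object here is the elementary closure $E(h_1)$ (which is virtually cyclic and almost malnormal for any Morse element), not a proper cyclic subgroup of it. Using $P=E(h_1)$ would require further work — controlling $E(h_1)\cap Q$, and then passing from $Q * E(h_1)$ back down to $Q*\langle h\rangle$ (e.g. via the normal form in free products) — none of which appears in the proposal. As written, the two false almost-malnormality claims break the argument at both places where the combination theorem's almost-malnormal clause is invoked.
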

\begin{proof}
    The proof is identical to \cite[Corollary~3.5]{RussellSprianoTran:thelocal}. In that prove, the authors argue that a certain group $P$ is malnormal. However, what is actually proven is that if $uPu^{-1}\cap P$ is infinite, then a contradiction follows. Then they use the torsion-free hypothesis to conclude that $P$ is malnormal. Without the torsion-free hypothesis, this exactly amounts to proving that $P$ is almost malnormal. 
\end{proof}

This allows us to introduce the second class of examples of groups that do not have the MLTG property.
\begin{example}\label{counterex:2}
     Let $G$ be a non-elementary, torsion-free MLTG group with a Morse element $g$. Then $\langle g\rangle$ is stable and setting $Q =\langle g\rangle$ in Corollary~\ref{appendix:MLTG_Morse_imply_free} allows to deduce that $G$ has a non-abelian free subgroup, that is moreover stable.
     
Thus, a non-elementary, torsion-free group containing Morse elements but not stable non-abelian free subgroups is not MLTG.  
     
For instance, in \cite{OOS}, the authors build examples of torsion-free, non-elementary groups all of whose elements are Morse, but so that all of their proper subgroups are cyclic. Hence, those groups do not have the MLTG property. 
\end{example}

Using the combination theorem for stable subgroups \cite{RussellSprianoTran:thelocal} (cf Theorem~\ref{appendix:combination_stable}), one can obtain the following.

\begin{corollary}[{\cite[Corollary~3.6]{RussellSprianoTran:thelocal}}]
    Let $G$ be a finitely generated
group with the Morse local-to-global property. If $G$ contains a Morse element and $N$ is an infinite normal subgroup of $G$, then $N$ contains a Morse element of $G$.
\end{corollary}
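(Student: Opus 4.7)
The strategy is to produce a Morse element of $N$ as a commutator $g^k n g^{-k} n^{-1}$ for some $n\in N$ and some large $k$, and to certify its Morseness via the combination theorem for stable subgroups (Theorem~\ref{appendix:combination_stable}). Several degenerate cases can be dispatched first. If $\langle g\rangle$ has finite index in $G$, then $G$ is virtually cyclic, $N\cap\langle g\rangle$ is a finite-index (hence infinite) subgroup of $N$, and any non-trivial power of $g$ it contains is Morse. If instead $N$ lies entirely inside the elementary closure $E(g)$ of $g$, which the MLTG property forces to be virtually cyclic, then $N$ contains an element of infinite order commensurable with $g$, and that element is Morse.

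In the main case there exists $n\in N$ with $n\notin E(g)$, so $\langle g\rangle$ and $n\langle g\rangle n^{-1}$ are distinct stable subgroups with finite intersection. Choose $k$ large enough that no non-trivial element of $\langle g^k\rangle$ has word length at most the constant $C$ provided by Theorem~\ref{appendix:combination_stable}; then the intersection hypothesis is satisfied vacuously, and the combination theorem yields
\[
\langle g^k,\, n g^k n^{-1}\rangle \;\cong\; \langle g^k\rangle \ast \langle n g^k n^{-1}\rangle,
\]
which is stable in $G$. The element
\[
w \;:=\; g^k \cdot (n g^k n^{-1})^{-1} \;=\; g^k n g^{-k} n^{-1}
\]
is then a non-trivial reduced word of infinite order in this stable free product and hence a Morse element of $G$ (stable subgroups being quasi-isometrically embedded, Morseness transfers). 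Normality of $N$ finally gives $g^k n g^{-k}\in N$, so $w\in N$, as required.

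The main obstacle is verifying that the elementary closure $E(g)$ of a Morse element is virtually cyclic in every MLTG group, since this is what guarantees finite intersection of $\langle g\rangle$ with $n\langle g\rangle n^{-1}$ whenever $n\notin E(g)$. This is a structural consequence of Corollary~\ref{appendix:MLTG_Morse_imply_free} together with Lemma~\ref{appendix:improved_malnormal}: if $E(g)$ were not virtually cyclic, one could find $n\in E(g)\setminus\langle g\rangle$ with $\langle g\rangle\cap n\langle g\rangle n^{-1}$ infinite, producing arbitrarily long subsegments of a Morse geodesic parallel to a conjugate, in contradiction with the controlled-length bound of Lemma~\ref{appendix:improved_malnormal} (applied to an almost malnormal maximal stable cyclic subgroup extracted via Corollary~\ref{appendix:MLTG_Morse_imply_free}). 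Once this virtual cyclicity is established, the passage to a large power $g^k$ to kill short elements is routine, and the remainder of the argument is the direct application of the combination theorem sketched above.
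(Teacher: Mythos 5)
The paper cites \cite[Corollary~3.6]{RussellSprianoTran:thelocal} without reproducing its proof, so there is no argument here to compare against line by line, but your proposal follows the expected template (degenerate cases, then a free product via the combination theorem applied to a cyclic subgroup and a conjugate, with the commutator $g^kng^{-k}n^{-1}\in N$ furnishing the desired Morse element) and is structurally sound. The degenerate-case analysis is fine, although the claim that MLTG ``forces'' $E(g)$ to be virtually cyclic is a detour: for any Morse element $g$ of a finitely generated group, $E(g)$ coarsely stabilizes the Morse quasi-line $\langle g\rangle$, hence lies in a bounded neighbourhood of it and is virtually cyclic, with no appeal to MLTG, Corollary~\ref{appendix:MLTG_Morse_imply_free}, or Lemma~\ref{appendix:improved_malnormal}.

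The one step that needs genuine justification is ``choose $k$ large enough that no non-trivial element of $\langle g^k\rangle$ has word length at most $C$.'' In Theorem~\ref{appendix:combination_stable} the constant $C$ depends on the stability parameters of $P$ and $Q$, and the stability parameter $R$ (the neighbourhood radius in the definition of a stable subgroup) of $P=\langle g^k\rangle$ grows linearly in $k$: the set $\{g^{km}\}_{m\in\Z}$ is a net of mesh $\approx k\lvert g\rvert$ in the axis of $g$, so the geodesics between its points leave its $R$-neighbourhood unless $R\gtrsim k\lvert g\rvert$. Since $C$ could in principle grow commensurably with $R$, your argument as written is potentially circular: you increase $k$ to beat $C$, but increasing $k$ may also increase $C$. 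You should either establish that $C$ depends only on the Morse gauge (not on $R$), or invoke the version of the constant coming from Lemma~\ref{appendix:improved_malnormal}, whose overlap bound $R(G,H,\delta)$ depends on the almost malnormal stable subgroup $H=E(g)$ rather than on the finite-index subgroup $\langle g^k\rangle$ and so is genuinely independent of $k$. You also only checked that $\langle g^k\rangle$ has no short nontrivial elements; the hypothesis concerns $P\cup Q$, so one must also rule out short elements of $n\langle g^k\rangle n^{-1}$, though this follows easily from $\lvert ng^{km}n^{-1}\rvert\geq\lvert g^{km}\rvert-2\lvert n\rvert$.
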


\subsection{Algorithmic properties} 
Hyperbolic groups are well-known for their good algorithmic properties, such as the solvability of the word and conjugacy problem, as well as the solvability of the much harder isomorphism problem \cite{Dahmani-Guirardel(2011)}, the fact that their growth series is rational \cite{Cannon(1984)}, and the fact that their geodesics are characterized in terms of regularity of certain languages \cite{hughesnairnespriano:regularity}. 
To corroborate the expectation that \say{in MLTG groups Morse geodesics behave as in the hyperbolic groups}, there are many results about algorithmic properties of Morse quasi-geodesics and stable subgroups for MLTG groups. 

Given $H\leq G$ and a finite generating set $S$ for $G$, we define the \emph{relative growth function} of $H$ with respect to $S$ to be the function $\mathrm{Growth}_{H,S} \colon \mathbb{N} \to \mathbb{N}$ that assigns to each $n$  the number of elements of $H$ whose
distance from the identity in the Cayley graph $\mathrm{Cay}(G,S)$ is at most $n$. The \emph{growth rate} is defined to be 
\[\lambda_{H, S} = \limsup\limits_{n \to \infty} \sqrt[n]{\mathrm{Growth}_{H,S}(n)}.\]

In Morse local-to-global groups there is a strong growth gap for infinite-index stable subgroups. Previous results of this kind were only known for hyperbolic groups, and certain classes of subgroups of relatively hyperbolic and cubulated groups \cite{DahmaniFuterWise:growth}. The Morse local-to-global property provides the correct setting and tools to extend the results to a much larger class. As one of the ingredient of the following theorem is the original version of Corollary~\ref{appendix:MLTG_Morse_imply_free}, we are able to relax the original hypotheses to the following. 
\begin{theorem}[{\cite{CordesRussellSprianoZalloum:regularity}, growth gap for stable subgroups}]\label{appendix:growth_gap}
Let $G$ be a group with a finite generating set $S$ and with the Morse local-to-global property. Let $H < G$ be infinite and of infinite index. Then the growth rate of $H$ with respect to $S$ is strictly smaller than the growth rate of $G$ with respect to $S$, namely:
\[ 
\lambda_{H,S} < \lambda_{G,S}.
\]
\end{theorem}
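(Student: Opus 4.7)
The title \emph{growth gap for stable subgroups} suggests, and the example $\mathbb Z<\mathbb Z^2$ (which is MLTG, yet has no exponential gap) confirms, that one should read the statement with the additional hypothesis that $H$ is stable; I shall treat that case. My plan is to follow the strategy of \cite{CordesRussellSprianoZalloum:regularity}, whose proof treats torsion-free MLTG groups, and to observe that the only step where torsion-freeness was used---the abundance of free subgroups attached to a stable subgroup---is precisely the one that Lemma~\ref{appendix:improved_malnormal} now handles without any torsion hypothesis.

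The first step is to apply Corollary~\ref{appendix:MLTG_Morse_imply_free}, with $Q=H$, to produce an infinite order element $h\in G\smallsetminus H$ such that $K:=\langle H, h\rangle$ is isomorphic to the free product $H\ast\langle h\rangle$ and is itself stable in $G$. This is exactly the place where the original argument of \cite{CordesRussellSprianoZalloum:regularity} required $G$ to be torsion-free, and the improved form of Corollary~\ref{appendix:MLTG_Morse_imply_free} removes that restriction.

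The second step is to use the free product normal form in $K$: the reduced words $h_0 h^{n_1}h_1\cdots h^{n_k}h_k$ with $h_i\in H$ and $n_j\neq 0$ yield pairwise distinct elements of $G$ whose $G$-lengths are controlled via the undistortion of $K$ in $G$. Together with the classical Grigorchuk formula for the growth series of a free product, this shows that the intrinsic exponential growth rate of $K$ strictly exceeds that of $H$.

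The main obstacle---and the part that I would genuinely have to \emph{do}, rather than simply import---is the third step: upgrading this intrinsic strict gap to a strict gap between the \emph{extrinsic} growth rates $\lambda_{H,S}$ and $\lambda_{K,S}$ in the ambient Cayley graph of $G$. A naive application of the quasi-isometric embedding $K\hookrightarrow G$ is insufficient, since exponential growth rates are not quasi-isometry invariants. Following \cite{CordesRussellSprianoZalloum:regularity}, one overcomes this using Theorem~\ref{appendix:reg_languages}: the Morse local-to-global property yields a regular language of (quasi-)geodesic representatives, and the languages representing $H$ and $K$ are both regular sub-languages with the one for $H$ strictly contained in the one for $K$. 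A Perron--Frobenius argument on the underlying finite-state automaton then converts this strict inclusion into a strict inequality of dominant eigenvalues, giving $\lambda_{H,S}<\lambda_{K,S}\leq\lambda_{G,S}$, as required.
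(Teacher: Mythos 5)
Your proposal takes essentially the same route as the paper: the entire content of the paper's proof is the observation that torsion-freeness in \cite[Theorem~5.1]{CordesRussellSprianoZalloum:regularity} was used only to produce the stable free product $\langle H,h\rangle\cong H\ast\langle h\rangle$, and that the strengthened Corollary~\ref{appendix:MLTG_Morse_imply_free} (via Lemma~\ref{appendix:improved_malnormal}) supplies this without any torsion hypothesis, after which the rest of \cite{CordesRussellSprianoZalloum:regularity} applies verbatim. Your additional points -- that the hypothesis should be read with $H$ stable (the paper omits this word, evidently a typo, and $\mathbb Z<\mathbb Z^2$ shows it is needed), and your sketch of the free-product-normal-form plus regular-language mechanism from \cite{CordesRussellSprianoZalloum:regularity} -- are correct but supplementary; the paper simply cites those steps rather than rederiving them.
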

\begin{proof}
    The proof of \cite[Theorem~5.1]{CordesRussellSprianoZalloum:regularity}
    relies on the fact that if $G$ is torsion-free (or an alternative condition), then for a subgroup $H$ as above there is $h$ so that $\langle H, h \rangle \cong H \ast \langle h \rangle$. This is the only place where the hypothesis is used, as \cite[Theorem~5.1]{CordesRussellSprianoZalloum:regularity} follows from \cite[5.2]{CordesRussellSprianoZalloum:regularity}, that only needs the subgroup as an input. Corollary~\ref{appendix:MLTG_Morse_imply_free} yields the appropriate subgroup without the torsion-free hypothesis that can hence be removed from the above Theorem.
\end{proof}

The proof or Theorem~\ref{appendix:growth_gap} relies on two core results about algorithmical properties of words in Morse local-to-global groups. The first result is a  Morse-geodesic-version of the Gersten and Short result that a subgroup $H$ of a hyperbolic group is quasi-convex if and
only if the language of geodesic words representing $H$ is regular \cite{GerstenShort:rational}. 
\begin{theorem}[{\cite{CordesRussellSprianoZalloum:regularity}, languages for stable subgroups}]\label{appendix:stable_language}
Let $G$ be a Morse local-to-global group generated by a finite set $S$. For a subgroup $H < G$, let $L_H$ be the language of geodesic words in the generating set $S$ representing elements of $H$. Then $H$ is stable if and only if $L_H$ is a regular language and all elements of $L_M$ are $M$--Morse, for some Morse gauge $M$.
\end{theorem}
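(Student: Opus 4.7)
My plan is to handle the two directions separately.

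For the forward direction, suppose $H$ is stable with Morse gauge $M_0$ and radius $R$. Uniform Morseness of $L_H$ is immediate: for $w\in L_H$, the Cayley-graph geodesic $1\to h$ labelled by $w$ joins two elements of $H$, so by stability it is $M_0$--Morse. For regularity, I would adapt the Gersten--Short automaton construction to the Morse setting, using MLTG through a finiteness-of-cone-types result for Morse directions in MLTG groups (established in the earlier parts of \cite{CordesRussellSprianoZalloum:regularity}). The resulting automaton has states recording both the Morse cone type of a prefix and its displacement vector relative to $H$ (bounded by $R$), with transitions induced by right-multiplication by generators and accepting states corresponding to elements of $H$.

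For the backward direction, suppose $L_H$ is regular and all of its elements are $M$--Morse. Fix a DFA $\mathcal{A}$ recognising $L_H$; for each of its finitely many states $q$, let $z_q$ be a shortest word labelling a path from $q$ to an accepting state, and put $R_0 = \max_q |z_q|$. For any $w\in L_H$ and any prefix $u$ of $w$, reading $u$ leaves $\mathcal{A}$ in some state $q$, so $u z_q\in L_H$ and in particular the group element $uz_q$ lies in $H$. Hence $d(u,H)\leq |z_q|\leq R_0$. Combined with left-translation, this shows that for any $h_1,h_2\in H$ the geodesic from $h_1$ to $h_2$ in $G$ lies in $\mathcal{N}_{R_0}(H)$, and by the Morse hypothesis it is $M$--Morse. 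This is exactly the definition of stability with gauge $M$ and radius $R_0$.

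The principal obstacle is the forward direction. The backward direction uses only regularity (through the elementary DFA observation above) together with the Morse hypothesis; MLTG plays no explicit role there. In the forward direction, the challenge lies in defining an appropriate notion of \emph{Morse cone type relative to $H$}, and in proving that only finitely many such cone types appear along geodesics to $H$. This requires a genuine interplay between the combinatorial finiteness of cone types (as in hyperbolic groups) and the Morse local-to-global control furnished by the MLTG hypothesis: without MLTG, long prefixes of Morse geodesics could exhibit arbitrarily complicated cone types, and no finite automaton could track them.
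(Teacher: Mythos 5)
The paper does not prove this theorem: it is quoted in the survey appendix directly from \cite{CordesRussellSprianoZalloum:regularity}, so there is no in-paper argument to compare against. Evaluating your proposal on its own merits:

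Your backward direction is complete and correct, and it is a clean observation. The pruned-DFA argument giving $d(u,H)\leq R_0$ for every prefix $u$ of a word in $L_H$, plus left-translation by $h_1^{-1}$ and the assumed $M$--Morseness of words in $L_H$, does give exactly the definition of stability with gauge $M$ and radius $R_0$. It is also a useful point, which you rightly flag, that this direction makes no use of the MLTG hypothesis: it holds in any finitely generated group. One small care point you implicitly handle: $z_q$ is only defined for states $q$ from which an accepting state is reachable, but every state visited while reading a prefix of an accepted word has this property, so the bound $R_0$ is well-defined after pruning dead states.

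Your forward direction is a plan rather than a proof. You reduce it to a finiteness-of-Morse-cone-types statement for MLTG groups and an adaptation of the Gersten--Short automaton whose states record a Morse cone type together with a bounded $H$-displacement; this is the correct shape of the argument (it is also where the MLTG hypothesis enters), but you then cite the key finiteness ingredient from the very paper under discussion rather than establishing it. For a theorem the paper itself only cites, this is a reasonable level of detail, but it means the forward direction is not self-contained. If you want to close the gap, the thing to prove is: in an MLTG group, for each Morse gauge $M$ there are only finitely many \say{$M$--Morse cone types} (where the cone at a vertex is taken with respect to geodesic continuations that remain $M$--Morse), and this finiteness is exactly what MLTG buys via the regular-language characterization of Morse geodesics; Theorem~\ref{appendix:reg_languages} in this survey is the closest statement to what you need, and the Gersten--Short product construction then finishes.
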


An important corollary of Theorem~\ref{appendix:stable_language} is the following. 

\begin{corollary}[{\cite{CordesRussellSprianoZalloum:regularity}, rationality of growth series}]\label{appendix:rationality_series}
Let $G$ be a Morse local-to-global group  and let $H$ be a stable subgroup of $G$. Given a finite generating set $S$ of $G$, there exist polynomials $P (x), Q(x) \in  Q[x]$ such that 
\[
\sum_{n=0}^{\infty} \mathrm{Growth}_{H,S}(n)x^n= \frac{P(x)}{Q(x)}.
\]
\end{corollary}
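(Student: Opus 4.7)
The plan is to leverage Theorem~\ref{appendix:stable_language} to reduce the rationality of the growth series to the classical fact that the generating function of a regular language, counted by length, is rational. First I would apply Theorem~\ref{appendix:stable_language} to obtain that $L_H$ is a regular language over $S$ and that there exists a Morse gauge $M$ such that every $w \in L_H$ represents an $M$--Morse geodesic in $\mathrm{Cay}(G, S)$. The main obstacle is that distinct words of $L_H$ may represent the same element of $H$, so the generating function of $L_H$ by word-length counts words rather than elements, and is a priori not equal to the growth series of $H$. To fix this, I would extract a regular sublanguage in bijection with $H$.

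Fix a total order on $S$ and let $L_H' \subseteq L_H$ consist of those $w \in L_H$ that are lexicographically smallest among all words of the same length in $L_H$ representing the same element of $G$. By construction, the evaluation map $L_H' \to H$ is a bijection, and $|L_H' \cap S^n|$ is precisely the number of elements of $H$ at $S$--distance exactly $n$ from the identity. Thus $\mathrm{Growth}_{H,S}(n) = \sum_{k \leq n} |L_H' \cap S^k|$, and it suffices to prove that $\sum_n |L_H' \cap S^n| x^n$ is rational. The hardest step is the regularity of $L_H'$. The input is a uniform synchronous fellow-traveller property for Morse geodesics: given $u, v \in L_H$ of the same length with $u =_G v$, both trace out $M$--Morse geodesics with common endpoints, hence by the Morse condition together with Lemma~\ref{lem:reverse_inclusion_QG_nbhd} they lie at uniformly bounded Hausdorff distance $D = D(M)$; since both are arclength-parametrised geodesics between the same two points, an elementary triangle inequality upgrades this to $\dist(u(t), v(t)) \leq 2D$ for every $t$.

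Given this fellow-travelling bound, I would construct an automaton recognising the complement of $L_H'$ in $L_H$ by a product construction with any fixed automaton for $L_H$: alongside the current state, the machine maintains, for each $g$ in the finite ball $B_S(2D) \subseteq G$, a flag recording whether there exists a lexicographically-smaller-so-far competitor $v \in L_H$ with $u(t)^{-1} v(t) = g$. These flags can be updated at each step using only the new letter and the finite state, and the word is rejected when some competitor closes up with $g = 1$. Finiteness of $B_S(2D)$ ensures a finite product state space, so $L_H \setminus L_H'$ is regular, and therefore so is $L_H'$. Finally, the rationality of $\sum_n |L_H' \cap S^n| x^n$ follows from the classical transfer-matrix theorem: counting accepting paths of length $n$ in the finite automaton for $L_H'$ expresses this series as an entry of $(I - xA)^{-1}$ for an integer transition matrix $A$, which is rational over $\mathbb{Q}$; multiplying by $1/(1-x)$ passes from the spherical to the cumulative growth series while preserving rationality.
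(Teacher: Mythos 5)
Your argument is correct and is the standard derivation of this corollary from Theorem~\ref{appendix:stable_language}: regularity of $L_H$, together with the $2D$--synchronous fellow-travelling that you extract from the Morseness of words in $L_H$, yields a regular ShortLex section $L_H'$ in bijection with $H$, and the transfer-matrix theorem then gives rationality of the resulting power series. The paper records this corollary without proof, citing \cite{CordesRussellSprianoZalloum:regularity}, which proceeds along exactly these lines, so you have recovered the intended argument. One small imprecision worth flagging: a boolean flag per word difference $g\in B_S(2D)$ is not enough state for your product automaton, because at the end of the input the machine must know whether some lexicographically smaller competitor actually completes a word of $L_H$. The standard remedy is to store, for each $g\in B_S(2D)$ and each comparison status (equal-so-far or strictly-less-so-far), the set of $L_H$--automaton states reachable by a competing prefix with word difference $g$, and to reject exactly when, at the final step, some strictly-less competitor with word difference $1$ sits in an accepting $L_H$--state. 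This is the usual word-difference construction from the theory of automatic groups, and the fix is routine, so the gap is cosmetic rather than substantive.
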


Notably, Corollary~\ref{appendix:rationality_series} applied to convex-cocompact subgroups of the Mapping Class Group answers question \cite[Chapter 2, Question 3.14]{Farb:problems} of Farb and is thus further evidence that the MLTG property is the correct setting to study Morse geodesics in.

The second core technical result is a language-theoretic characterization of Morse geodesics in MLTG groups. Intuitively, one expects that \say{local-to-global} means that one needs to check only finite information to asses whether a geodesic is Morse. Although this turns out to be correct, the proof of the next theorem is surprisingly subtle, and it requires a slightly different stratification of the Morse boundary compared to the usual one. 

\begin{theorem}[{\cite{CordesRussellSprianoZalloum:regularity}, languages for Morse geodesics}]\label{appendix:reg_languages}
    Let $G$ be a finitely generated group with the Morse local-to-global property and let $S$ be a finite generating set. For each Morse gauge
$M$, there exists a regular language $L_M$ such that
\begin{enumerate}
\item every $M$--Morse geodesic word of $G$ is contained in $L_M$;
\item every element of $L_M$ is an $M'$--Morse geodesic word, where $M'$ is determined by $M$.
\end{enumerate}
\end{theorem}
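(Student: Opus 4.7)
The plan is to use the MLTG property to reduce the global condition of being an $M$--Morse geodesic word to a local, finite-scale check, which can then be recognized by a finite state automaton. First I would apply MLTG (Definition~\ref{defn:MLTG}) to the quasi-geodesic pair $(1,0)$ and the Morse gauge $M$, obtaining a scale $L_0 = L_0(M)$, a quasi-geodesic pair $(\la', \ka')$, and a Morse gauge $M'$ such that any path that is $L_0$--locally an $M$--Morse geodesic is globally an $M'$--Morse $(\la', \ka')$--quasi-geodesic. I would then define $L_M$ as the set of words $w$ in $S^*$ such that every subword of $w$ of length at most $L$ represents an $M$--Morse geodesic segment, where $L \geq L_0$ is to be fixed below.

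Since $|S|$ is finite, the set $\mathcal{W}_{M,L}$ of $M$--Morse geodesic words of length at most $L$ is finite (note that regularity of $L_M$ does not require algorithmic computability of $\mathcal{W}_{M,L}$, only its finiteness). Regularity of $L_M$ then follows from the standard construction of a deterministic finite automaton whose states are words in $S^*$ of length at most $L-1$, tracking the most recently read letters, and whose transition on input $s$ from state $u$ moves to the appropriate suffix of $us$ if this suffix lies in $\mathcal{W}_{M,L}$ and to a sink state otherwise. The inclusion of every $M$--Morse geodesic word in $L_M$ is immediate, since subsegments of $M$--Morse geodesics are themselves $M$--Morse geodesics (the definition quantifies over all quasi-geodesics with endpoints on the segment and is therefore inherited by subsegments).

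For the converse inclusion, any $w \in L_M$ is by construction $L$--locally an $M$--Morse geodesic, and hence by MLTG is globally an $M'$--Morse $(\la', \ka')$--quasi-geodesic, which delivers the Morse part of the conclusion with $M'$ depending only on $M$. The main obstacle is the upgrade from \emph{quasi-geodesic} to \emph{geodesic}: a priori $w$ could satisfy $\|w\|_G < |w|$. To rule this out I would use a bounded-detour argument. If $\bar w$ is a genuine geodesic with the same endpoints as $w$, then $\bar w$ is a $(1,0)$--quasi-geodesic with endpoints on $w$, so by the $M'$--Morse property $\bar w \subseteq \mc{N}_{M'(1,0)}(w)$; by Lemma~\ref{lem:reverse_inclusion_QG_nbhd}, $w$ itself lies in a uniform $R$--neighbourhood of $\bar w$ with $R = R(M', \la', \ka')$. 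A synchronous comparison of $w$ against $\bar w$ would then force a subsegment of $w$ of controlled length $L^* = L^*(R)$ whose endpoints are at distance strictly less than the subsegment length, contradicting that this subsegment is a geodesic. Setting $L = \max\{L_0, L^*(R(M',\la',\ka'))\}$ closes the loop.

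The delicate point, where I expect the \say{slightly different stratification} of the Morse boundary mentioned in the survey to enter, is the order in which MLTG and the bounded-detour bound are applied: the scale $L$ at which geodesicity must be checked is determined by $M'$, which is itself the MLTG output applied to $M$. Parameterising the stratification by a quantity that behaves monotonically under this iteration (for instance a strong-contraction or divergence-type gauge, rather than the Morse gauge alone) is what I expect allows the construction to be carried through uniformly in $M$ and produce the final Morse gauge $M'$ asserted in the statement.
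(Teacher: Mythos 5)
This theorem is cited from \cite{CordesRussellSprianoZalloum:regularity}; the paper itself does not prove it and explicitly warns that its proof is ``surprisingly subtle'' and requires a non-obvious stratification. Your proposal reconstructs the first thing one would try, and it has a genuine gap at exactly the step you flag as delicate: the upgrade from quasi-geodesic to geodesic fails, and not for a repairable quantitative reason.

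The problem is that a word all of whose subwords of length at most $L$ are $M$--Morse geodesic words need not be a geodesic word at all, for \emph{any} choice of $L$. Take $G = \mathbb{Z}$ with generating set $S = \{1, 2\}$. This is a hyperbolic (hence MLTG) group, and all geodesics in $\mathrm{Cay}(\mathbb{Z},S)$ are $M$--Morse for a single gauge $M$. Fix any $L$ and consider $w = (1)(2)^{L-1}(1)$. This word has string length $L+1$ but represents the element $2L$, whose word length is $L$ (via $(2)^L$); so $w$ is not a geodesic word. However, every subword of $w$ of length at most $L$ is of the form $(2)^k$, $(1)(2)^k$, or $(2)^k(1)$ with $k \le L-1$, and each of these is a geodesic word, hence an $M$--Morse geodesic word. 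So $w$ lies in your $L_M$, violating item (2) of the theorem. Since this holds for every $L$, no choice of the scale $L$ can fix the definition.

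This also shows why the ``bounded-detour argument'' cannot work as stated. In the example, $w$ stays within Hausdorff distance $1$ of the geodesic $\bar w = (2)^L$, so the Morse and reverse-inclusion estimates are all available and give you nothing: every subsegment of $w$ of length at most $L$ is a genuine geodesic, and the single unit of lost length is globally distributed rather than concentrated at any bounded scale. This is precisely the phenomenon Cannon's cone-type construction is designed to handle in the hyperbolic case, and the actual proof in \cite{CordesRussellSprianoZalloum:regularity} works with a Morse analogue of cone types, using MLTG to establish that there are finitely many of them for a given gauge. Your closing paragraph correctly intuits that the subtlety is in the stratification, but the language $L_M$ you define is already wrong before that point is reached, so the argument cannot be salvaged along these lines.
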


\subsection{Topological properties}
 A group $G$ can be equipped with the profinite topology, namely the topology on $G$ whose basic open subsets are cosets of finite index subgroups of $G$. The notion of separability in the profinite topology is a very well-studied topic. For instance, the trivial subgroup is separable in $G$ if and only if $G$ is residually finite. Moreover, knowing that a certain subgroup is separable gives significant information about it, for instance it allows to solve the membership problem for such a group.  Ribes and
Zalesskii started the line of enquiry of asking whether the product of certain subgroups is separable, focusing on convex subgroups of the free group (\cite{RibesZalesskii:profinite}), a study that was then further developed by Minasyan and Minasyan--Mineh (\cite{Minasyan:separable, minasyanmineh:quasiconvexity}).

The Morse local-to-global property allows to check that stable subgroups are product separable in a very general setting. 
\begin{theorem}[{\cite{minehspriano:separability}}]\label{appendix:product_separable}
Let $G$ be a finitely generated group with the Morse local-to-global property, and suppose that any stable subgroup of $G$ is separable. Then the product of any stable subgroups of $G$ is separable.    
\end{theorem}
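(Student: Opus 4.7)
The plan is to adapt the Minasyan--Mineh proof of product separability for quasi-convex subgroups of hyperbolic groups to the MLTG setting, replacing hyperbolicity with MLTG and quasi-convexity with stability. Their argument has two essentially independent ingredients: (a) the \emph{bounded packing property} for each individual stable subgroup, meaning that for every $D\geq 0$ there is a bound $N(D)$ on the size of any family of left cosets of the subgroup that are pairwise within distance $D$; and (b) a combinatorial construction that builds a separating finite-index normal subgroup from bounded packing together with individual separability. Ingredient (b) does not depend on the ambient geometry, so the bulk of the work reduces to establishing bounded packing for stable subgroups of MLTG groups.

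First I would prove that every stable subgroup $H$ of a MLTG group $G$ has the bounded packing property. Fix $D\geq 0$ and suppose $\{g_iH\}_{i\in I}$ is a family of pairwise $D$-close cosets; after translating, assume $H$ is one of them. For each $i$, stability forces the ``fellow-travelling zone'' of $H$ and $g_iH$ to be a quasi-isometrically embedded subset along which the two cosets track uniformly, and the generalization of Lemma~\ref{appendix:improved_malnormal} bounds the length of any such fellow-travelling region in terms of $D$ and the stability parameters of $H$. To convert this into a count on $|I|$, one concatenates long geodesic segments inside $H$ with short jumps of length at most $D$ between the cosets; the combination theorem for stable subgroups (Theorem~\ref{appendix:combination_stable}) shows these concatenations are Morse, and applying the MLTG property together with the regular-language description of Morse geodesics (Theorem~\ref{appendix:reg_languages}) yields a uniform finite bound on the number of distinct cosets that can participate.

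Once bounded packing is in hand, I would induct on the number of factors. The base case is the hypothesis that each $H_i$ is separable. For the inductive step, set $K = H_1\cdots H_{n-1}$, assume $g\notin K\cdot H_n$, and aim to produce a finite-index normal subgroup $N\trianglelefteq G$ such that the image of $g$ in $G/N$ avoids the image of $K\cdot H_n$. Bounded packing for $H_n$ guarantees that only finitely many cosets $hH_n$ with $h\in K$ come within a prescribed tolerance of $g$, and separability of $H_n$ combined with separability of $K$ (the latter being the inductive hypothesis) allows each of these finitely many potential approximations to $g$ to be ruled out in a common finite quotient, by taking $N$ to be the intersection of the finitely many finite-index normal subgroups separating $g$ from each approximation.

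The main obstacle is the bounded packing step. In hyperbolic groups it follows quickly from thin triangles, whereas here one only has a Morse-thin-triangle phenomenon. The technical difficulty is ensuring that the concatenations one builds in order to count cosets remain \emph{globally} Morse quasi-geodesics and not merely local ones; this is exactly the role the MLTG property plays, combined with the stable-subgroup combination theorem. Making the resulting packing bound $N(D)$ effective, and checking that the finite-state combinatorial data of Theorem~\ref{appendix:reg_languages} translate into a uniform coset count rather than merely a bound depending on the starting vertex or on the particular Morse gauge, is likely to be the most delicate part of the argument.
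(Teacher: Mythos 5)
The paper does not prove this statement; it cites it directly to \cite{minehspriano:separability} as part of the survey in the appendix, so there is no in-paper argument to compare against. I will therefore assess your proposal on its own merits, and I think it has a genuine gap.

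The gap sits squarely in your step (a). To bound fellow-travelling regions of two distinct cosets $H$ and $g_iH$, you invoke ``the generalization of Lemma~\ref{appendix:improved_malnormal}'', but that lemma is stated for \emph{almost malnormal} stable subgroups, and almost malnormality is exactly what makes the lemma true: if $H\cap gHg^{-1}$ is infinite for some $g\notin H$ (which certainly happens for general stable subgroups, e.g.\ a quasi-convex subgroup of a hyperbolic group that is not malnormal), then the cosets $H$ and $gH$ fellow-travel for unbounded distance and no bound of the kind you describe can exist. Bounded packing for quasi-convex subgroups of hyperbolic groups is a real theorem, but it is \emph{not} proved by bounding the length of fellow-travelling regions of pairs of cosets; the argument needs a subtler finiteness statement, and the same subtlety persists in the MLTG setting. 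So your sketch of bounded packing, as written, does not go through.

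I am also skeptical of your claim that ingredient (b) is a geometry-free combinatorial construction that needs only bounded packing plus individual separability. Already in the base case (free groups, Ribes--Zalesskii) the product separability argument requires substantially more than those two inputs, and Minasyan's hyperbolic argument uses the geometry throughout, not merely in a packing lemma. The approach in Minasyan--Mineh, and the one adapted to MLTG groups in the cited paper, turns on a different key technical statement — roughly, strong quasi-convexity of suitable ``virtual joins'' or of products/double cosets of stable subgroups — and the MLTG property enters to make that quasi-convexity hold globally, in place of hyperbolicity. If you want to repair the proposal, I would replace the bounded-packing pivot with a lemma of that type, proved via the combination theorem (Theorem~\ref{appendix:combination_stable}) and the language-theoretic control of Theorem~\ref{appendix:reg_languages}; the induction on the number of factors can then be run against that quasi-convexity statement rather than against a packing bound.
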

The class of groups to which Theorem~\ref{appendix:product_separable} applies is quite large. For instance, if a group is LERF then any stable subgroup of it is separable. Hence we have:

\begin{corollary}[{\cite{minehspriano:separability}}]
 Let G be a finitely generated LERF group with the Morse local-to-global property. Then the product of any stable subgroups of G is separable.
\end{corollary}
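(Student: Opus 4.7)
The plan is to deduce this corollary directly from Theorem~\ref{appendix:product_separable}. The hypothesis of that theorem requires that every stable subgroup of $G$ be separable in the profinite topology; once that is established, the conclusion about separability of products of stable subgroups is immediate. So the whole task reduces to verifying the hypothesis.

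First I would recall that a group $G$ is \emph{LERF} (locally extended residually finite) precisely when every finitely generated subgroup of $G$ is closed in the profinite topology on $G$, i.e.\ separable. Thus, to apply Theorem~\ref{appendix:product_separable}, it suffices to verify that every stable subgroup of $G$ is finitely generated.

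This is standard and follows from the definition of stability. Let $H \leq G$ be a stable subgroup, with stability constants $(M, R)$. Using Tran's characterization \cite[Theorem~4.8]{Tran:onstrongly} cited in the paper, $H$ is hyperbolic and strongly quasi-convex, and in particular the inclusion $H \hookrightarrow G$ is a quasi-isometric embedding. A standard argument (taking as generators of $H$ the elements of $H$ whose length in a fixed finite generating set of $G$ is bounded by the stability constants) then shows that $H$ is finitely generated. Alternatively, one may invoke the hyperbolicity of $H$ together with the quasi-isometric embedding to conclude that $H$ is finitely generated.

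Putting the pieces together: $G$ LERF implies every finitely generated subgroup of $G$ is separable; every stable subgroup of $G$ is finitely generated; hence every stable subgroup of $G$ is separable. Theorem~\ref{appendix:product_separable} then yields that the product of any stable subgroups of $G$ is separable. The only possible obstacle is the verification that stable subgroups are finitely generated, but this is routine given the known equivalent formulations of stability.
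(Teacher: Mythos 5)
Your argument is correct and matches the paper's (implicit) proof: the paper simply notes that LERF implies stable subgroups are separable and then invokes Theorem~\ref{appendix:product_separable}. Your explicit observation that LERF concerns \emph{finitely generated} subgroups, and that stable subgroups are finitely generated (by Tran's characterisation, they are hyperbolic and strongly quasi-convex, hence finitely generated), usefully fills in a step the paper leaves unsaid.
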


Other classes of groups where the result holds are the following. 
\begin{corollary}[{\cite{minehspriano:separability}}]
A product of stable subgroups in a virtually special group is separable. A product of strongly quasi-convex subgroups in a right-angled-Artin group is separable. 
\end{corollary}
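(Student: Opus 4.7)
The plan is to deduce both statements from Theorem~\ref{appendix:product_separable}, which reduces the product-separability of stable subgroups to two conditions on the ambient group: the MLTG property, and separability of each individual stable subgroup. In both cases, the backbone will be that the ambient group is quasi-isometric to a CAT(0) cube complex (and thus MLTG by Theorem~\ref{appendix:MLTG_examples}), and that cubical technology provides separability of the relevant class of subgroups.

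For the first statement, my plan is the following. Let $G$ be virtually special. After passing to a finite-index subgroup, $G$ acts properly and cocompactly on a CAT(0) cube complex, so $G$ is quasi-isometric to such a cube complex and hence is MLTG by Theorem~\ref{appendix:MLTG_examples}(4), which is a quasi-isometry invariant. To feed Theorem~\ref{appendix:product_separable}, I would next check that every stable subgroup of $G$ is separable. This follows from Haglund--Wise's canonical completion and retraction package: stable subgroups are, by Tran's characterization, strongly quasi-convex and hence quasi-convex with respect to the cubical action, and quasi-convex subgroups of virtually special groups are known to be separable. With both hypotheses in hand, Theorem~\ref{appendix:product_separable} directly yields product-separability of stable subgroups in $G$.

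For the second statement, the main subtlety is that strongly quasi-convex subgroups of a right-angled Artin group $A_\Gamma$ are genuinely more general than stable ones (they may contain large abelian or product pieces coming from joins in $\Gamma$). The plan is to first note that $A_\Gamma$ is the fundamental group of the Salvetti complex, hence is virtually special, so the first statement already resolves the stable case. For the general strongly quasi-convex case, I would invoke the structure theory of strongly quasi-convex subgroups of RAAGs developed by Tran and Genevois, which (virtually) decomposes such a subgroup along the join decomposition of $\Gamma$ into a product of stable subgroups of standard subgroups $A_{\Gamma'}$. A product of strongly quasi-convex subgroups then re-expresses as a product of stable subgroups (in the sub-RAAGs), and the first statement applied to the ambient virtually special group $A_\Gamma$ closes the argument.

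The main obstacle is the RAAG step: one has to be careful that the join/product decomposition of a strongly quasi-convex subgroup interacts well with the product structure of the overall product $H_1 \cdots H_n$ that one is trying to separate, because separability of products is not automatically preserved under regrouping factors. I expect this to require either a direct application of the structure theorem together with bookkeeping of the factors across the $H_i$'s, or a mild strengthening of Theorem~\ref{appendix:product_separable} allowing the subgroups to be strongly quasi-convex in a group where strongly quasi-convex subgroups have a product decomposition into separable pieces. The virtually special step, by contrast, is straightforward once the Haglund--Wise separability of quasi-convex subgroups is invoked.
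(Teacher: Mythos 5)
Your first part (virtually special groups) is essentially the right approach: establish MLTG via the CAT(0)-cubical case of Theorem~\ref{appendix:MLTG_examples}, verify separability of stable subgroups via Haglund--Wise, and feed these into Theorem~\ref{appendix:product_separable}. One point you gloss over is that ``stable $\Rightarrow$ separable'' in a virtually special group is itself a non-trivial intermediate step: stability gives strong quasi-convexity by Tran's characterization, but you then need a result identifying strongly quasi-convex subgroups of cubulated groups with (cubically) convex-cocompact ones (Genevois, or Tran), after which Haglund--Wise canonical completion and retraction yields separability. Also, you switch between $G$ and its finite-index special subgroup somewhat loosely; the bookkeeping (intersecting $H_i$ with the finite-index subgroup, then promoting separability back up) is routine but should be done explicitly.

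The second part is where you have a genuine gap, and you are right to flag it. The difficulty is not merely ``bookkeeping of factors.'' Two issues. First, the structure theorem for strongly quasi-convex subgroups of a RAAG $A_\Gamma$ does not cleanly express such a subgroup as a direct product of \emph{stable} subgroups of standard sub-RAAGs $A_{\Gamma'}$; the decomposition along the join factors $A_\Gamma = A_{\Gamma_1}\times\cdots\times A_{\Gamma_k}$ produces factors that may be \emph{finite index} in the corresponding $A_{\Gamma_i}$ (hence typically not stable, e.g.\ $\mathbb{Z}^m$-pieces), together with stable pieces in the remaining factors, and the decomposition is only up to commensurability. Second, even if each $H_i$ did split as a product of stable subgroups of sub-RAAGs, the set $H_1\cdots H_n\subseteq A_\Gamma$ does not regroup into a product of stable subgroups to which Theorem~\ref{appendix:product_separable} can be applied directly; product separability is a statement about a specific ordered product of subsets, and it is not preserved under the regrouping you would need. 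The argument in \cite{minehspriano:separability} resolving the RAAG case goes beyond a direct invocation of Theorem~\ref{appendix:product_separable}: it crucially uses that the parabolic (sub-RAAG) factors are retracts, so one can handle those factors by retraction, and combines this with the stable-subgroup machinery to handle the remaining pieces. As stated, your plan does not supply the missing reduction, so the second assertion is not established by your proposal.
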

Recently, Sam Shepherd showed with different techniques that a product of convex subgroups of a virtually special group is separable (\cite{shepherd:product}).

\subsection{Dynamical properties}

A very fruitful avenue of study for hyperbolic groups is through their boundaries. The Gromov boundary of a hyperbolic group is a compact space on which the group acts by homeomorphisms with nice dynamical properties. A deep theorem of Bowditch \cite{Bowditch:TopChar} shows that this is in fact a characterization of hyperbolicity. Hence, for a non-hyperbolic group it is difficult to find a notion of boundary that yields a  compact space with good dynamical properties for the action of the group. Regarding compactness, the Morse local-to-global property allows to get the next best result. 

\begin{theorem}[{\cite{HeSprianoZbinden:sigmacompactnes}}]\label{appendix:sigma_cpt}
    Let $G$ be a group satisfying the Morse local-to-global property. Then the Morse boundary of $G$ is strongly $\sigma$--compact.
\end{theorem}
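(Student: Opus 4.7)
The plan is to build an explicit exhaustion of the Morse boundary $\partial_* G$ by Morse gauges, since the paper recalls that $\partial_* G$ is $\sigma$--compact if and only if such an exhaustion exists. The input will be the MLTG property together with a countability observation on the space of Morse gauges.

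First, I would fix a countable family of Morse gauges $\{\tilde M_n\}_{n\in\mathbb{N}}$ that is cofinal under pointwise domination: every Morse gauge $M$ satisfies $M \leq \tilde M_n$ pointwise for some $n$. Such a family exists because the collection of step functions on $[1,\infty)\times[0,\infty)$ taking rational values on rational rectangles is countable, and every Morse gauge $M$ is pointwise bounded above by such a step function. For each $n$, I apply the MLTG property to the Morse gauge $\tilde M_n$ and the quasi-geodesic pair $(1,0)$, obtaining a local scale $L_n$, quasi-geodesic constants $(\lambda'_n,\kappa'_n)$, and a global Morse gauge $M'_n$. Setting $N_n = M'_1 \vee \cdots \vee M'_n$ gives an increasing sequence of Morse gauges.

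To verify that $(N_n)$ is an exhaustion, I take any $M$--Morse geodesic ray $\gamma$ based at the identity and pick $n$ with $M\leq \tilde M_n$. Then $\gamma$ is $\tilde M_n$--Morse, hence every subsegment of length at most $L_n$ is $\tilde M_n$--Morse; equivalently, $\gamma$ is $L_n$--locally an $\tilde M_n$--Morse $(1,0)$--quasi-geodesic. MLTG then yields that $\gamma$ is globally $M'_n$--Morse as a $(\lambda'_n,\kappa'_n)$--quasi-geodesic; since a geodesic is automatically a $(\lambda'_n,\kappa'_n)$--quasi-geodesic, this simply means that $\gamma$ is $M'_n$--Morse and therefore $N_n$--Morse.

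The only nontrivial ingredient is the construction of the cofinal countable family of Morse gauges, which is a soft combinatorial observation; the rest is a direct application of MLTG to geodesic rays. An alternative route, closer in spirit to the paper's proof of Theorem~\ref{thmi:non-sigma}, would be to argue by contradiction: assuming no exhaustion exists, produce quasi-geodesic segments that are $L_n$--locally Morse for increasing $L_n$ but progressively \say{worse} globally, then stitch them into a single Morse ray via Lemma~\ref{lemma:stiching-quasi-geodesics3} and extract a contradiction with the Morse gauge of the resulting ray. Under the full MLTG hypothesis, however, the direct construction above avoids stitching and is the most economical path.
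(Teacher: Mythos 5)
There is a genuine gap, and it is at the foundation of your argument: no countable family of Morse gauges is cofinal under pointwise domination. A Morse gauge is an arbitrary function $M\colon \mathbb{R}_{\geq 1}\times\mathbb{R}_{\geq 0}\to\mathbb{R}_{\geq 0}$, so a standard diagonalization applies: given any countable family $\{\tilde M_n\}$, the gauge $M$ defined by $M(n,0)=\tilde M_n(n,0)+1$ for each integer $n\geq 1$ (and arbitrarily elsewhere) is not bounded above by any $\tilde M_n$. Your proposed witnesses, step functions with rational values on rational rectangles, do not help: if a step function uses only finitely many rectangles it is bounded and so dominates no unbounded gauge, and if it uses countably many rectangles the family of such step functions is $\mathbb{Q}^{\mathbb{N}}$, which is uncountable. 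The sanity check that should have raised a flag is that if a countable cofinal family did exist, then for any Morse ray $\gamma$ (which is $M$--Morse for some $M$) one would immediately get $M\leq \tilde M_n$ and hence $\gamma$ is $\tilde M_n$--Morse, so $\{\tilde M_1\vee\cdots\vee \tilde M_n\}_n$ would be an exhaustion for \emph{every} space, with no hypothesis at all; this contradicts the existence of groups with non-$\sigma$--compact Morse boundary recorded in Example~\ref{counterex:3}. In your argument the MLTG application is in fact redundant: once $M\leq \tilde M_n$ you already know $\gamma$ is $\tilde M_n$--Morse, so the hypothesis you were supposed to use is doing no work.

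For context: the paper does not prove Theorem~\ref{appendix:sigma_cpt} but cites it from \cite{HeSprianoZbinden:sigmacompactnes}, and the survey remarks that the proof there relies essentially on the regular-language characterization of Morse geodesics (Theorem~\ref{appendix:reg_languages}). The countability driving the exhaustion comes from the finitely many regular languages over a finite alphabet that actually occur for a given MLTG group, not from any countable cofinality in the abstract space of Morse gauges. If you want a proof along the lines sketched in this paper, the alternative route you mention at the end — argue by contradiction, assume no exhaustion exists, produce quasi-geodesics that are locally Morse at larger and larger scales but globally escape any prescribed gauge, and stitch them via an analogue of Lemma~\ref{lemma:stiching-quasi-geodesics3} — is the one that has a chance of working, because it only quantifies over the Morse gauges actually realized by rays in $G$. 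A separate, more minor issue is that the stated conclusion is \emph{strong} $\sigma$--compactness, whereas producing an exhaustion only gives $\sigma$--compactness; you would need to address the strengthening as well.
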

Although Theorem~\ref{appendix:sigma_cpt} is concerned with the topology of the boundary, it is worth remarking that the techniques needed to prove it strongly rely on the algorithmical characterization from Theorem~\ref{appendix:reg_languages}.

Theorem~\ref{appendix:sigma_cpt} allows us to present the final class of examples of groups that do not have the Morse local-to-global property. 
\begin{example}\label{counterex:3}
    In \cite{Zbinden:SmallCancellation} the first examples of groups with non $\sigma$--compact Morse boundary were produced, which in particular cannot have the Morse local-to-global property. 
\end{example}

It is worth noting that all groups that are known to not have the Morse local-to-global property are not finitely presented, which motivates the following question. 
\begin{question}
    Is there a finitely presented group that does not have the Morse local-to-global property?
\end{question}

 In light of Theorem~\ref{appendix:sigma_cpt}, the following is a natural weakening of the previous question (See \cite[Question~1]{HeSprianoZbinden:sigmacompactnes}).
\begin{question}
    Let $G$ be a finitely presented group with strongly $\sigma$--compact Morse boundary. Does $G$ have the MLTG property?
\end{question}

It is natural to wonder if, as in the case of hyperbolic groups, one can characterize Morse local-to-global groups by the topology and dynamics on their boundaries. In this direction, there is a partial result for a particular type of infinitely presented groups. 

\begin{theorem}\cite{HeSprianoZbinden:sigmacompactnes}
    Let $G = \langle S\mid R\rangle $ be a $C'(1/9)$ group. Then $G$ has the Morse local-to-global property if and only if the Morse boundary of $G$ is strongly $\sigma$--compact. 
\end{theorem}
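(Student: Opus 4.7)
The plan is to prove the two implications separately, noting that one direction is general and only the reverse direction uses the small cancellation hypothesis.

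The forward direction, that MLTG implies strong $\sigma$--compactness of the Morse boundary, is a direct invocation of Theorem~\ref{appendix:sigma_cpt}, which holds for any group satisfying MLTG (with no small cancellation hypothesis). So no work is needed here beyond citing it.

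For the reverse direction, my plan is to factor the argument through the weak MLTG property. First, I would show that any $C'(1/9)$ group $G$ admits a bounded $(\lao,\kao)$--quasi-geodesic combing of its Cayley graph. The combing can be built from the standard small cancellation machinery: given an ordered pair of vertices $(x,y)$ in $\mathrm{Cay}(G,S)$, one selects $\col_{xy}$ to be a geodesic chosen by a fixed equivariant rule (e.g.\ shortlex with respect to some ordering of $S$), and then one verifies the bounded fellow-traveller inequality $\dist(\col_{xy_1}(t),\col_{xy_2}(t))\leq \kao\dist(y_1,y_2)+\kao$ by running a van Kampen diagram argument between the two combing lines, using the $C'(1/9)$ condition to control the boundary overlaps with interior pieces. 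Once the combing is in place, Theorem~\ref{thmi:WMLTG} applies and yields that $G$ satisfies the weak MLTG property.

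Second, having the weak MLTG property together with the hypothesis that the Morse boundary of $G$ is strongly $\sigma$--compact, and since $G$ acts on its Cayley graph by isometries coboundedly (indeed transitively on vertices), I would invoke Theorem~\ref{thmi:non-sigma} to upgrade weak MLTG to the full MLTG property. This completes the equivalence.

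The main obstacle in this plan is the construction of the bounded combing for $C'(1/9)$ groups; the fellow-traveller property is not automatic from the small cancellation condition and requires a careful diagrammatic argument, ensuring the bound on the defect is uniform in the endpoints. An alternative route, perhaps simpler, would be to adapt the combinatorial arguments of \cite{HeSprianoZbinden:sigmacompactnes} directly to establish weak MLTG in $C'(1/9)$ groups without explicitly producing a combing, and then still apply Theorem~\ref{thmi:non-sigma}. Either way, the key conceptual point is that for $C'(1/9)$ groups the failure of the MLTG property is entirely concentrated in the topology of the Morse boundary, since the weak MLTG property itself is available from the underlying combinatorial geometry.
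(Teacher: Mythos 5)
The forward implication is handled correctly: Theorem~\ref{appendix:sigma_cpt} is stated for arbitrary MLTG groups and directly gives that the Morse boundary is strongly $\sigma$--compact, with no small cancellation input needed. That part of your plan is sound. Also, the passage from ``strongly $\sigma$--compact'' in the hypothesis to the ``$\sigma$--compact'' hypothesis of Theorem~\ref{thmi:non-sigma} is legitimate, and the observation that $G$ acts coboundedly on its Cayley graph is correct.

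The reverse implication, however, rests on an unestablished and almost certainly false claim: that every $C'(1/9)$ group admits a bounded quasi-geodesic combing. Keep in mind the statement is really about \emph{infinitely} presented $C'(1/9)$ groups (a finitely presented $C'(1/9)$ group is hyperbolic, so the theorem is trivial there). In an infinitely presented small cancellation group the relators have unbounded length, and a single disc face whose boundary splits as two geodesic halves already shows that geodesic bigons need not be uniformly thin; a shortlex/van Kampen argument therefore does not produce a defect bound that is uniform in the endpoints, which is precisely the content of the fellow-traveller inequality you need. The paper itself lists ``$C'(1/9)$ groups with $\sigma$--compact Morse boundary'' (item 8 of Theorem~\ref{appendix:MLTG_examples}, cited to \cite{HeSprianoZbinden:sigmacompactnes}) and ``spaces with a bounded combing and $\sigma$--compact Morse boundary'' (item 9, via Theorem~\ref{thm:main_intro}) as \emph{separate} examples, consistent with the $C'(1/9)$ case not being a special case of the combing case. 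So your main route has a genuine gap at the very first step, and your fallback — ``adapt the combinatorial arguments of \cite{HeSprianoZbinden:sigmacompactnes} directly'' — is too vague to count as a plan; it just defers to the source you are meant to be reconstructing. What is missing is a concrete mechanism, other than a global bounded combing, to establish the weak MLTG property (or some other hypothesis of a promotion theorem) for infinitely presented $C'(1/9)$ groups; the cited reference presumably supplies that via a dedicated small-cancellation argument, not via Theorem~\ref{thmi:WMLTG}.
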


As mentioned, Morse local-to-global groups tend to have stable free subgroups (Corollary~\ref{appendix:MLTG_Morse_imply_free}). In addition to algebraic properties, this has deep implications about the possible measures on boundaries associated with Morse local-to-global groups. The next theorem is phrased using the notion of \emph{geodesic boundary} of \cite{HeSprianoZbinden:sigmacompactnes}. Roughly, this means any notion of boundary in which the sentence \say{the points in the boundary coming from Morse geodesics} is meaningful. 

\begin{theorem}[{\cite{HeSprianoZbinden:sigmacompactnes}}]\label{appendix:measures}
Let $G$ be a non-hyperbolic group with the Morse local to-global property and acting geometrically on a space containing a contracting ray. Let $\mu$ be a probability measure on $G$.
Then for any geodesic boundary $B$ of $G$ the image of the set of Morse rays has measure zero in $B$ with
respect to any $\mu$–stationary measure.
\end{theorem}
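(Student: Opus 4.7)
The plan is to combine three results from the survey: the $\sigma$--compactness of the Morse boundary (Theorem~\ref{appendix:sigma_cpt}), the existence of a stable free subgroup in any MLTG group containing a Morse element (Corollary~\ref{appendix:MLTG_Morse_imply_free}), and a Furstenberg-type contraction argument for $\mu$-stationary measures against a proximal action on the Morse boundary.

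\textbf{Step 1 (Reduction to compact Morse pieces).} Let $\mathcal{M}\subseteq B$ denote the image in $B$ of the set of Morse rays. By Theorem~\ref{appendix:sigma_cpt}, the Morse boundary $\mb G$ decomposes as a countable increasing union $\bigcup_n K_n$ of compact subsets, each $K_n$ parametrising rays that are $M_n$--Morse for a fixed Morse gauge $M_n$. Writing $\overline{K}_n\subseteq\mathcal{M}$ for the image of $K_n$ in $B$, the countable subadditivity of $\nu$ reduces the theorem to proving that $\nu(\overline{K}_n)=0$ for every $n$.

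\textbf{Step 2 (A stable free subgroup with north--south dynamics).} The assumed contracting ray is in particular Morse, so Lemma~\ref{appendix:ray_to_element} yields a Morse element $g\in G$. Since $G$ is non-hyperbolic, $\langle g\rangle$ has infinite index in $G$, and Corollary~\ref{appendix:MLTG_Morse_imply_free} then furnishes $h\in G$ such that $F=\langle g,h\rangle\cong F_2$ is stable in $G$. Every infinite-order element $f\in F$ is Morse, with two fixed points $\xi_f^{\pm}\in\mathcal{M}$, and its iterates contract compact subsets of $B\setminus\{\xi_f^{-}\}$ toward $\xi_f^{+}$. This north--south behaviour on the Morse part of $B$ is the dynamical manifestation of the strongly contracting property enjoyed by Morse quasi-geodesics in MLTG settings.

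\textbf{Step 3 (Contradiction via stationarity).} Assume toward contradiction that $\nu(\overline{K}_n)>0$ for some $n$. Iterating the stationarity identity $\nu=(\mu^{\ast k})_{\ast}\nu$ and using the ping-pong action of $F$ from Step~2, one extracts weak-$\ast$ limits of translates $(g^{m_i})_{\ast}\nu$ supported on the finite fixed-point set $\mathrm{Fix}(F)\subseteq\mathcal{M}$; the weak-$\ast$ extraction is performed inside the compact piece $\overline{K}_n$ where compactness is available. Standard Furstenberg boundary theory then forces $\nu$ itself to put positive mass on $\mathrm{Fix}(F)$. Non-hyperbolicity of $G$ guarantees $B\neq\mathcal{M}$, and a ping-pong with a conjugate $\gamma F\gamma^{-1}$ whose fixed-point set is disjoint from $\mathrm{Fix}(F)$ is incompatible with $\nu$ simultaneously having positive mass on both $\mathrm{Fix}(F)$ and $\mathrm{Fix}(\gamma F\gamma^{-1})$ while satisfying $\nu=(\mu^{\ast k})_{\ast}\nu$.

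\textbf{Main obstacle.} The decisive step is Step~3. The geodesic boundary $B$ is not assumed to be compact or metrisable, so the weak-$\ast$ extraction has to be carried out locally inside each compact piece $\overline{K}_n$, and the north--south contraction is effective only on $\mathcal{M}$ (it is a consequence of the Morse property). The argument must therefore thread carefully between the compact Morse pieces, where contraction and proximality give quantitative control, and the non-Morse complement $B\setminus\mathcal{M}$, which is non-empty precisely because $G$ is not hyperbolic; the stationarity of $\nu$ is the bridge that transmits the local dynamics on $\mathcal{M}$ into a global constraint incompatible with $\nu(\overline{K}_n)>0$.
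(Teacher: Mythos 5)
The paper itself does \emph{not} prove Theorem~\ref{appendix:measures}: it appears in the survey appendix with the citation \cite{HeSprianoZbinden:sigmacompactnes} in the theorem header and no proof is given, so there is no internal proof to compare your attempt against. Judging your proposal on its own terms, the scaffolding of Steps~1 and~2 (the $\sigma$--compact exhaustion of the Morse boundary via Theorem~\ref{appendix:sigma_cpt}, and the production of a stable free subgroup via Lemma~\ref{appendix:ray_to_element} and Corollary~\ref{appendix:MLTG_Morse_imply_free}) is sound and these are indeed natural ingredients.

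The genuine gap is Step~3, which carries the whole argument but is not actually an argument. First, you assert that the iterates of a Morse element $f$ contract compacts of $B\setminus\{\xi_f^-\}$ toward $\xi_f^+$; this north--south behaviour is a theorem on the Morse boundary (and on the Gromov boundary of a hyperbolic space), but a ``geodesic boundary'' $B$ in the sense of \cite{HeSprianoZbinden:sigmacompactnes} is a much weaker object --- it is only required to make the phrase ``points coming from Morse geodesics'' meaningful --- and nothing guarantees that the dynamics of $f$ on $B\setminus\mathcal{M}$ are contracting, or even that $f$ fixes only two points of $B$. Second, the claim that ``standard Furstenberg boundary theory then forces $\nu$ itself to put positive mass on $\mathrm{Fix}(F)$'' is not a standard consequence of proximality: the Furstenberg machinery produces conditional Dirac limits along sample paths, not atoms of the stationary measure. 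Third, even granting an atom on $\mathrm{Fix}(F)$, you do not explain why $\nu$ simultaneously charging $\mathrm{Fix}(F)$ and $\mathrm{Fix}(\gamma F\gamma^{-1})$ would violate stationarity --- quasi-invariance of $\nu$ in fact \emph{forces} $\nu$ to charge every translate of a positive-measure set, so the two fixed-point sets being charged is not by itself contradictory; what is needed is a non-atomicity or infinitely-many-disjoint-translates argument, which you do not supply. Finally, non-hyperbolicity is invoked only to say $B\neq\mathcal{M}$, but that fact is never actually used: the contradiction you sketch would, if it worked, apply verbatim to a hyperbolic group (where the theorem is false, as Morse rays then have full measure), so the proposal cannot be correct as written.
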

The main example of a geodesic boundary is the horofunction boundary, which we remark admits a stationary measure since it is compact. Other examples are the HHG boundary of $G$ in the case when $G$ is an HHG, or the quasi-redirecting boundary if $G$ is an ATM space as defined in \cite{qingrafi:quasi}.

We conclude by recording interesting facts about the dynamics of Morse elements in Morse local-to-global groups. 

\begin{theorem}[{\cite{CordesRussellSprianoZalloum:regularity}}]
Let $G$ be a Morse local-to-global group.
\begin{enumerate}
    \item The set of attracting fixed points of Morse elements of G is dense in the Morse boundary $\partial_\ast G$.
    \item If $H$ is an infinite normal subgroup of $G$, then the limit set of $H$ in $\partial_\ast G$ coincides with $\partial_\ast G$.
\end{enumerate}
\end{theorem}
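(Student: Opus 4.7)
The theorem has two parts, which I will prove in sequence with (1) feeding into (2). For part (1), fix $\xi\in\partial_\ast G$ represented by an $M$--Morse geodesic ray $\gamma$ based at the identity. The plan is to use the MLTG property to produce, for each large $n$, a Morse element of $G$ whose attracting fixed point approximates $\xi$. Let $L$ be the scale given by applying the MLTG definition to the input $(M,(1,0))$. Pick $g_n\in G$ with $g_n\cdot e$ near $\gamma(n)$, and form the bi-infinite concatenation
\[
\alpha_n \;=\; \cdots\cdot g_n^{-1}\gamma|_{[0,n]}\cdot \gamma|_{[0,n]}\cdot g_n\gamma|_{[0,n]}\cdot g_n^2 \gamma|_{[0,n]}\cdots.
\]
For $n$ sufficiently larger than $L$, every length-$L$ subpath of $\alpha_n$ meets at most two consecutive translates of the $M$--Morse segment $\gamma|_{[0,n]}$, forcing $\alpha_n$ to be $L$--locally a Morse quasi-geodesic with uniform data. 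The MLTG property then upgrades $\alpha_n$ to a global Morse quasi-geodesic, so $\alpha_n$ serves as an axis for the Morse element $g_n$. Its forward endpoint $\xi_n^+\in\partial_\ast G$ is the attracting fixed point of $g_n$; since $\alpha_n$ and $\gamma$ agree on $[0,n]$, we have $\xi_n^+\to\xi$ as $n\to\infty$.

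For part (2), the corollary already recorded in the excerpt (an infinite normal subgroup of an MLTG group containing a Morse element must itself contain a Morse element) produces $h\in H$ Morse, with attracting fixed point $\xi^+ \in\partial_\ast G$. Normality of $H$ gives $ghg^{-1}\in H$ for every $g\in G$, with attracting fixed point $g\cdot\xi^+$, so the iterates $(ghg^{-1})^n\cdot e\in H$ converge to $g\cdot\xi^+$ and the entire $G$--orbit $G\cdot\xi^+$ lies in the limit set of $H$ inside $\partial_\ast G$. It therefore suffices to show that $G\cdot\xi^+$ is dense in $\partial_\ast G$.

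Given $\eta\in\partial_\ast G$, part (1) supplies Morse elements $k_j\in G$ with attracting fixed points $\eta_j^+\to\eta$, so it is enough to show $\eta_j^+\in\overline{G\cdot\xi^+}$ for each $j$. Here I would invoke the north-south dynamics of Morse elements on $\partial_\ast G$: the powers $k_j^m$ carry every $\zeta\in\partial_\ast G\setminus\{\eta_j^-\}$ to $\eta_j^+$ as $m\to\infty$, where $\eta_j^-$ is the repelling fixed point of $k_j$. Because $G$ is non-elementary (it contains Morse elements but is not virtually cyclic), some $g_0\in G$ satisfies $g_0\cdot\xi^+\neq\eta_j^-$. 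Then $k_j^m g_0\xi^+\to\eta_j^+$ with every term in $G\cdot\xi^+$, so $\eta_j^+\in\overline{G\cdot\xi^+}$, and passing to the limit in $j$ gives $\eta\in\overline{G\cdot\xi^+}$.

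The step I expect to be most subtle is verifying the north-south dynamics on $\partial_\ast G$, since its direct-limit topology over Morse strata is finer than that of the Gromov boundary of a hyperbolic group. I would handle it by observing that the axis of $k_j$ is Morse, so for any Morse ray $\rho$ representing $\zeta\neq\eta_j^-$ the translates $k_j^m\rho$ fellow-travel the forward axis of $k_j$ for distances tending to infinity; the convergence $k_j^m\zeta\to\eta_j^+$ then takes place inside a fixed Morse stratum $\partial_\ast^N G$, within which it can be extracted from uniform fellow-travelling together with compactness of the stratum.
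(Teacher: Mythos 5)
The paper records this theorem in its survey appendix with only a citation to \cite{CordesRussellSprianoZalloum:regularity} and gives no proof of its own, so your proposal cannot be matched against an in-paper argument; I'll therefore just assess it on its merits.

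Your plan for part (1) has a genuine gap at the point where you assert that every length-$L$ subpath of $\alpha_n$ is a Morse quasi-geodesic because it ``meets at most two consecutive translates of the $M$--Morse segment $\gamma|_{[0,n]}$.'' Meeting at most two translates is not enough: the concatenation of two Morse geodesic pieces at a junction can backtrack, and if it does it is not even a local quasi-geodesic, so the MLTG hypothesis never applies. Choosing $g_n$ merely so that $g_n\cdot e$ is near $\gamma(n)$ gives no control over the direction in which $g_n\gamma|_{[0,n]}$ departs from that junction. The correct move (as in the construction in \cite{RussellSprianoTran:thelocal} that underlies Lemma~\ref{appendix:ray_to_element}) is to pick $g_n$ by a pigeonhole argument so that it approximately translates a long subsegment of $\gamma$ onto a further subsegment of $\gamma$ with matching orientation; only then is the junction coarse-geodesic, and only then can you invoke Lemma~\ref{lemma:monster}\eqref{prop:concatentaion} together with MLTG to globalize. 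As written, your $\alpha_n$ is not known to be a local quasi-geodesic and the rest of part~(1) does not get off the ground.

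Granting a corrected part (1), your reduction of part (2) is structurally sound: you correctly use Corollary 3.6 of \cite{RussellSprianoTran:thelocal} (quoted in the appendix) to produce a Morse element of $H$, you correctly use normality to place the whole orbit $G\cdot\xi^+$ in the limit set, and the push-by-powers argument to reach a dense set of attracting fixed points is standard. But you should be explicit that you are invoking an external north--south dynamics theorem for Morse elements on the Morse boundary, which is not proved in this paper and is not a formal consequence of anything stated in it; you flag this as the subtle step, and indeed it carries the weight of part (2). You should also note that the entire argument assumes $\partial_*G\neq\emptyset$ (otherwise both parts are vacuous), and that this nonemptiness, via Lemma~\ref{appendix:ray_to_element}, is what licenses applying the normal-subgroup corollary.
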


\subsection{Relation to acylindricity and Morse detectability}
We conclude the survey with an overview of what, according to the authors, are some interesting research directions stemming from the Morse local-to-global property. The first one relates to a major open question in the field of quasi-isometric classification of groups. It is an easy exercise to show that if $G\to H$ is an \emph{equivariant} quasi-isometry and $H$ is acylindrically hyperbolic, then so it is $G$. However, whether the property of being acylindrically hyperbolic is quasi-isometric invariant in the class of groups is a longstanding open problem, highlighting the difficulty of deducing algebraic data from geometric ones. A motivating question for the study of Morse local-to-global groups is the following. 

\begin{question}\label{appendix:question_AH}
    Let $G$ be an acylindrically hyperbolic Morse local-to-global group, and let $H$ be quasi-isometric to $G$. Is $H$ acylindrically hyperbolic?
\end{question}
Note that since the MLTG property is preserved by quasi-isometry, $H$ is an MLTG group as well. Thus, the above Question is implied by the following.
\begin{question}\label{appendix:question_Morse_imply_AH}
    Let $G$ be a non virtually-cyclic Morse local-to-global group with non-empty Morse boundary. Is $G$ acylindrically hyperbolic?
\end{question}
Reformulating Question~\ref{appendix:question_AH} to Question~\ref{appendix:question_Morse_imply_AH} already shows one of the main strengths of the MLTG property, namely to allow the promotion of geometric data to algebraic ones using Lemma~\ref{appendix:ray_to_element}. Moreover, the MLTG property seems to imply that the Morse elements are \say{well behaved}. The first attempt to answer Question~\ref{appendix:question_Morse_imply_AH} is to pick a Morse element and use its axis to attempt the projection complex construction of \cite{BBF}, which would yield an acylindrical action on a hyperbolic space. However, the strategy as stated cannot work due to the following Theorem.

\begin{theorem}[{\cite[Theorem~1.1]{abbottzbinden:non-loxodromic}}]\label{appendix:acylindrical-mltg-monster}
    There exists a Morse local-to-global group $G$ and an infinite order Morse element $a\in G$ such that $a$ is not loxodromic in any action of $G$ on a hyperbolic space. In particular, the element $a$ is not WPD in any such action.
\end{theorem}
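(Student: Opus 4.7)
The theorem is an existence statement, so the approach is to construct an explicit example rather than to argue abstractly. The algebraic key is the elementary observation that if $a^n$ is conjugate to $a^m$ in $G$ with $|n|\neq|m|$, then in any isometric action of $G$ on a metric space the translation lengths of $a^n$ and $a^m$ agree, forcing $\ell(a)=0$. In a hyperbolic action this rules out $a$ being loxodromic, and since every WPD element is in particular loxodromic, the second assertion follows automatically from the first. So the whole task reduces to exhibiting an MLTG group in which a single infinite-order Morse element is conjugate to a proper power of itself.

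The candidate group is built from a hyperbolic seed by a controlled perturbation. Start with $F = F(a,b)$, which is hyperbolic and hence MLTG, and in which $a$ is automatically an infinite-order Morse element. Form a quotient $G = F/\langle\langle \cR \rangle\rangle$ (or a one-letter HNN-type extension followed by such a quotient) in which the defining family $\cR$ is chosen to encode a relation of the shape $u\, a^n u^{-1} = a^m$ with $|n|\neq|m|$, while simultaneously satisfying a strong metric small cancellation condition such as $C'(1/9)$ with respect to the generating set of $F$. By Theorem~\ref{appendix:MLTG_examples}, verifying the MLTG property for such a quotient reduces to verifying that the Morse boundary of $G$ is $\sigma$--compact. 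Morse rays in a $C'(1/9)$-presentation admit a natural stratification according to the relator discs they traverse, and the plan is to obtain a $\sigma$--compact exhaustion of $\mb G$ from this stratification.

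Two further verifications remain. First, one must check that $a$ remains an infinite-order Morse element of $G$: the small cancellation calculus bounds the proportion of any relator that can appear as a subword of a geodesic representing a large power of $a$, so the $\langle a\rangle$-axis stays close to its shape in $F$, and a Strebel-type classification of thin regions lets one propagate the Morse property from $F$ to $G$. Second, one must confirm that the new relation genuinely identifies $a^n$ with a conjugate of $a^m$ in $G$; this is immediate from the construction, provided the relator is not accidentally collapsed.

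The main obstacle will be the calibration performed in the construction of $\cR$: the relators must be long and structured enough to both force the nontrivial conjugacy $a^n \sim a^m$ and to deliver $\sigma$--compactness of $\mb G$, yet mild enough that $a$ itself survives as a Morse element. Striking this balance — simultaneously breaking the potential loxodromicity of $a$ while preserving its Morse behaviour along $\langle a\rangle$ — is the technical heart of the argument and where the bulk of the small cancellation geometry will be used.
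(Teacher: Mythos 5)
The theorem you are trying to prove is, in this paper, simply cited from Abbott--Zbinden; the paper does not reprove it. But more importantly, the algebraic mechanism you propose cannot work, and the gap is fatal rather than a matter of calibration.

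Your plan rests on arranging a relation $u a^n u^{-1} = a^m$ in $G$ with $|n| \neq |m|$, which indeed forces $a$ to have zero translation length in every isometric action and hence to be non-loxodromic. The problem is that this relation is incompatible with $a$ being a Morse element of $G$. In the Cayley graph of $G$ the stable translation length $\tau(g) = \lim_n d(1, g^n)/n$ is a conjugacy invariant and is homogeneous in powers, $\tau(g^k) = |k|\,\tau(g)$. If $a$ is Morse (or even just undistorted), the map $n \mapsto a^n$ is a quasi-geodesic, so $\tau(a) > 0$. Conjugating $a^n$ to $a^m$ would then give $|n|\,\tau(a) = |m|\,\tau(a)$, hence $|n| = |m|$. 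So no Morse element of infinite order in any finitely generated group can be conjugate to a proper power of itself, and the small cancellation calculus cannot rescue this: the very property you must preserve ($a$ Morse) is what rules out the relation you want to impose. This is precisely why the statement is interesting and nontrivial, because the naive obstructions to loxodromicity are unavailable.

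The construction of Abbott--Zbinden does not attempt to collapse the translation length of $a$ intrinsically. Instead it is a graded small cancellation group built so that in \emph{any} action of $G$ on a hyperbolic space the images of arbitrarily long segments of the axis of $a$ are forced, via the relator geometry and hyperbolic thin-triangle estimates, to have uniformly bounded diameter; one then deduces that $a$ is elliptic or parabolic in every such action while retaining, via the small cancellation geometry and the $\sigma$--compactness criterion, both the Morse property of $a$ in the Cayley graph and the MLTG property of $G$. Your proposal correctly anticipates the small cancellation framework and the route through Theorem~\ref{thmi:non-sigma}, but it is aimed at an impossible target; you would need to replace the conjugacy mechanism with an argument that constrains the behaviour of $a$ in \emph{external} hyperbolic actions without altering its \emph{internal} stable translation length.
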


Although the above Theorem seems discouraging, there are many reasons why it is not. Firstly, such a group $G$ is acylindrically hyperbolic, not contradicting Question~\ref{appendix:question_Morse_imply_AH}. Secondly, the group constructed in Theorem~\ref{appendix:acylindrical-mltg-monster} is not finitely presented. Perhaps this suggests that Question~\ref{appendix:question_Morse_imply_AH} should be asked for finitely presented groups instead.

Another version of the question relates to the notion of \emph{Morse detectability}. A space $X$ is Morse detectable if there exists a hyperbolic space $Y$ and a coarsely Lipschitz projection $\pi \colon X \to Y$ so that a quasi-geodesic $\gamma$ of $X$ is Morse if and only if the composition $\pi\circ\gamma$ is a quasi-geodesic of $Y$, where the Morse gauge and quasi-geodesic constants each depend on the other (\cite[Definition~4.17]{RussellSprianoTran:thelocal}). The main source of inspiration is the Mapping Class Group, where the curve graph is the associated hyperbolic space \cite{MasurMinsky:complex1, DurhamTaylor:convexcocompact}. Any Morse detectable space satisfies the MLTG property, whereas the opposite is not known. This allows us to consider a stronger notion of MLTG groups, and hence we ask the natural following question.
\begin{question} 
    Is the following true: All MLTG groups with equivariant Morse detectability space are acylindrically hyperbolic? 
\end{question}
There has been recent progress on constructing hyperbolic spaces that satisfy Morse detectability properties (\cite{PetytSprianoZalloum:hyperbolic, petytzalloumspriano:constructing, Zbinden:hyperbolic}), making this a very exciting moment for the study of Morse detectable spaces.

\bibliographystyle{alpha} 
\bibliography{cornelia}

\def\cprime{$'$} \def\cprime{$'$} \def\cprime{$'$} \def\cprime{$'$} \def\cprime{$'$}
\begin{thebibliography}{DDLS24}

\bibitem[ADT17]{aougabdurhamtaylor:pulling}
T.~Aougab, M.~G. Durham, and S.~J. Taylor.
\newblock Pulling back stability with applications to {${\rm Out}(F_n)$} and relatively hyperbolic groups.
\newblock {\em J. Lond. Math. Soc. (2)}, 96(3):565--583, 2017.

\bibitem[Arz01]{Arzhantseva:quasiconvex}
G.~Arzhantseva.
\newblock On quasiconvex subgroups of word hyperbolic groups.
\newblock {\em Geometriae Dedicata}, 98:191--208, 2001.

\bibitem[AZ25]{abbottzbinden:non-loxodromic}
Carolyn Abbott and Stefanie Zbinden.
\newblock A non-loxodromic morse element in a morse local-to-global group, 2025.

\bibitem[BBF10]{BBF}
M.~Bestvina, K.~Bromberg, and K.~Fujiwara.
\newblock Constructing group actions on quasi-trees and applications to mapping class groups.
\newblock preprint, \textsc{arXiv:1006.1939v3}, 2010.

\bibitem[BBI01]{Burago-Ivanov}
D.~Burago, Yu. Burago, and S.~Ivanov.
\newblock {\em A course in metric geometry}, volume~33 of {\em Graduate Studies in Mathematics}.
\newblock American Mathematical Society, Providence, RI, 2001.

\bibitem[BH99]{BridsonHaefliger}
M.~Bridson and A.~Haefliger.
\newblock {\em Metric spaces of non-positive curvature}.
\newblock Springer, 1999.

\bibitem[BHS17]{BehrstockHagenSisto:hierarchically}
Jason Behrstock, Mark~F. Hagen, and Alessandro Sisto.
\newblock Hierarchically hyperbolic spaces, {I}: {C}urve complexes for cubical groups.
\newblock {\em Geom. Topol.}, 21(3):1731--1804, 2017.

\bibitem[Bon96]{Bonk:quasigeodesic}
Mario Bonk.
\newblock Quasi-geodesic segments and {G}romov hyperbolic spaces.
\newblock {\em Geom. Dedicata}, 62(3):281--298, 1996.

\bibitem[Bow98]{Bowditch:TopChar}
B.~Bowditch.
\newblock A topological characterisation of hyperbolic groups.
\newblock {\em Jour. Amer. Math. Soc.}, 11(3):643--667, 1998.

\bibitem[Can84]{Cannon(1984)}
J.~Cannon.
\newblock The combinatorial structure of cocompact discrete hyperbolic groups.
\newblock {\em Geom. Dedicata}, 16:123--148, 1984.

\bibitem[CCS23]{CharneyCordesSisto:complete}
Ruth Charney, Matthew Cordes, and Alessandro Sisto.
\newblock Complete topological descriptions of certain {M}orse boundaries.
\newblock {\em Groups Geom. Dyn.}, 17(1):157--184, 2023.

\bibitem[CD19]{cordesdurham:boundary}
Matthew Cordes and Matthew~Gentry Durham.
\newblock Boundary convex cocompactness and stability of subgroups of finitely generated groups.
\newblock {\em Int. Math. Res. Not. IMRN}, (6):1699--1724, 2019.

\bibitem[Cor17]{Cordes:Morse}
Matthew Cordes.
\newblock Morse boundaries of proper geodesic metric spaces.
\newblock {\em Groups Geom. Dyn.}, 11(4):1281--1306, 2017.

\bibitem[CRSZ22]{CordesRussellSprianoZalloum:regularity}
Matthew Cordes, Jacob Russell, Davide Spriano, and Abdul Zalloum.
\newblock Regularity of {M}orse geodesics and growth of stable subgroups.
\newblock {\em J. Topol.}, 15(3):1217--1247, 2022.

\bibitem[CS15]{CharneySultan:contracting}
Ruth Charney and Harold Sultan.
\newblock Contracting boundaries of {$\rm CAT(0)$} spaces.
\newblock {\em J. Topol.}, 8(1):93--117, 2015.

\bibitem[CSZ23]{CordesSistoZbinden:corrigendum}
Matthew Cordes, Alessandro Sisto, and Stefanie Zbinden.
\newblock Corrigendum to the paper “morse boundaries of proper geodesic metric spaces”.
\newblock {\em Groups, Geometry, and Dynamics}, 2023.

\bibitem[DDLS24]{DowdallDurhamLeiningerSisto:extensionsII}
Spencer Dowdall, Matthew~G. Durham, Christopher~J. Leininger, and Alessandro Sisto.
\newblock Extensions of {V}eech groups {II}: {H}ierarchical hyperbolicity and quasi-isometric rigidity.
\newblock {\em Comment. Math. Helv.}, 99(1):149--228, 2024.

\bibitem[DFW19]{DahmaniFuterWise:growth}
Fran\c~cois Dahmani, David Futer, and Daniel~T. Wise.
\newblock Growth of quasiconvex subgroups.
\newblock {\em Math. Proc. Cambridge Philos. Soc.}, 167(3):505--530, 2019.

\bibitem[DG11]{Dahmani-Guirardel(2011)}
F.~Dahmani and V.~Guirardel.
\newblock The isomorphism problem for all hyperbolic groups.
\newblock {\em Geom. Funct. Anal.}, 21:223--300, 2011.

\bibitem[DMS10]{DrutuMozesSapir}
C.~Dru{\c{t}}u, Shahar Mozes, and Mark Sapir.
\newblock Divergence in lattices in semisimple {L}ie groups and graphs of groups.
\newblock {\em Trans. Amer. Math. Soc.}, 362(5):2451--2505, 2010.

\bibitem[DMS18]{DrutuMozesSapir-corr}
C.~Dru{\c{t}}u, Shahar Mozes, and Mark Sapir.
\newblock Corrigendum to ``divergence in lattices in semisimple {L}ie groups and graphs of groups''.
\newblock {\em Trans. Amer. Math. Soc.}, 370(1):749--754, 2018.

\bibitem[DS05]{DrutuSapir:TreeGraded}
C.~Dru{\c{t}}u and M.~Sapir.
\newblock Tree-graded spaces and asymptotic cones of groups.
\newblock {\em Topology}, 44:959--1058, 2005.
\newblock {with an appendix by D. Osin and M. Sapir}.

\bibitem[DT15]{DurhamTaylor:convexcocompact}
Matthew~Gentry Durham and Samuel~J. Taylor.
\newblock Convex cocompactness and stability in mapping class groups.
\newblock {\em Algebr. Geom. Topol.}, 15(5):2839--2859, 2015.

\bibitem[Far06]{Farb:problems}
Benson Farb, editor.
\newblock {\em Problems on mapping class groups and related topics}, volume~74 of {\em Proceedings of Symposia in Pure Mathematics}.
\newblock American Mathematical Society, Providence, RI, 2006.

\bibitem[Fin17]{fink:morse}
Elisabeth Fink.
\newblock Morse geodesics in torsion groups, 2017.

\bibitem[FKSZ24]{fioravantikarrersistozbinde:cech}
Elia Fioravanti, Annette Karrer, Alessandro Sisto, and Stefanie Zbinden.
\newblock On the \v cech cohomology of {M}orse boundaries.
\newblock {\em Adv. Math.}, 443:Paper No. 109601, 53, 2024.

\bibitem[Git99]{Gitik(1999b)}
R.~Gitik.
\newblock Ping-pong on negatively curved groups.
\newblock {\em J. Algebra}, 217(1):65--72, 1999.

\bibitem[Gro83]{Gromov(1983)}
M.~Gromov.
\newblock Filling {R}iemannian manifolds.
\newblock {\em Journal of Differential Geometry}, 18:1--147, 1983.

\bibitem[Gro87a]{Gromov(1987)}
M.~Gromov.
\newblock Hyperbolic groups.
\newblock In {\em ``Essays in group theory''}, volume~8 of {\em Math. Sci. Res. Ins. Publ.}, pages 75--263. Springer, 1987.

\bibitem[Gro87b]{Gromov:hyperbolic}
M.~Gromov.
\newblock Hyperbolic groups.
\newblock In S.~Gersten, editor, {\em Essays in group theory}, volume~8 of {\em MSRI Publications}. Springer, 1987.

\bibitem[GS91]{GerstenShort:rational}
S.~Gersten and H.~Short.
\newblock Rational subgroups of biautomatic groups.
\newblock {\em Ann. of Math.}, 134:125--158, 1991.

\bibitem[HMS24]{HagenMartinSisto:extra-large}
Mark Hagen, Alexandre Martin, and Alessandro Sisto.
\newblock Extra-large type {A}rtin groups are hierarchically hyperbolic.
\newblock {\em Math. Ann.}, 388(1):867--938, 2024.

\bibitem[HNS24]{hughesnairnespriano:regularity}
Sam Hughes, Patrick~S. Nairne, and Davide Spriano.
\newblock Regularity of quasigeodesics characterises hyperbolicity.
\newblock {\em Accepted in P. Roy. Soc. Edinb. A}, 2024.

\bibitem[HSZ24]{HeSprianoZbinden:sigmacompactnes}
Vivian He, Davide Spriano, and Stefanie Zbinden.
\newblock Sigma-compactness of morse boundaries in morse local-to-global groups and applications to stationary measures, 2024.

\bibitem[IW24]{IslamWeisman:morseproperties}
Mitul Islam and Theodore Weisman.
\newblock Morse properties in convex projective geometry, 2024.

\bibitem[Min06]{Minasyan:separable}
Ashot Minasyan.
\newblock Separable subsets of {GFERF} negatively curved groups.
\newblock {\em J. Algebra}, 304(2):1090--1100, 2006.

\bibitem[MM99]{MasurMinsky:complex1}
H.~Masur and Y.~Minsky.
\newblock {Geometry of the complex of curves, I. Hyperbolicity}.
\newblock {\em Invent. Math.}, 138(1):103--149, 1999.

\bibitem[MM23]{minasyanmineh:quasiconvexity}
Ashot Minasyan and Lawk Mineh.
\newblock Quasiconvexity of virtual joins and separability of products in relatively hyperbolic groups, 2023.

\bibitem[MS23]{minehspriano:separability}
Lawk Mineh and Davide Spriano.
\newblock Separability in morse local-to-global groups, 2023.

\bibitem[OOS09]{OOS}
A.~Yu. Ol'shanskii, D.~V. Osin, and M.~V. Sapir.
\newblock Lacunary hyperbolic groups.
\newblock {\em Geom. Topol.}, 13(4):2051--2140, 2009.
\newblock With an appendix by M. Kapovich and B. Kleiner.

\bibitem[OsOS09]{OlshankiiOsinSapir:lacunary}
Alexander~Yu. Ol\cprime~shanskii, Denis~V. Osin, and Mark~V. Sapir.
\newblock Lacunary hyperbolic groups.
\newblock {\em Geom. Topol.}, 13(4):2051--2140, 2009.
\newblock With an appendix by Michael Kapovich and Bruce Kleiner.

\bibitem[PSZ24]{PetytSprianoZalloum:hyperbolic}
Harry Petyt, Davide Spriano, and Abdul Zalloum.
\newblock Hyperbolic models for {CAT}(0) spaces.
\newblock {\em Adv. Math.}, 450:Paper No. 109742, 66, 2024.

\bibitem[PZS24]{petytzalloumspriano:constructing}
Harry Petyt, Abdul Zalloum, and Davide Spriano.
\newblock Constructing metric spaces from systems of walls, 2024.

\bibitem[QR24]{qingrafi:quasi}
Yulan Qing and Kasra Rafi.
\newblock The quasi-redirecting boundary, 2024.

\bibitem[RST22]{RussellSprianoTran:thelocal}
Jacob Russell, Davide Spriano, and Hung~Cong Tran.
\newblock The local-to-global property for {M}orse quasi-geodesics.
\newblock {\em Math. Z.}, 300(2):1557--1602, 2022.

\bibitem[RZ93]{RibesZalesskii:profinite}
Luis Ribes and Pavel~A. Zalesskii.
\newblock On the profinite topology on a free group.
\newblock {\em Bull. London Math. Soc.}, 25(1):37--43, 1993.

\bibitem[She24]{shepherd:product}
Sam Shepherd.
\newblock Product separability for special cube complexes, 2024.

\bibitem[SZ23]{sistozalloum:morse}
Alessandro Sisto and Abdul Zalloum.
\newblock Morse subsets of injective spaces are strongly contracting, 2023.

\bibitem[Tra19]{Tran:onstrongly}
Hung~Cong Tran.
\newblock On strongly quasiconvex subgroups.
\newblock {\em Geom. Topol.}, 23(3):1173--1235, 2019.

\bibitem[Zbi22]{zbinden:morse}
Stefanie Zbinden.
\newblock Morse boundaries of 3-manifold groups, 2022.

\bibitem[Zbi23]{Zbinden:SmallCancellation}
Stefanie Zbinden.
\newblock Small cancellation groups with and without sigma-compact morse boundary.
\newblock {\em arXiv preprint arXiv:2307.13325}, 2023.

\bibitem[Zbi24]{Zbinden:hyperbolic}
Stefanie Zbinden.
\newblock Hyperbolic spaces that detect all strongly-contracting directions.
\newblock {\em arXiv preprint arXiv:2404.12162}, 2024.

\end{thebibliography}

\end{document}